\documentclass[11pt]{article}
\usepackage{amsmath}
\usepackage{amssymb}
\usepackage{latexsym}
\usepackage{theorem}
\usepackage{mathrsfs}
\usepackage[all]{xy}
\usepackage{orthog}
\usepackage{stmaryrd}
\usepackage[bookmarksnumbered]{hyperref}

\mathsurround=1pt

\setlength{\unitlength}{1cm}

\setlength{\textheight}{22cm}
 \setlength{\voffset}{-1cm}

\setlength{\textwidth}{14.5cm}
\setlength{\oddsidemargin}{7.5mm}
\setlength{\evensidemargin}{0mm}

\linespread{1}
\setlength{\parindent}{0cm}
\addtolength{\parskip}{2mm}

\begin{document}

\title{Model Categories for Orthogonal Calculus}

\author{David Barnes         \and
        Peter Oman 
}

%\authorrunning{Short form of author list} % if too long for running head

%\institute{David Barnes \at
%              School of Mathematics and Statistics, \\
%              Hicks Building, \\
%              Sheffield S3 7RH, UK \\
%               \email{D.J.Barnes@shef.ac.uk}           %  \\
%%             \emph{Present address:} of F. Author  %  if needed
%           \and
%           Peter Oman \at
%              Department of Mathematics, Statistics and Computer Science, \\
%              University of Wisconsin--Stout \\
%              Menomonie, WI 54751, USA \\
%              \email{omanp@uwstout.edu}
%}
%

\maketitle

\begin{abstract}
\noindent We restate the notion of orthogonal calculus in 
terms of model categories.  This provides a cleaner set of results 
and makes the role of $O(n)$--equivariance clearer. 
Thus we develop model structures for the category of $n$--polynomial
and $n$--homogeneous functors, along with Quillen pairs relating them. 
We then classify  $n$--homogeneous functors, via a zig-zag of 
Quillen equivalences,  in terms of 
spectra with an $O(n)$--action. 
This improves upon the classification theorem of Weiss.
As an application,  we develop a variant of orthogonal calculus by replacing topological spaces with orthogonal spectra.
\vskip0.4cm 
\noindent
Mathematics Subject Classification: 55P42,  55P91 and 55U35
\end{abstract}

\section{Introduction}

Orthogonal calculus is a beautiful tool for calculating homotopical properties
of a functor from the category of finite dimensional inner product spaces and linear 
isometries to the category of based spaces.  Interesting examples of such functors abound and include classical objects in algebraic and geometric topology: the classifying space of the orthogonal group, $F(V)=BO(V)$;  the classifying space of the group of homeomorphisms of $V$, $F(V)=B\text{Top}(V)$; and the space of  Euclidean embeddings for a fixed manifold $M$, $F(V)=\text{Emb}(M,V)$.  
We call the category of such functors and natural transformations between them 
$\ecal_0$. Orthogonal calculus is based on the notion of 
$n$--polynomial functors, which are well-behaved functors in $\ecal_0$. These functors 
satisfy an extrapolation condition, which allows one to identify the value at some 
vector space from the values at vector spaces of greater dimension, see section \ref{sec:npolyfun}. 
In geometric terms, orthogonal calculus approximates a functor (locally around $\rr^{\infty}$)  via polynomial functors and attempts to reconstruct the global functor from the associated ``infinitesimal" information.
More concretely, the calculus splits a functor $F$ in $\ecal_0$ into a 
tower of fibrations, the $n^{th}$--fibration of this tower consists of a map from the 
$n$--polynomial approximation of $F$ to the $(n-1)$--polynomial
approximation of $F$. The homotopy fibre of this map is then an $n$--homogeneous functor and is classified by an $O(n)$--spectrum up to homotopy. 

A question about an input functor can thus be translated into a question about the spectra and the fibration sequences they are part of, hopefully making the answer easier to find. Indeed, from a computational perspective, the orthogonal tower provides a spectral sequence 
the inputs of which have more structure and more constraints than the original functor.  
A particularly interesting example of how useful orthogonal calculus can be is \cite{ARV07}. 
In that paper, orthogonal calculus is used to prove that the rational homology type of certain spaces of embeddings of a manifold 
is determined by the rational homology type of the manifold.  In addition, the homogeneous approximations themselves 
typically correspond to spectra of intrinsic interest. For example, in \cite{weiss95}, Weiss determines that the 1-homogeneous approximation of the functor $BO(-)$ is the sphere spectrum and the 1-homogeneous approximation of $B\text{Top}(-)$ is Waldhausen's A-theory of a point.

In the construction of the orthogonal calculus, \cite{weiss95}, there are numerous times that one wishes to study a functor `up to homotopy' for some (usually non-standard) notion of homotopy. Furthermore, the classification of the $n^{th}$--fibre of the tower is given in terms of an equivalence of homotopy categories. 
We replace `up to homotopy' by some appropriate notion of weak equivalence within a model structure. This allows us to 
replace an equivalence of homotopy categories by the stronger and more structured notion of a Quillen equivalence: exhibiting an equivalence of `homotopy theories' rather than merely an equivalence of derived categories. 

In \cite{weiss95}, Weiss implicitly constructs a localisation of the standard projective model structure, capturing the homotopy theory of $n$--polynomial functors.  One would expect that such a localization can be realized in model theoretic language and indeed this is the case.
Via a homotopy-idempotent functor, we can left Bousfield localize to create a (right proper) model structure whose fibrant objects are 
the $n$--polynomial functors.  We call this model category $n \poly \ecal_0$.
However, subtleties arise when attempting to construct a model structure capturing the homotopy theory of $n$--homogeneous functors and the associated classification by $O(n)$-spectra.

A priori, the $n$--homogeneous structure is a right localisation of the $n$--polynomial structure. Since we not working stably, this requires the full Bousfield localisation machinery in the sense of Hirschhorn \cite{hir03}.
Indeed, in order to know that the $n$--polynomial model structure is well-behaved enough to admit a right localisation (specifically, that it is a cellular model category), we must make use of this general machinery to provide a second construction of  $n \poly \ecal_0$.

These two constructions show that $n \poly \ecal_0$ is right proper cellular model category. 
Thus we are able to perform a 
right Bousfield localisation to obtain a new model category $n \homog \ecal_0$
whose cofibrant--fibrant objects are precisely the $n$--homogeneous functors. 
The task is now to identify this complicated model structure as something simpler. 
With the work of Weiss, one would hope to prove a Quillen equivalence
between this model category and the category of $O(n)$--objects in spectra; however, we show $O(n)$--objects in spectra are not the most natural models for homogeneous functors. 

By carefully examining the derived equivalence in \cite{weiss95}, we construct
a non--standard model of the (free) $O(n)$--equivariant stable category which we call $O(n) \ecal_n$. 
This category captures the appropriate structure for differentiation, see section \ref{sec:thecat}. 
Indeed, we exhibit a left Quillen equivalence from $O(n) \ecal_n$ to the category of orthogonal spectra with an $O(n)$--action. 
We then show that homogeneous functors are classified, via differentiation as a right Quillen functor, by objects of $O(n) \ecal_n$.  Thus we recover the derived equivalence of Weiss via a zig-zag of Quillen equivalences and gain a much better understanding of the role of equivariance within the theory. 
In addition,  it may be of technical interest that this is an example of a twofold (left and right) localisation of a cellular model category where one has a nice description of the resulting homotopy theory; in general, one would expect such a thing to be quite unwieldy.

In more detail, the category $O(n) \ecal_n$ is best thought of as a variant of the theory of $O(n)$--equivariant orthogonal spectra, where a universe with trivial $O(n)$-action is used and the sphere spectrum is replaced by the $O(n)$--equivariant object which at $V$ takes value $S^{nV}$, the one-point compactification of $\rr^n \otimes V$. From such an object one can obtain an object of $\ecal_0$ by neglect of structure and taking orbits, we call this composite functor $\res_0^n/O(n)$ and we examine it in more detail in sections 
\ref{sec:diff} and \ref{sec:infind}. When $n >0$, we put a stable model structure on $O(n) \ecal_n$ that is a variation of the usual model structure on spectra, adjusted to account for our non-standard `sphere spectrum', see 
section \ref{sec:nstab}.

The functor $V \mapsto S^{nV}$ was already known to occur in orthogonal calculus.
From a spectrum with $O(n)$--action $X$, one can construct 
an $n$--homogeneous functor by the following rule
\[
V \longmapsto EO(n)_+ \smashprod_{O(n)} [\Omega^\infty (X \smashprod S^{nV})]
\]
where the term $X \smashprod S^{nV}$ is equipped with the diagonal action. 
We are able to replace this construction with the pair of Quillen equivalences as below, 
see sections \ref{sec:equiv} and \ref{sec:tower}.
The first category is the model category of $n$--homogeneous functors from 
vector spaces and linear isometries to based spaces, 
the second our new category and the third the model category 
of $O(n)$--objects in spectra. 
$$\xymatrix@C+1cm{
n \homog \ecal_0
\ar@<-1 ex>[r]_(0.6){\ind_0^n \varepsilon^*}
&
O(n)\ecal_n^{\pi}
\ar@<-1ex>[l]_(0.4){\res_0^n/O(n)}
\ar@<+1ex>[r]^{(-)\smashprod_{\jcal_n}\jcal_1}
&
O(n)\ical \scal
\ar@<+1ex>[l]^{\alpha_n^*}
}
$$
The functor $\alpha_n^*$ takes a spectrum $X$ to the object of $O(n) \ecal_n$
which at $V$ takes value $X(nV)$ with $O(n)$--action given by first applying the 
$O(n)$--action of $X$ and then the $O(n)$--action induced by that on $nV = \rr^n \otimes V$. 

These Quillen equivalences give a much cleaner statement than the homotopy classification of \cite[Section 7]{weiss95}, which was greatly hampered by 
the need to work exclusively with $\Omega$--spectra. They also make the role of the functor $V \mapsto S^{nV}$ clearer. Indeed, much of the difficulty of this work was to establish the correct categories and functors for the above.

The sets of maps at which we left localise and the set of objects at which we right localise are intrinsically bound into the definition of $O(n) \ecal_n$ and the notion of differentiation (which is embodied by the functor $\ind_0^n \varepsilon^*$). We find these intricate relations quite gratifying to see and perhaps illuminate what kind of definitions one would need to create a new calculus. 

Equally, it would be interesting to see how much of this work can be replicated for the other notions of calculus that use homotopy theory. 
For example, the Goodwillie calculus of functors \cite{gw90,gw91,gw03} was one of the inspirations for orthogonal calculus. It has long been known that they are strongly related, despite their very different inputs, thus it should not be surprising that 
\cite{bcr} follows a similar pattern to our work. 

We conclude this paper with an application, because of the way we have used model categories to develop the theory, we immediately obtain a stable variant of orthogonal calculus by replacing topological spaces with orthogonal spectra, see section \ref{sec:stablecase}. 

Finally, we believe that other extensions and alterations of orthogonal calculus will be much easier to create now that we have a good model category foundation. For example, one could study functors into some localisation of spaces or spectra. Alternatively, one 
could repeat this work for the unitary calculus, where $O(n)$ is replaced by $U(n)$. Indeed, this project began as an attempt to perform an equivariant version of orthogonal calculus. It is now clear that any such attempt will need to have clear and precise categorical constructions, along with well-behaved model categories.

\subsubsection*{Organisation}
We begin in section \ref{sec:groupstuff} with a brief introduction to spaces with a group action
and enriched functors. This provides the language we need 
section \ref{sec:thecat} to define the categories $O(n) \ecal_n$. 
We show how differentiation relates these categories in section \ref{sec:diff}.

The notions of $n$--polynomial and $n$--homogeneous functors are introduced in section \ref
{sec:npolyfun}. The next task, completed in section \ref{sec:npolymodel}, 
is to find a model category in which the fibrant objects are $n$--polynomial functors 
and another where the cofibrant--fibrant objects and the $n$--homogeneous functors. 

Staying with model structures, in section \ref{sec:nstab} we produce an $n$--stable model 
structure on $O(n) \ecal_n$. This will be an intermediary model structure sitting between 
the $n$--homogeneous model structure on $\ecal_0$ and 
the category of $O(n)$--equivariant objects in the category of orthogonal spectra. 
We prove that $O(n) \ecal_n$ with the $n$--stable model structure is Quillen equivalent to 
the model category of $O(n)$--objects in orthogonal spectra in section \ref{sec:equiv}.

In section \ref{sec:infind} we prove that 
the differentiation functor is a right Quillen functor from the $n$--polynomial model category to $O(n) \ecal_n$ equipped with the stable model structure.
We use this to show that $O(n) \ecal_n$ is in fact Quillen equivalent to the $n$--homogeneous model category in section \ref{sec:tower}. As a consequence, we recover the statement that the tower has the desired form and that the homotopy type of the $n^{th}$--fibre is determined by 
a spectrum with an $O(n)$--action. 
We conclude the paper by developing a stable variant of orthogonal calculus in section \ref{sec:stablecase}.

We have tried to make this paper largely self--contained and hence we have reproduced a fair amount of Weiss's work, 
often with improvements in the proofs or descriptions due to our use of model categories. 
There are some areas that we have not been able to improve upon, most notably the homotopy colimit calculation of 
\cite[Theorem 4.1]{weiss95}, the properties of the functor $T_n$, from Theorem 6.3 and the errata, 
and the calculations of Examples 5.7 and 6.4. We find this to be acceptable, 
as the aim of this paper is not to replace \cite{weiss95} but to 
put it into the modern language of model categories.

\section{Group actions and enriched functors}\label{sec:groupstuff}
Since equivariance is vital to our approach, we briefly introduce 
discuss spaces with a group action and functors enriched over spaces with a group action.
The following is a summary of \cite[Section III.1]{mm02}.

In this section $G$ will be a compact Lie group. A \textbf{based $G$--space} $X$
is a topological space with a continuous action of $G$  
on the space $X$. We require that this action be associative 
and unital and that the basepoint of $X$ be fixed by the action of $G$.
A continuous map $f \co X \to Y$ between two $G$--spaces is 
said to be an \textbf{equivariant map}
if $f(gx) = gf(x)$ for any $g \in G$ and $x \in X$. 
We write $G \Top$ for the category of based $G$--spaces 
and equivariant maps

The category $G \Top$ is a closed symmetric monoidal
category, whose \textbf{monoidal product} is given by the smash product of based spaces 
equipped with the diagonal action of $G$. We note that
whenever a smash product of two $G$--spaces appears
in \cite{weiss95}, it is also given the diagonal action.
The corresponding \textbf{internal function object} is
the space of non--equivariant maps, which has a $G$--action 
defined by conjugation. For $G$--spaces $X$ and $Y$, 
we denote this space by $\Top(X,Y)$
and we see that $g \ast f = gfg^{-1}$ for $f \in \Top(X,Y)$
and $g \in G$. 
In  particular, $\Top(X,Y)^G$ is precisely the space
of equivariant maps from $X$ to $Y$. 

This category has a cofibrantly generated proper model structure
where the fibrations and weak equivalences are those equivariant maps 
$f \co X \to Y$ 
whose underlying map of spaces 
$i^* f \co i^*X \to i^*Y $ 
is a fibration or weak homotopy equivalence of spaces. 
The generating cofibrations of this model structure are
given by the standard boundary inclusions:
$$(G \times S^{n-1})_+ \to  (G \times D^{n})_+$$
for $n \geqslant 0$, with the sphere and disc
both given the trivial $G$--action. 
The generating acyclic cofibrations are given by the 
maps 
$$(G \times D^{n})_+ \to  (G \times D^{n} \times [0,1])_+$$
where $(g,x) \mapsto (g,x,0)$ and $n \geqslant 0$. 
Using \cite[Lemma IV.6.6]{mm02} it is easy to check that 
these definitions give
us a symmetric monoidal model structure on the category of $G$--spaces.
Hence the smash product and internal function object have derived functors
on the homotopy category. 

We will also need the language of functors enriched over $G$-spaces, 
so we give an introduction here.  
We take our definitions from 
\cite[Section II.1]{mm02}.

Following the usual convention we call
a space--enriched functor from a topological category $\dcal$ 
to spaces a \textbf{$\dcal$--space}.
A \textbf{map of $\dcal$--spaces} $f \co E \to F$ is then a 
collection of continuous maps $f(d) \co E(d) \to F(d)$
such that for any element $\alpha \in \dcal(d,e)$
we have a commutative square
\[
\xymatrix@C+1cm{
E(d) \ar[r]^{E(\alpha)} \ar[d]^{f(d)} &
E(e) \ar[d]^{f(e)} \\
F(d) \ar[r]^{F(\alpha)} &
F(e)
}
\]

If the category $\dcal$ is enriched over $G$--spaces, then we can also consider 
continuous functors $E$ from $\dcal$ to $G$--spaces such 
that the map
\[
E_{d,e} \co \dcal(d,e) \longrightarrow \Top ( E(d) , E(e) )
\]
is $G$--equivariant. 
We call such a functor a 
\textbf{$G$--equivariant $\dcal$--space}. 
Such functors are precisely the $G \Top$--enriched functors from $\dcal$ to $G \Top$. 
We then define a 
\textbf{map of $G$--equivariant $\dcal$--spaces}, $f \co E \to F$, to be a 
collection of equivariant maps $f(d) \co E(d) \to F(d)$
such that for any element $\alpha \in \dcal(d,e)$
we have a commutative square as before. It is important to 
note that we ask for this diagram to commute 
for any $\alpha$, even though
$E(\alpha)$ or $F(\alpha)$ are not necessarily equivariant maps. 
The category of $G$--equivariant $\dcal$--spaces and maps of
$G$--equivariant $\dcal$--spaces will be denoted $G \dcal \Top$.

Our final piece of business in this section is to 
note that the category of $G$--equivariant $\dcal$--spaces
is itself enriched over $G$--spaces. 
We will use this in section \ref{sec:diff} to define differentiation. 
To describe this enrichment we need to use the notion of enriched ends, 
details can be found in \cite{bor94}. 
\begin{definition}
For $E$ and $F$ in $G \dcal \Top$, define the 
\textbf{$G$--equivariant space of maps} from $E$ to $F$ to be the following 
enriched end (which is constructed in the category $G \Top$). 
$$
\nat_{G \dcal \Top} (E,F) = \int_{d \in \dcal} \Top(E(d), F(d))
$$
\end{definition}
It is routine to show that $\nat_{G \dcal \Top} (E,F)^{G}$
is the space of maps of $G$--equivariant $\dcal$--spaces.

For model category purposes and constructions 
it is helpful to also have a tensor
and cotensor. 
For $E \in G \dcal \Top$
and $A$ a topological space with $G$--action, 
there is another object 
$A \otimes E \in G \dcal \Top$, which at $U$ takes
value $A \smashprod E(U)$. The structure maps
of this new object are given by 
\[
\dcal(d,e) \overset{E_{d,e}}{\longrightarrow}
 \Top(E(d), E(e))  \overset{A \smashprod -}{\longrightarrow}
 \Top(A \smashprod E(d),A \smashprod  E(e)).
\]
There is also a cotensor product, 
$\hom(A,E)$ which at $U$ takes value 
$\Top(A,E(U))$. The structure map for this object is
given below. 
\[
\dcal(d,e) \overset{E_{d,e}}{\longrightarrow}
 \Top(E(d), E(e))  \overset{\Top(A, -)}{\longrightarrow}
 \Top(\Top(A,E(d)),\Top(A,E(e))).
\]

Let $G\dcal \Top(E,F)$ denote the set of 
maps in the category $G \dcal \Top$ between 
two objects of that category. 
Similarly, let $G \Top(A,B)$ denote the set of 
maps in the category $G \Top$ between two objects $A$ and $B$.
Then we can relate the enrichment, tensor and cotensor
by the natural isomorphisms \[
G\dcal \Top(E, \hom(A,F) ) 
\cong 
G\dcal \Top(A \otimes E, F) 
\cong 
G \Top(A, \nat_{G \dcal \Top} (E,F) )
\] 
Thus we see that $G \dcal \Top$ is a closed
module over $G \Top$, in the sense 
of \cite[Section 4.1]{hov99}. 
One can repeat these constructions with the category of 
$\dcal$--spaces and maps of $\dcal$--spaces to see that it is 
a closed module over $\Top$. 

For a fixed $G$--topological category $\dcal$ 
we would like an adjunction comparing
$G \dcal \Top$ and $G \Top$. 
It is clear that any $G$--equivariant $\dcal$--space gives a 
$\Top$--enriched functor from $\dcal$ to based 
topological spaces by forgetting the $G$--actions. 
Hence there is a forgetful
functor from $G \dcal \Top$ to $\dcal \Top$.
We call this functor $i^*$ just as 
the forgetful functor from $G \Top$ to $\Top$
is called $i^*$. 

There is a left adjoint to $i^*$, 
which we write as $G_+ \smashprod -$. 
Let $E \in \dcal \Top$, then at $d$
we define 
\[
(G_+ \smashprod E)(d) = G_+ \smashprod E(d)
\]
The structure map is then defined as follows
\[
(G_+ \smashprod E(d) ) \smashprod \dcal(d,e)
\to
G_+ \smashprod (E(d)  \smashprod i^* \dcal(d,e))
\to 
G_+ \smashprod E(e)
\]
where the first map is an isomorphism given by 
$(g, x,y) \mapsto (g, x, g^{-1} y)$
for $g \in G$, $x \in E(d)$ and 
$y \in \dcal(d,e)$. The second map is then the action map
of $E$. It is routine to check that we have an adjoint pair as claimed.

\section{The category \texorpdfstring{$O(n)\ecal_n$}{O(n)E\_n}}\label{sec:thecat}

Our primary objects of study are continuous functors from
the category of finite dimensional real inner product spaces 
and linear isometries to based spaces. We call this category $\ecal_0$. 
These functors are the input to orthogonal calculus, the output is a 
tower of fibrations, whose fibres are continuous functors like the input 
but have more structure. The main theorem of orthogonal calculus is that these
fibres can be classified in terms of spectra with an $O(n)$--action. 
On the way to this classification we need an intermediate category
$O(n) \ecal_n$. 
In this section we introduce the categories $\ecal_0$ and 
$O(n) \ecal_n$. Since they are both defined in terms
of enriched functors, we will need to construct
a collection of enriched categories $\jcal_n$ for $n \geqslant 0$. 

\begin{definition}
Let $\ucal$ be a real inner product space with countably infinite dimension. 
For $V$ and $W$ finite dimensional inner product subspaces of $\ucal$,
we define $\mor(V,W)$ to be
the Stiefel manifold of linear isometries from $V$ to $W$.

We then define $\jcal_0$ to be the topological category with objects the class of finite dimensional real inner product subspaces of $\ucal$.
The morphism space of maps from $V$ to $W$, written $\mor_0(V,W)$, 
is defined to be $\mor(V,W)_+$
(the space $\mor(V,W)$ with a disjoint basepoint added). 

We define $\ical$ to be the topological category with same objects as $\jcal_0$ but with morphisms given by the space of
linear isometric isomorphisms from $V$ to $W$ (with a disjoint basepoint added).
We write $\ical(V,W)_+$ for this space. 
\end{definition}

Our input functors can then be described as $\jcal_0$--spaces. 
Such an object $F$ consists of a collection of spaces $F(V)$, one for 
each finite dimensional real inner product subspace, along with 
continuous maps 
\[
F(V) \smashprod \mor_0(V,W) \to F(W)
\]
that satisfy an evident associativity condition 
and a unit condition when $V=W$. 

Recall that the fibres of the tower will be classified 
using the category $O(n) \ecal_n$. To construct this category we must
define an enriched category $\jcal_n$, which we build out of some 
$O(n)$--equivariant vector bundles. The category $\jcal_n$ is constructed in 
\cite[Section 1]{weiss95} and is given an $O(n)$--action in 
\cite[Section 3]{weiss95}.

For each pair $V$ and $W$ we define an $O(n)$--equivariant vector 
bundle $\gamma_n(V,W)$ on the space 
$\mor(V,W)$. The total space of this vector bundle is given by 
\[
\left\{ (f,x) \ | \ f \in \mor(V,W) \ \text{ and }
x \in \rr^n \otimes (W - f(V))
\right\}
\]
where $W-f(V)$ denotes the orthogonal
complement of the image of $f$.  
The group $O(n)$ acts on $\rr^n$ 
via linear isometries and we extend this action to 
$\rr^n \otimes (W - f(V))$ by letting $\sigma \in O(n)$ act by 
$\sigma \otimes \id$. Now we let $O(n)$ act on 
$\gamma_n(V,W)$ by the rule $\sigma(f,x) = (f, \sigma \otimes \id (x))$.

\begin{definition}
For each $n \geqslant 0$, the \textbf{$n^{th}$--jet category} $\jcal_n$ is an $O(n)$--topological category with the same class of objects as $\jcal_0$. The space of morphisms from $V$ to $W$ is the Thom space of 
$\gamma_n(V,W)$ and is denoted $\mor_n(V,W)$. 
Note that the point at infinity is fixed under the $O(n)$-action.
We also see that when $n=0$ this definition agrees with our earlier description
of $\jcal_0$.

The composition rule for $\mor_n$ is induced by the map of spaces
\[
\gamma_n(V,W) \times \gamma_n(U,V) \longrightarrow \gamma_n(U,W)
\]
given by the formula $((f,x),(g,y) \mapsto (fg, x + f_*(y))$, where $f \co V \to W$, $g \co U \to V$, $f_*=\id \otimes f$, $x \in \rr^n \otimes (W-f(V))$ and $y \in \rr^n \otimes (V-g(U))$.
\end{definition}

\begin{definition}
We define $\ecal_n$ to be $\jcal_n \Top$, the category of
$\jcal_n$--spaces and maps of $\jcal_n$-spaces. 

We define $O(n)\ecal_n$ to be $O(n) \jcal_n \Top$,
the category of
$O(n)$--equivariant $\jcal_n$--spaces
and maps of 
$O(n)$--equivariant $\jcal_n$--spaces.
\end{definition}

When $n=0$, $O(n)\ecal_n$ is the same as $\ecal_0$, our category of input functors. 

We will later use the categories $O(n) \ecal_n$ to 
classify the fibres of the orthogonal tower. 
For this result it is essential that we use the
$O(n)$--topological enrichment. 
In \cite{weiss95}, only $\ecal_n$ is used, 
but using the adjunction of the previous section
we see that any object of $O(n) \ecal_n$
defines an object of $\ecal_n$ by forgetting the 
$O(n)$-actions.

\section{Differentiation}\label{sec:diff}

Differentiation is a method of taking a functor in $\ecal_0$ 
and making a functor in $O(n) \ecal_n$. 
This process is central to orthogonal calculus
because the $n^{th}$--fibre of the tower for a functor $E \in \ecal_0$
will be determined by the $n^{th}$--derivative of $E$ (up to homotopy). 
We will obtain an adjoint pair 
between $\ecal_0$ and $O(n) \ecal_n$. 
The (derived) counit of this adjunction will then describe
how to include the $n^{th}$--fibre of the tower into the 
$n^{th}$--term. 

For the sake of completeness, we consider a more general version of this adjunction. We define differentiation as a functor
$O(m)\ecal_m \to O(n)\ecal_n$ for $m \leqslant n$. 

Let $i_m^n \co \rr^m \to \rr^n$ be the map $x \mapsto (x,0)$, 
where $m \leqslant n$.
This map induces a group homomorphism $O(m) \to O(n)$, where $O(m)$ acts on the first $m$ coordinates and leaves the rest unchanged. This makes $i_m^n \co \rr^m \to \rr^n$ a map of $O(m)$--equivariant objects. We can also use $i_m^n$ to induce a functor of $O(m)$--topological categories $\jcal_m \to \jcal_n$. To do so, we apply the Thom space construction to the map of $O(m)$--equivariant spaces:
\[
\begin{array}{rcl}
(i_m^n)_{U,V} \co \gamma_m(U,V) &\longrightarrow & \gamma_n(U,V) \\
(f,x) & \longmapsto & (f, i_m^n \otimes \id (x))
\end{array}
\] 
We thus have a series of maps of enriched categories. 
\[
\jcal_0 \overset{i_0^1}{\longrightarrow} 
\jcal_1 \overset{i_1^2}{\longrightarrow} 
\jcal_2 \overset{i_2^3}{\longrightarrow} 
\dots     \overset{i_{n-1}^n}{\longrightarrow} 
\jcal_n \overset{i_n^{n+1}}{\longrightarrow} \dots 
\]
We now study how these maps induce functors between the categories
$\ecal_n$ for varying $n$. By adding in change of groups functors, 
we also achieve adjoint pairs between the categories 
$O(n) \ecal_n$ for varying $n$.

\begin{definition}
Define the \textbf{restriction functor} $res_m^n \co \ecal_n \to \ecal_m$ as precomposition with $i_m^n \co \jcal_m \to \jcal_n$, where $m \leqslant n$.

Similarly define the \textbf{restriction--orbit functor} $res_m^n/O(n-m) \co O(n)\ecal_n \to O(m)\ecal_m$ as the functor which sends $X$ to $(X \circ i_m^n )/O(n-m)$ an $O(m)$--topological functor from $\jcal_m$ to based $O(m)$--spaces.
\end{definition}

When discussing the composition of restriction and evaluation, we sometimes omit notation for restriction.
On a vector space $V$, $(X \circ i_m^n )/O(n-m)(V) = X(V)/O(n-m)$, which is
an $O(m)$--space. These restriction functors both have right adjoints. The first step is to identify the right adjoint of the orbit functor.

\begin{lemma}
There is an adjoint pair
$$(-)/O(n-m) : O(n) \Top
\overrightarrow{\longleftarrow}
O(m) \Top : \CI_m^n $$
\end{lemma}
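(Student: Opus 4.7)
The plan is to realize $\CI_m^n$ as a composite of two standard right adjoints by factoring the orbit functor as
\[
O(n)\Top \xrightarrow{\ \mathrm{res}\ } (O(m) \times O(n-m))\Top \xrightarrow{\ (-)/O(n-m)\ } O(m)\Top.
\]
Here $O(m) \times O(n-m)$ is embedded block-diagonally in $O(n)$; the first arrow is restriction of the group action, while the second takes $O(n-m)$-orbits and retains the commuting $O(m)$-action. The restriction functor has right adjoint the coinduction $Z \mapsto \Top(O(n)_+, Z)^{O(m)\times O(n-m)}$, where $(O(m)\times O(n-m))$ acts on $O(n)_+$ by right multiplication and $O(n)$ acts on the coinduction by left translation of $O(n)_+$. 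Taking $O(n-m)$-orbits has right adjoint the functor that regards an $O(m)$-space $Y$ as an $(O(m)\times O(n-m))$-space by equipping it with the trivial $O(n-m)$-action. Composing these yields
\[
\CI_m^n(Y) \;=\; \Top(O(n)_+, Y)^{O(m)\times O(n-m)},
\]
an $O(n)$-space via left translation on $O(n)_+$, with $Y$ carrying the trivial $O(n-m)$-action.

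To verify the adjunction, I would establish the natural bijection
\[
O(m)\Top\bigl(X/O(n-m),\, Y\bigr) \;\cong\; O(n)\Top\bigl(X,\, \CI_m^n Y\bigr)
\]
directly. An element of the left-hand side is an $O(m)$-equivariant, $O(n-m)$-invariant map $X \to Y$; because $Y$ carries the trivial $O(n-m)$-action, such a map is the same data as an $(O(m)\times O(n-m))$-equivariant map $X \to Y$. By the standard restriction/coinduction adjunction for the subgroup inclusion $O(m)\times O(n-m) \subset O(n)$, this is in turn the same as an $O(n)$-equivariant map $X \to \CI_m^n Y$. Both translations are manifestly natural in $X$ and $Y$.

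The main difficulty is bookkeeping rather than substance: one must keep straight the two distinct roles of $O(n)_+$ in the coinduction formula (the right $(O(m)\times O(n-m))$-action that defines equivariance of the mapping space, versus the left $O(n)$-action that supplies the ambient action on $\CI_m^n Y$) and verify that basepoints are preserved throughout. Continuity of the bijection and its inverse is then a consequence of the closed module structure over $G\Top$ recalled in section~\ref{sec:groupstuff}.
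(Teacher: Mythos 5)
Your proposal is correct and matches the paper's approach: the paper likewise defines $\CI_m^n$ as the composite of inflation (giving an $O(m)$--space the trivial $O(n-m)$--action) followed by coinduction, $\CI_m^n A = F_{O(m)\times O(n-m)}(O(n)_+,\varepsilon^* A)$, mirroring the factorisation of the orbit functor through restriction to $O(m)\times O(n-m)$ and $O(n-m)$--orbits. Your explicit verification of the bijection is just a spelled-out version of the same standard restriction/coinduction and orbits/inflation adjunctions the paper invokes.
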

The right adjoint is defined as the composite of two functors. The first takes an $O(m)$--space $A$ and considers it as an $O(m) \times O(n-m)$--space by letting the $O(n-m)$--factor act trivially, this is called $\varepsilon^* A$. The second functor takes $\varepsilon^* A$ and sends it to the topological space of $O(m) \times O(n-m)$--maps from $O(n)$ to $\varepsilon^* A$.
We can therefore write 
\[
\CI_m^n A = F_{O(m) \times O(n-m)}(O(n)_+, \varepsilon^* A)
\]

\begin{lemma}
There is a right adjoint to $res_m^n$,
called \textbf{induction}, defined as
$$(\ind_m^n X)(V) = \nat_{\ecal_m} (\mor_n(V,-), X)$$
where the right hand side is the topological space of maps between two objects of $\ecal_m$.

There is a right adjoint to $res_m^n/O(n-m)$,
called \textbf{inflation--induction}, which we write as $\ind_m^n \CI X$,
it is defined as
$$(\ind_m^n \CI X)(V) = \nat_{O(m)\ecal_m} (\mor_n(V,-), \CI_m^n X)$$
\end{lemma}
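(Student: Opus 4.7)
My plan is to identify $\ind_m^n$ as the enriched right Kan extension along $i_m^n \co \jcal_m \to \jcal_n$, and then to bootstrap the second adjunction by composing with the orbit/coinduction adjunction of the preceding lemma.

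For the first statement I use the enriched density formula $E \cong \int^{V \in \jcal_n} E(V) \smashprod \mor_n(V,-)$, valid for any $E \in \ecal_n$. Given also $X \in \ecal_m$, this yields
\[
\ecal_m(\res_m^n E, X)
= \int_U \Top\bigl(E(i_m^n U), X(U)\bigr)
\cong \int_U \int_V \Top\bigl(E(V) \smashprod \mor_n(V, i_m^n U), X(U)\bigr),
\]
where the first equality is the definition of $\nat$ in $\ecal_m$ and the isomorphism converts the coend $E(i_m^n U) = \int^V E(V) \smashprod \mor_n(V, i_m^n U)$ into an end outside $\Top(-, X(U))$. A Fubini swap of the two ends, together with the tensor--hom adjunction in $\Top$, rewrites this as
\[
\int_V \Top\Bigl(E(V),\, \int_U \Top\bigl(\mor_n(V, i_m^n U), X(U)\bigr)\Bigr)
= \ecal_n(E, \ind_m^n X),
\]
since the inner end is exactly $\nat_{\ecal_m}(\mor_n(V,-), X) = (\ind_m^n X)(V)$. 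The $\jcal_n$-functoriality of $\ind_m^n X$ in the variable $V$ is inherited from the contravariant functoriality of $\mor_n(-, W)$ on $\jcal_n$.

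For the second statement I factor $\res_m^n/O(n-m)$ as $\res_m^n$ (precomposition with the $O(m) \times O(n-m)$-equivariant functor $i_m^n$, applied levelwise) followed by objectwise $O(n-m)$-orbits. The preceding lemma supplies $\CI_m^n$ as a levelwise right adjoint to the orbit functor, and running the Yoneda/end computation above inside the closed $O(n)\Top$-module $O(n)\ecal_m$ --- whose structure was set up in section \ref{sec:groupstuff} --- produces $\ind_m^n$ as right adjoint to equivariant restriction. Composing the two adjunctions yields the formula for $\ind_m^n \CI X$ as claimed, with the enriched end inheriting the full $O(n)$-action from the ambient equivariance of $\mor_n(V,-)$ and of $\CI_m^n X$.

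The main obstacle is bookkeeping: at each step of the end calculus one must check that the natural isomorphisms respect all the relevant group actions, so that the composite hom-isomorphism takes place in $O(n)\Top$ rather than merely in $\Top$. This reduces to the closed $G\Top$-module structure on $G\jcal\Top$ from section \ref{sec:groupstuff}, together with the observation that $i_m^n$ is a map of $O(m)$-topological categories whose morphism spaces embed inside $\mor_n$ as $O(n-m)$-fixed subspaces; once these are in hand the remaining verifications are standard enriched category theory.
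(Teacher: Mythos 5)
Your argument is correct and is exactly the standard justification the paper leaves implicit: the lemma is stated without proof, being the enriched right Kan extension along $i_m^n$ (co-Yoneda, Fubini for ends, tensor--hom), composed in the equivariant case with the orbit/coinduction adjunction supplying $\CI_m^n$. Your closing observation that $\mor_m(U,U')$ sits inside $\mor_n(U,U')$ as $O(n-m)$--fixed points is indeed the crux that makes $\res_m^n/O(n-m)$ well defined and gives the end its $O(n)$--action (this is the content of Weiss's Proposition 3.1), so the remaining equivariance bookkeeping you defer to standard enriched category theory does go through.
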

When $m=0$ we usually replace $\CI_0^n$ with $\varepsilon^*$, as here $\CI_0^n$ is simply equipping $X$ with the trivial $O(n)$--action.

Now we may define differentiation. The motivation for this
definition comes from two lemmas. 
Firstly, Lemma \ref{lem:stabletoinduction}, which can be thought 
of as describing differentiation
as a measure of a `rate of change'. 
Secondly, Lemma \ref{lem:inductionandtau}, which can be thought 
of as describing differentiation as a measure of how far 
a functor is from being `polynomial'.

\begin{definition}
Let $E \in \ecal_0$, then the \textbf{$n^{th}$--derivative} of $E$ is
$\ind_0^n \varepsilon^* E \in O(n) \ecal_n$.
\end{definition}
For $F \in \ecal_n$ we also talk of $\ind_n^{n+1} F$ 
as being the derivative of $F$. 

Note that $i^* \ind_0^n \varepsilon^* E \in \ecal_n$
is equal to $\ind_0^n E$, so $\ind_0^n E(V)$ 
has an $O(n)$--action for each $V$.
Indeed, \cite[Proposition 3.1]{weiss95} 
uses this fact to say (using our new language) that 
$\ind_0^n E$ can be thought of as an object of  $O(n) \ecal_n$.
That object is precisely $\ind_0^n \varepsilon^* E$. 

We are most interested in the pair $(\res_0^n/O(n), \ind_0^n \varepsilon^*)$,
though we will need the non-equivariant functor
$\ind_m^n$ for some calculations. We will sometimes omit $\res_0^n$ from our notation, provided that no confusion can occur. 

We can give another relation between induction and 
the categories $\jcal_n$ for varying $n$. 
The following is \cite[Proposition 1.2]{weiss95}, which shows how one can construct
$\jcal_{n+1}$ from $\jcal_n$.
\begin{proposition}\label{prop:jntojn}
For all $V$ and $W$ in $\jcal_0$
and all $n \geqslant 0$ there is a
natural homotopy cofibre sequence
$$
\mor_n(\rr \oplus V, W) \smashprod S^{n}
\to
\mor_n (V,W)
\to
\mor_{n+1} (V,W)
$$
\end{proposition}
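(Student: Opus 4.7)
The plan is to obtain the cofibre sequence by applying the Thom space construction to a cofibre sequence of bundles over $B := \mor(V,W)$. Let $\xi$ be the vector bundle over $B$ whose fibre over $f \co V \to W$ is the orthogonal complement $W - f(V)$. Directly from the fibrewise descriptions,
\[
\gamma_n(V,W) \cong \xi^{\oplus n}, \qquad
\gamma_{n+1}(V,W) \cong \gamma_n(V,W) \oplus \xi .
\]

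Next, I would take the universal Thom--cofibre sequence associated to $\xi$,
\[
S\xi_+ \longrightarrow D\xi_+ \simeq B_+ \longrightarrow \mathrm{Th}(\xi),
\]
in which the first map is induced by the sphere bundle inclusion and the second is the Thom collapse, and fibrewise-smash it over $B$ with the Thom space $\mor_n(V,W) = \mathrm{Th}(\gamma_n(V,W))$. Using the standard identities $X_+ \smashprod_B \mathrm{Th}(E) \cong \mathrm{Th}(p^*E)$ (for $p \co X \to B$) and $\mathrm{Th}(E) \smashprod_B \mathrm{Th}(F) \cong \mathrm{Th}(E \oplus F)$, this yields a cofibre sequence
\[
\mathrm{Th}(p^*\gamma_n(V,W)) \longrightarrow \mor_n(V,W) \longrightarrow \mor_{n+1}(V,W),
\]
where $p \co S\xi \to B$ is the sphere-bundle projection. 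The middle map is visibly the one induced by the bundle inclusion $\gamma_n(V,W) \hookrightarrow \gamma_{n+1}(V,W)$ coming from $i_n^{n+1} \co \rr^n \hookrightarrow \rr^{n+1}$.

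It remains to identify the left term with $\mor_n(\rr \oplus V, W) \smashprod S^n$. For this I would use the homeomorphism $S\xi \cong \mor(\rr \oplus V, W)$ that sends a unit vector $w$ in the fibre of $\xi$ over $f$ to the unique isometry $g$ with $g|_V = f$ and $g(1) = w$. Under this identification, the fibre of $p^*\gamma_n(V,W)$ over $g = (f,w)$ is $\rr^n \otimes (W - f(V))$, which splits orthogonally as
\[
\rr^n \otimes \bigl((W - f(V)) \ominus \rr w\bigr) \;\oplus\; \rr^n \otimes \rr w \;\cong\; \gamma_n(\rr \oplus V, W)|_g \,\oplus\, \rr^n,
\]
with the second summand the trivial rank $n$ bundle. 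Taking Thom spaces gives
\[
\mathrm{Th}(p^*\gamma_n(V,W)) \;\cong\; S^n \smashprod \mor_n(\rr \oplus V, W),
\]
as required.

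The main obstacle is bookkeeping: verifying that fibrewise smash with $\mor_n(V,W)$ genuinely preserves the cofibre sequence of $\xi$ (which is standard since Thom spaces of vector bundles are fibrewise well-pointed), and checking that the orthogonal splitting above is natural in $V$ and $W$. Both are routine consequences of the canonical nature of the constructions involved, and no new input should be required.
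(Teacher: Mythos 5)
Your argument is correct, and it reaches the stated sequence by a genuinely different route from the paper's. The paper identifies $S^n$ with the closure of the subset of points $(i,x)$ in $\mor_n(V,\rr\oplus V)$, takes the first map to be the restriction of the composition $\mor_n(\rr\oplus V,W)\smashprod\mor_n(V,\rr\oplus V)\to\mor_n(V,W)$, and then verifies the claim by writing down an explicit homeomorphism from the mapping cone (a quotient of $[0,\infty]\times\gamma_n(\rr\oplus V,W)\times\rr^n$) onto $\mor_{n+1}(V,W)$. You instead deduce everything from the classical cofibre sequence $S\xi_+\to B_+\to\mathrm{Th}(\xi)$ of the complement bundle $\xi$ over $B=\mor(V,W)$, smashed fibrewise with $\gamma_n(V,W)$, using $\gamma_{n+1}(V,W)\cong\gamma_n(V,W)\oplus\xi$, the identification $S\xi\cong\mor(\rr\oplus V,W)$, and the splitting of $p^*\gamma_n(V,W)$ over $S\xi$ as $\gamma_n(\rr\oplus V,W)\oplus\rr^n$ trivialised by the unit normal $g(1)$. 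What your approach buys is conceptual transparency: no explicit homeomorphism formula is needed, and naturality in $V$ and $W$ (as well as the $O(n)$--equivariance implicit in later uses) is visible from the canonicity of the bundle constructions; what the paper's approach buys is an explicit description of the first map as a restriction of composition in $\jcal_n$, which is exactly the form in which the sequence is used later (lemma \ref{lem:stabletoinduction} and the maps $\lambda^n_{V,W}$). Two small points to tidy: the identities you invoke should be phrased with fibrewise one-point compactifications, e.g.\ $S^{E}_{B}\smashprod_B S^{F}_{B}\cong S^{E\oplus F}_{B}$, followed by collapsing the section, since Thom spaces are not themselves ex-spaces over $B$ and the displayed fibrewise smashes with $\mathrm{Th}(E)$ do not literally typecheck; and it is worth recording that under your identifications the first map sends $(g,y,z)$ to $(g|_V,\,y+z\otimes g(1))$, which is precisely the paper's composition-restricted map, so your sequence agrees with the paper's on the nose and not merely termwise.
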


\begin{proof}
Identifying $S^n$ as the closure of the subspace $(i,x) \in \gamma_n (V,\rr \oplus V)$, where $i$ is the standard inclusion, the composition map 
\[
\mor_n (\rr \oplus V, W) \smashprod \mor_n (V, \rr \oplus V) \to \mor_n (V,W)
\] 
restricts to a morphism 
$\mor_n(\rr \oplus V, W) \smashprod S^{n}\to \mor_n (V,W)$.  
The homotopy cofibre of the restriction is then a quotient of $[ 0, \infty ] \times \gamma_n (\rr \oplus V,W) \times \rr^n$.  The desired homeomorphism, away from the base point, is induced by the association below. 
Consider a quadruple
$$
(t \in \lbrack 0, \infty \rbrack,
f \in 
\rr \oplus V, W) ,
y \in \rr^n \otimes (W - f(\rr \oplus V)),
z \in \rr^n)
$$
we send this to the element $(f|_V ,x) \in \mor_{n+1}(V,W)$. 
Where $x= y+ (f|_{\rr} *) (z)+ t \omega( f|_{\rr *} (1))$, 
and $\omega : W \to \rr^{n+1} \otimes W$ identifies 
$W \cong (\rr^n \otimes W)^{\bot}\subset \rr^{n+1} \otimes W $.
\end{proof}

From this cofibre sequence we can make a fibre sequence by applying the
functor $\nat_{\ecal_n}(-,F)$ for $F \in \ecal_n$. The following result is \cite[Proposition 2.2]{weiss95}.
\begin{lemma}\label{lem:stabletoinduction}
For all $V \in \jcal_n$ and  $F \in \ecal_n$, there is a natural homotopy fibre sequence
\[
\res_n^{n+1} \ind_n^{n+1} F(V) 
\longrightarrow F(V) 
\longrightarrow
\Omega^n F(\rr \oplus V)
\]
\end{lemma}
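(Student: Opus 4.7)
The plan is to obtain the fibre sequence by applying the enriched mapping space functor $\nat_{\ecal_n}(-,F)$ to the cofibre sequence of $\jcal_n$--spaces from Proposition \ref{prop:jntojn}, and then to identify the three resulting spaces using the enriched Yoneda lemma and the tensor--cotensor adjunction established in Section \ref{sec:groupstuff}.

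First, I would view the cofibre sequence
\[
\mor_n(\rr \oplus V, -) \smashprod S^{n} \longrightarrow \mor_n (V,-) \longrightarrow \mor_{n+1} (V,-)
\]
as a homotopy cofibre sequence of $\jcal_n$--spaces in the variable $W$, with $V$ held fixed. Here the rightmost term is a priori a $\jcal_{n+1}$--space but is regarded as a $\jcal_n$--space through restriction along $i_n^{n+1}\co\jcal_n\to\jcal_{n+1}$ (which is the identity on objects). One checks that the first map is a levelwise cofibration, using that the morphism spaces $\mor_n(-, -)$ are CW Thom spaces and the map arises from the canonical inclusion of the zero section side of the Thom construction; equivalently, one replaces by a cofibration if necessary.

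Next, I apply $\nat_{\ecal_n}(-,F)$. Since the first map is a cofibration of $\jcal_n$--spaces (and since, levelwise, the relevant mapping spaces are already the values of $F$), this produces a homotopy fibre sequence of based spaces. I then identify each term. By the enriched Yoneda lemma,
\[
\nat_{\ecal_n}(\mor_n(V,-),F) \;\cong\; F(V).
\]
By the tensor--cotensor adjunction for $\jcal_n$--spaces (which reads $\nat_{\ecal_n}(A\otimes E,F)\cong \Top(A,\nat_{\ecal_n}(E,F))$) together with Yoneda,
\[
\nat_{\ecal_n}\!\bigl(\mor_n(\rr\oplus V,-)\smashprod S^n,\,F\bigr)
\;\cong\;
\Top(S^n, F(\rr\oplus V)) \;=\; \Omega^n F(\rr\oplus V).
\]
Finally, by the very definition of $\ind_n^{n+1}$,
\[
\nat_{\ecal_n}\!\bigl(\res_n^{n+1}\mor_{n+1}(V,-),\,F\bigr)
\;=\; (\ind_n^{n+1} F)(V)
\;=\; \res_n^{n+1}\ind_n^{n+1}F(V),
\]
since restriction along $i_n^{n+1}$ is the identity on the underlying object $V$. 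Naturality in $V$ follows from the naturality of Proposition \ref{prop:jntojn} and of the Yoneda and adjunction isomorphisms.

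The main obstacle is the passage from a cofibre sequence of $\jcal_n$--spaces to a fibre sequence of spaces after applying $\nat_{\ecal_n}(-,F)$: one must either verify directly that the first map in the cofibre sequence is objectwise a cofibration of based spaces so that the $\Top$--mapping out gives a fibration at each $W$, or else invoke sufficient fibrancy of $F$. The cofibration property, however, is built into the Thom space construction defining $\mor_n$, so once this is checked the remaining identifications of the terms are formal consequences of the enriched Yoneda lemma and the module structure of $\ecal_n$ over $\Top$.
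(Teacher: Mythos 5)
Your proposal is correct and is essentially the paper's argument: the paper obtains this lemma precisely by applying $\nat_{\ecal_n}(-,F)$ to the cofibre sequence of Proposition \ref{prop:jntojn} and identifying the three terms via the Yoneda lemma, the tensor--cotensor adjunction, and the definition of $\ind_n^{n+1}$ (the same pattern as the proof of Lemma \ref{lem:inductionandtau}). Your attention to the cofibrancy/fibration issues needed to turn the mapping-cone sequence into a homotopy fibre sequence is a reasonable filling-in of details the paper leaves implicit (deferring to Weiss's Proposition 2.2).
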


\section{\texorpdfstring{$n$}{n}-polynomial functors}\label{sec:npolyfun}

We want to study a well-behaved collection of functors in $\ecal_0$: those whose
derivatives are eventually trivial. By analogy with functions on the real numbers, we call these functors polynomial. In this section we introduce this class of functors and examine
how they relate to differentiation.

\begin{definition}
For vector spaces $V$ and $W$ in $\jcal_0$, let $S\gamma_{n+1}(V,W)$ be the total space of the unit sphere vector bundle of $\gamma_{n+1}(V,W)$. 
\end{definition}
We can think of
$S\gamma_{n+1}(-,-)_+$ as a continuous functor from
$\jcal_0^{op} \times \jcal_0$ to based spaces, we can use this to define a functor
from $\ecal_0$ to itself. 

\begin{definition}
For $E \in \ecal_0$, define $\tau_n E \in \ecal_0$ by
$$(\tau_n E)(V) = \nat_{\ecal_0}(S \gamma_{n+1}(V,-)_+, E)$$
We also have a natural transformation of self-functors
on $\ecal_0$:
$$\rho_n \co \id \to \tau_n $$
\end{definition}
This natural transformation comes from the map
$S\gamma_{n+1}(V,W)_+ \to \mor_0(V,W)$
and the Yoneda lemma. 

There is another description of $S\gamma_{n+1}(-,-)$, by
\cite[Proposition 4.2]{weiss95} it is a homotopy colimit:
$$S\gamma_{n+1}(V,A)_+ \cong
\underset{0 \neq U \subset \rr^{n+1}}{\hocolim} \mor_0 (U \oplus V, A)$$
where the right hand side is the Bousfield-Kan formula for the homotopy colimit of 
the functor $U \to \mor_0(U \oplus V)$ as
$U$ varies over the topological category of 
non-zero subspaces of $\rr^{n+1}$ and inclusions. 
This construction is described in great detail in 
\cite[Appendix A]{lind}. 
Thus we see that
\[
\tau_n E(V) = \underset{0 \neq U \subset \rr^{n+1}}{\holim} 
E(U \oplus V)
\]

We choose to define $\tau_n$ 
in terms of $S\gamma_{n+1}(-,-)_+$
and we then define 
polynomial functors in terms of $\tau_n$.
Thus the definition below is \cite[Proposition 5.2]{weiss95}.

\begin{definition}\label{def:toppoly}
A functor $E$ from $\jcal_0$ to based spaces
is said to be \textbf{polynomial of degree less than or equal to $n$} if and only if
$$
(\rho_n)_E \co E \to \tau_n E
$$
is an objectwise weak equivalence of $\jcal_0$--spaces.
\end{definition}

We sometimes say that such an $E$ is \textbf{$n$--polynomial}.
The value of an $n$--polynomial
functor $E$ at $V$ is determined, up to homotopy, 
by the values $E(U \oplus V)$ (and the maps between them)
for non-zero subspaces of $U$ of $\rr^{n+1}$. Hence we can think 
of an $n$--polynomial functor as one where it is possible to 
extrapolate the information
of $E(U)$ from the spaces $E(U \oplus V)$ (and maps between them).

The homotopy fibre of $\rho_n \co E \to \tau_n E$ measures 
how far $E$ is from being $n$--polynomial, 
thus it would be helpful to be able to identify this fibre.
The following lemma, from \cite[Section 5]{weiss95}, does so and 
shows the fundamental relation between differentiation and 
$n$--polynomial functors.

\begin{proposition}\label{prop:sgammatojn}
The topological space $\mor_{n+1}(V,A)$
is the mapping cone (cofibre) of the projection
$S\gamma_{n+1}(V,A)_+  \to \mor_0(V,A)$.
This statement is natural in $V$ and $A$.
\end{proposition}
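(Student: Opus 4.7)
The plan is to recognise this as a direct instance of the standard identification of a Thom space with the mapping cone of its sphere bundle projection. By the definition of $\jcal_{n+1}$, the space $\mor_{n+1}(V,A)$ is the Thom space of the vector bundle $\gamma_{n+1}(V,A) \to \mor(V,A)$, i.e.\ the quotient $D\gamma_{n+1}(V,A)/S\gamma_{n+1}(V,A)$ of the closed unit disk bundle by the unit sphere bundle, with the image of the sphere bundle serving as the basepoint (the ``point at infinity'').

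The first concrete step is to identify the disk bundle $D\gamma_{n+1}(V,A)$ with the mapping cylinder of the projection $p \co S\gamma_{n+1}(V,A) \to \mor(V,A)$. Fibrewise this is the usual homeomorphism between a closed disk and the cone on its bounding sphere: send a vector $tv$ (with $v$ of unit length in the fibre over $f$ and $t \in [0,1]$) to $(v, t) \in S\gamma_{n+1}(V,A) \times [0,1]$ when $t > 0$, and to $f \in \mor(V,A)$ when $t = 0$. These fibrewise homeomorphisms assemble into a global one, under which the subbundle $S\gamma_{n+1}(V,A) \subset D\gamma_{n+1}(V,A)$ corresponds to the far end $S\gamma_{n+1}(V,A) \times \{1\}$ of the cylinder. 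Collapsing that subspace therefore produces the unreduced mapping cone $C(p)$ with cone point as basepoint.

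To pass to the based setting I would then observe that the based mapping cone of $p_+ \co S\gamma_{n+1}(V,A)_+ \to \mor_0(V,A)$ is precisely $C(p)$ with the cone point as basepoint: in the pushout defining that based cone, the two adjoined disjoint basepoints of source and target are both identified with the cone point, and what remains is exactly the unreduced mapping cone of $p$. Combining this with the previous paragraph gives a canonical basepoint-preserving homeomorphism with $\mor_{n+1}(V,A)$.

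Naturality in $V$ and $A$ is then automatic, since every ingredient -- the bundle $\gamma_{n+1}(V,A)$, its disk and sphere subbundles, the mapping cylinder, and the collapse to the mapping cone -- is constructed functorially from the composition maps of $\jcal_0$ and the induced maps on the total spaces of $\gamma_{n+1}$. There is no substantial obstacle here; the only care required is in the bookkeeping of basepoints when moving between reduced and unreduced mapping cones, which is a standard and purely formal matter.
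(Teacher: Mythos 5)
Your proof is correct and is essentially the argument the paper gives: both identify the mapping cone with the Thom space by a fibrewise radial parametrisation, with the zero section corresponding to $\mor_0(V,A)$ and the cone point to the point at infinity. The only cosmetic difference is that you work with the disk-bundle model $D\gamma_{n+1}/S\gamma_{n+1}$ and the mapping cylinder over $[0,1]$, whereas the paper parametrises the cone by $[0,\infty]$ and maps directly onto the one-point compactification via $(f,x,t)\mapsto (f,xt)$; these models of the Thom space agree since the base Stiefel manifold $\mor(V,A)$ is compact.
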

\begin{proof}
The mapping cone is the pushout of the diagram below,
where we use $[0,\infty] = [0, \infty)^c$ (with basepoint $\infty$)
instead of the unit interval. This helps in identifying the pushout. 
$$\xymatrix{
S\gamma_{n+1}(V,A)_+  \ar[r] \ar[d] & \mor_0(V,A)_+ \ar[d] \\
S\gamma_{n+1}(V,A)_+ \smashprod [0, \infty] \ar[r] & P
}$$
The top horizontal map is the projection, the left vertical map
sends a point $x$ to $(x,0)$.

The idea is that every element of $V$ can be written as a unit 
vector times some length :
$S(V) \times [0, \infty) \cong V$.
Thus writing $S^V$ for the one-point compactification of $V$, we see that 
$S(V) \times [0, \infty] \cong S^V$,
where any vector of 'infinite length'
is identified with the point at infinity in $S^V$.

The pushout consists of points
$(f,x,t)$, where $t \in [0, \infty]$ and
$(f,x) \in S\gamma_{n+1}(V,A)$, modulo the relations
$(f,x,\infty) = (f',x',\infty)$
and $(f,x,0) = (f,x',0)$.

We have a map from this pushout to $\mor_{n+1} (V,W)$,
it sends any point of form $(f,x,\infty)$ to the basepoint and sends
$(f,x,t)$ to $(f,xt)$ for all other $t$. It is clear that this is a well-defined map;
indeed, it is a homeomorphism.
\end{proof}

\begin{lemma}\label{lem:inductionandtau}
For any $n \in \nn$, $V \in \jcal_0$ and $E \in \ecal_0$, there exists a natural fibration sequence
$$
\res^{n+1}_0 \ind^{n+1}_0 E(V) \to E(V) \to \tau_n E(V)
$$
\end{lemma}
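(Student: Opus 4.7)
The plan is to combine Proposition \ref{prop:sgammatojn} with the enriched Yoneda lemma and the definition of $\tau_n$, exploiting that mapping out of a cofibre sequence of $\jcal_0$--spaces into a fixed object $E$ produces a fibration sequence of based spaces of natural transformations.

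First I would take the pushout square displayed in the proof of Proposition \ref{prop:sgammatojn}, which realises $\mor_{n+1}(V,-)$ as the mapping cone of $S\gamma_{n+1}(V,-)_+ \to \mor_0(V,-)$ in $\ecal_0$, and apply the contravariant functor $\nat_{\ecal_0}(-, E)$ to it. The result is a pullback square of based spaces, one leg of which is the evaluation-at-$0$ map from $\Top([0,\infty], \tau_n E(V))$ to $\tau_n E(V)$. Since $[0,\infty]$ is based at $\infty$ and contractible, this leg is a Hurewicz fibration with contractible total space, so the pullback computes the honest homotopy fibre of the opposite leg into $\tau_n E(V)$.

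It then remains to identify the three mapping spaces. By enriched Yoneda, $\nat_{\ecal_0}(\mor_0(V,-), E) = E(V)$; by the definition of $\tau_n$, $\nat_{\ecal_0}(S\gamma_{n+1}(V,-)_+, E) = \tau_n E(V)$ with induced map $(\rho_n)_E(V)$; and by the formula for induction, after noting that $\mor_{n+1}(V,-)$ is being regarded as a $\jcal_0$--space via restriction along $i_0^{n+1}$, $\nat_{\ecal_0}(\mor_{n+1}(V,-), E) = \res_0^{n+1}\ind_0^{n+1} E(V)$. Combining these identifications with the homotopy fibre computation from the previous paragraph produces the desired fibration sequence, and naturality in $V$ propagates automatically from the naturality clause of Proposition \ref{prop:sgammatojn}.

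The only mild obstacle is the step of turning the pushout into the claimed pullback plus path-space fibration, but both facts reduce to standard enrichment formalism combined with the observation that $\Top([0,\infty], Z) \to Z$ is a Hurewicz fibration for every based $Z$. In particular no fibrancy or cofibrancy assumption on $E$ is required, which is consistent with the lemma being stated for an arbitrary $E \in \ecal_0$.
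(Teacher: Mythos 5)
Your proposal is correct and follows essentially the same route as the paper: apply $\nat_{\ecal_0}(-,E)$ to the cofibre sequence $S\gamma_{n+1}(V,-)_+ \to \mor_0(V,-) \to \mor_{n+1}(V,-)$ of Proposition \ref{prop:sgammatojn} and identify the three terms via Yoneda, the definition of $\tau_n$, and the formula for induction. The only difference is that you spell out why this yields a fibration sequence (via the cotensor identification $\nat_{\ecal_0}(S\gamma_{n+1}(V,-)_+ \smashprod [0,\infty],E) \cong \Top([0,\infty],\tau_n E(V))$ and the path-space fibration), a step the paper leaves implicit.
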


\begin{proof}
We have the natural cofibre sequence
$$S\gamma_{n+1}(V,A)_+  \to \mor_0(V,A)_+ \to \mor_{n+1}(V,A)$$
which is natural in $V$ and $A$ with respect to $\jcal_0$.
This assembles to give a cofibre sequence
of $\jcal_0$--spaces:
$$S\gamma_{n+1}(V,-)_+  \to \mor_0(V,-) \to \mor_{n+1}(V,-)$$
Now consider the induced maps of spaces
$$\nat_{\ecal_0}( S\gamma_{n+1}(V,-)_+,E)
\leftarrow
\nat_{\ecal_0}(\mor_0(V,-),E)
\leftarrow
\nat_{\ecal_0}(\mor_{n+1}(V,-),E)$$
We can identify the above with
$$(\tau_n E)(V)  \leftarrow E(V) \leftarrow 
(\res_0^n \ind_0^{n+1}  E)(V)$$
which is a fibre sequence for all $V$.
\end{proof}

\begin{corollary}\label{cor:npolydiff}
Let $E$ a functor from $\jcal_0$ to based spaces
that is $n$--polynomial.
Then $\ind_0^{n+1} E$ (and hence 
$\ind_0^{n+1}  \varepsilon^* E$)  
is objectwise acyclic.
\end{corollary}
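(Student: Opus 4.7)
The plan is to deduce this essentially immediately from Lemma \ref{lem:inductionandtau} combined with Definition \ref{def:toppoly}. The lemma provides, for any $E \in \ecal_0$ and $V \in \jcal_0$, a natural fibration sequence
\[
\res_0^{n+1} \ind_0^{n+1} E(V) \longrightarrow E(V) \longrightarrow \tau_n E(V),
\]
and by the proof of the lemma, the second map is precisely the natural transformation $(\rho_n)_E$ evaluated at $V$ (it is induced by the projection $\mor_0(V,-) \to \mor_{n+1}(V,-)$'s cofibre, which is exactly the map used to build $\rho_n$ via the Yoneda lemma).

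First, since $E$ is assumed $n$--polynomial, Definition \ref{def:toppoly} tells us that $(\rho_n)_E$ is an objectwise weak equivalence. In a fibration sequence whose base map is a weak equivalence, the fibre is weakly contractible; hence $\res_0^{n+1} \ind_0^{n+1} E(V)$ is acyclic for every $V$. Because $\jcal_{n+1}$ and $\jcal_0$ share the same objects and restriction is just precomposition along $i_0^{n+1}$, this is precisely the statement that $\ind_0^{n+1} E \in \ecal_{n+1}$ is objectwise acyclic.

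For the parenthetical claim about $\ind_0^{n+1} \varepsilon^* E \in O(n+1)\ecal_{n+1}$, I would invoke the remark made just after the definition of differentiation: the forgetful functor $i^*$ satisfies $i^* \ind_0^{n+1} \varepsilon^* E = \ind_0^{n+1} E$. Since weak equivalences in $O(n+1)\Top$ are detected on underlying spaces (as recalled in section \ref{sec:groupstuff}), objectwise acyclicity of $\ind_0^{n+1} \varepsilon^* E$ is equivalent to objectwise acyclicity of $\ind_0^{n+1} E$, which we have already established.

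There is no real obstacle here; the substantive work has already been packaged into Lemma \ref{lem:inductionandtau} (which identifies the fibre of $\rho_n$ with $\res_0^{n+1} \ind_0^{n+1} E$) and Proposition \ref{prop:sgammatojn} (which supplies the cofibre sequence behind it). The only small point to be careful about is to confirm that the middle map of the fibration sequence really is $(\rho_n)_E$, so that the $n$--polynomial hypothesis applies directly; this is immediate from tracing the Yoneda identifications in the proof of Lemma \ref{lem:inductionandtau}.
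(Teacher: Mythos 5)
Your proof is correct and is exactly the argument the paper intends: the corollary is stated as an immediate consequence of Lemma \ref{lem:inductionandtau}, whose fibration sequence has base map $(\rho_n)_E$, a weak equivalence by the $n$--polynomial hypothesis, so the fibre $\res_0^{n+1}\ind_0^{n+1}E(V)$ is acyclic, and the equivariant statement follows since $i^*\ind_0^{n+1}\varepsilon^*E = \ind_0^{n+1}E$ and equivalences are detected on underlying spaces.
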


As one would hope from the words used, 
any $(n-1)$--polynomial object of $\ecal_0$
is $n$--polynomial. That result is 
\cite[Proposition 5.4]{weiss95}, which we reproduce 
later as proposition \ref{prop:mpolynpoly}.

Our goal is to construct a tower relating the $n$ and $(n-1)$--polynomial approximations of an object $E$ of $\ecal_0$ and  classify the fibres of this tower. Any such fibre will be $n$--polynomial and be $T_{n-1}$--contractible, hence we make the following definition, see
\cite[Definition 7.1]{weiss95}.

\begin{definition}\label{def:tophomog}
An object $E \in \ecal_0$ is \textbf{$n$--homogeneous} if it is
polynomial of degree at most $n$ and $T_{n-1} E$ is
weakly equivalent to a point.  
\end{definition}

We will construct model structures that capture the notion
of $n$--polynomial or $n$--homogeneous functors in their homotopy
theory. To do so, we will need some more technical
information on $n$--polynomial functors.

A routine exercise in using the long exact 
sequence of a fibration and the five lemma
gives \cite[Lemma 5.3]{weiss95}, which is 
stated below. 
\begin{lemma}\label{lem:fibreisnpoly}
Let $g \co E \to F$ be a map in $\ecal_0$, assume that 
$\ind_0^{n+1} F$ is objectwise contractible
and $E$ is $n$--polynomial. 
Then the homotopy fibre of $g$ is an 
$n$--polynomial functor. 
\end{lemma}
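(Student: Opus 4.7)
The plan is to exploit the fact that $\tau_n$ preserves homotopy fibre sequences and to run the five lemma on the long exact sequences of homotopy groups of the evident map of fibrations.

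First I would set up the diagram. Let $H \to E \overset{g}{\to} F$ be a (levelwise) homotopy fibre sequence in $\ecal_0$, where $H$ is the homotopy fibre of $g$. The natural transformation $\rho_n \co \id \Rightarrow \tau_n$ gives a commuting ladder
\[
\xymatrix{
H(V) \ar[r] \ar[d]^{(\rho_n)_H} & E(V) \ar[r]^{g} \ar[d]^{(\rho_n)_E} & F(V) \ar[d]^{(\rho_n)_F} \\
\tau_n H(V) \ar[r] & \tau_n E(V) \ar[r]^{\tau_n g} & \tau_n F(V)
}
\]
for every $V \in \jcal_0$. The two outer hypotheses feed the two outer vertical arrows: since $E$ is $n$--polynomial, Definition \ref{def:toppoly} tells us $(\rho_n)_E$ is an objectwise weak equivalence; since $\ind_0^{n+1} F$ is objectwise contractible, Lemma \ref{lem:inductionandtau} identifies the homotopy fibre of $(\rho_n)_F$ with the contractible functor $\res^{n+1}_0 \ind^{n+1}_0 F$, so $(\rho_n)_F$ is also an objectwise weak equivalence.

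Next I would argue that the bottom row is still a homotopy fibre sequence. Using the alternative description
\[
\tau_n E(V) \cong \underset{0 \neq U \subset \rr^{n+1}}{\holim} E(U \oplus V)
\]
recalled from \cite[Proposition 4.2]{weiss95}, $\tau_n$ is a pointwise homotopy limit and hence commutes with homotopy fibres (one can equivalently note that $\nat_{\ecal_0}(S\gamma_{n+1}(V,-)_+, -)$ is a mapping space construction that sends objectwise fibrations to fibrations, provided one replaces $E$ and $F$ by fibrant objects first, which is harmless for detecting weak equivalences). Thus $\tau_n H(V) \to \tau_n E(V) \to \tau_n F(V)$ is a homotopy fibre sequence.

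Finally I would conclude by the five lemma. Comparing the two long exact sequences of homotopy groups (based at any basepoint, with the usual action of $\pi_1$ at the low end) determined by the two fibre sequences above, the induced maps on $\pi_*(E(V))$ and $\pi_*(F(V))$ are isomorphisms, so the induced map on $\pi_*(H(V))$ is an isomorphism as well. Hence $(\rho_n)_H$ is an objectwise weak equivalence and $H$ is $n$--polynomial. The main technical point, which is the only thing that is not formal, is the verification that $\tau_n$ preserves homotopy fibre sequences; everything else is a straightforward invocation of the hypotheses and the standard long-exact-sequence argument.
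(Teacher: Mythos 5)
Your overall strategy is the intended one (the paper itself only cites Weiss and describes the argument as "long exact sequence of a fibration plus five lemma", with the observation that $\tau_n$, being a homotopy limit, preserves fibre sequences), but one step as written is not valid: you infer that $(\rho_n)_F \co F \to \tau_n F$ is an objectwise weak equivalence from the fact that its homotopy fibre $\res_0^{n+1}\ind_0^{n+1}F$ is objectwise contractible. A map whose homotopy fibre over the basepoint is contractible need not be a weak equivalence -- it gives isomorphisms on $\pi_k$ for $k \geqslant 1$ only at basepoints lying over the basepoint component of the target, and it gives no surjectivity on $\pi_0$. This is precisely the subtlety that Lemma \ref{lem:npolyfibre} addresses, and there the conclusion requires in addition that \emph{both} objects be $n$--polynomial and that the target be connected at infinity; in the present lemma $F$ is arbitrary apart from having contractible $(n+1)$st derivative, so your claim is unjustified (and if it were true in this generality, the connectedness-at-infinity hypothesis in Lemma \ref{lem:npolyfibre} and the delicate argument given there would be superfluous).

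The argument can be repaired without changing its shape, because the comparison of the two fibre sequences only ever sees the basepoint components of $F(V)$ and $\tau_n F(V)$: the homotopy fibre $H(V)$ sits over the basepoint of $F(V)$, and $\tau_n H(V)$ is the homotopy fibre of $\tau_n E(V) \to \tau_n F(V)$ over the image of the basepoint. Contractibility of the fibre of $(\rho_n)_F$ supplied by Lemma \ref{lem:inductionandtau} gives exactly the local information you need: isomorphisms $\pi_k(F(V),*) \to \pi_k(\tau_n F(V),*)$ for $k \geqslant 1$, plus the statement that all points of $F(V)$ mapping into the basepoint component of $\tau_n F(V)$ lie in the basepoint component of $F(V)$ (connectedness and non-emptiness of the fibre). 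With those inputs, the five lemma on the two long exact sequences, together with a short $\pi_0$ diagram chase using $(\rho_n)_E$ being a weak equivalence, yields that $H(V) \to \tau_n H(V)$ is a weak equivalence at every basepoint and a bijection on $\pi_0$. So replace the global claim about $(\rho_n)_F$ by this basepoint-component analysis; the rest of your proposal (in particular that $\tau_n$, as a homotopy limit, preserves homotopy fibre sequences) is fine.
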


In particular, this proves that the homotopy fibre of
a map between $n$--polynomial objects is $n$--polynomial. 
We now need \cite[Definition 5.9]{weiss95}, 
this condition often crops up. The following lemma
is an example of why this notion is useful. 

\begin{definition}
We say that a functor $E \in \ecal_0$
is \textbf{connected at infinity}
if the space $\hocolim_k E(\rr^k)$
is connected. 
\end{definition}

The following result is \cite[Proposition 5.10]{weiss95}
and we follow that proof. 

\begin{lemma}\label{lem:npolyfibre} 
Let $g \co E \to F$ be a morphism in $\ecal_0$
between $n$--polynomial objects
such that the homotopy fibre of $g$ is objectwise
acyclic and $F$ is connected at infinity. 
Then $g$ is an objectwise weak equivalence.
\end{lemma}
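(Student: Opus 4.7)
The idea is to use the long exact sequence of the fibration $H(V) \to E(V) \to F(V)$, where $H$ is the homotopy fibre of $g$, combined with the observation that in this paper ``objectwise acyclic'' is being used synonymously with ``objectwise weakly contractible'' (cf.\ how Corollary~\ref{cor:npolydiff} feeds into the hypothesis of Lemma~\ref{lem:fibreisnpoly}). Given $H(V) \simeq *$, the long exact sequence yields that $g(V)$ induces an isomorphism on $\pi_k$ for $k \geq 1$ (at every basepoint) and an injection on $\pi_0$; the only remaining obstruction to $g(V)$ being a weak equivalence is surjectivity on $\pi_0$, which is automatic as soon as $F(V)$ is path-connected. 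Before tackling this I would record, via Lemma~\ref{lem:fibreisnpoly}, that $H$ is itself $n$-polynomial (the hypothesis $\ind_0^{n+1} F \simeq *$ of that lemma coming from Corollary~\ref{cor:npolydiff} applied to the $n$-polynomial $F$); this makes the role of the $n$-polynomial hypothesis on $E$ transparent.

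The heart of the argument is therefore showing $F(V)$ is path-connected for every $V$. Iterating the equivalence $F \simeq \tau_n F$ from the $n$-polynomial hypothesis gives, for each $k \geq 1$,
\[
F(V) \simeq \holim_{(U_1,\ldots,U_k)} F(U_1 \oplus \cdots \oplus U_k \oplus V)
\]
indexed over $k$-tuples of non-zero subspaces of $\rr^{n+1}$; as $k$ grows the arguments of $F$ range over subspaces of $\ucal$ of arbitrarily large dimension. Since the indexing category is non-empty and path-connected, connectivity of the homotopy limit reduces (up to a $\lim^1$-correction handled by a Milnor/Bousfield--Kan-type argument on $\pi_0$) to the eventual connectivity of the indexed values of $F$, which the hypothesis $\pi_0 \hocolim_k F(\rr^k) = *$ provides. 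Once $\pi_0 F(V) = *$ is established, $\pi_0 g(V)$ is trivially surjective, and the fibration long exact sequence completes the proof.

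The step I expect to be the main obstacle is this last propagation: converting the colimit-style hypothesis ``$F$ is connected at infinity'' into the pointwise statement ``$F(V)$ is connected for every $V$'' through the polynomial extrapolation $\tau_n$. The subtlety is that $\pi_0$ of a homotopy limit is not simply the $\pi_0$ of the indexed values, because of the Milnor $\lim^1$-type term; one has to exploit the connectivity and good local-finiteness of the category of non-zero subspaces of $\rr^{n+1}$, together with iteration of $\tau_n$, to kill this correction term. Everything else -- reducing to contractibility of $H(V)$, stepping through the long exact sequence, and identifying $H$ as $n$-polynomial -- is formal.
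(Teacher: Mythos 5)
Your reduction hinges on the claim that $F(V)$ is path-connected for every $V$, and this claim -- the step you yourself flag as the main obstacle -- is false, so the proposed route cannot be completed. Being $n$--polynomial and connected at infinity does not imply objectwise connectivity: take $F(V) = EO(n)_+ \smashprod_{O(n)} \Omega^\infty (X \smashprod S^{nV})$ with $X$ the sphere spectrum with trivial $O(n)$--action, the standard $n$--homogeneous (hence $n$--polynomial) functor recalled in the introduction. Since $X \smashprod S^{n\rr^k}$ is $(nk-1)$--connected, $F(\rr^k)$ is connected for $k \geqslant 1$, so $F$ is connected at infinity; but $F(0) \cong BO(n)_+ \smashprod \Omega^\infty \Sigma^\infty S^0$ has $\pi_0 \cong \mathbb{Z}$. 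Taking $g = \id_F$ even satisfies every hypothesis of the lemma while $F(0)$ is disconnected, so no refinement of the Bousfield--Kan $\lim^1$ analysis can give $\pi_0 F(V) = \ast$: the $\pi_1$--contributions to $\pi_0$ of $\holim_{0 \neq U \subset \rr^{n+1}} F(U \oplus V)$ are precisely what create these extra components, and they persist under iterating $\tau_n$ because $\rho \co F \to \tau_n F$ is already an objectwise equivalence for polynomial $F$. Relatedly, your opening assertion that the long exact sequence gives isomorphisms on $\pi_k$ ``at every basepoint'' and an injection on $\pi_0$ is unjustified: the hypothesis only controls the homotopy fibre over the canonical basepoint, so it says nothing about components of $F(V)$ or $E(V)$ away from the basepoint component. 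This is exactly the difficulty the paper's proof isolates in its first sentences. (Your observation via Lemma \ref{lem:fibreisnpoly} that the fibre is $n$--polynomial is correct but not needed.)

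The paper's argument never claims objectwise connectivity of $F$. Instead it introduces the subfunctor $F_b \subset F$ of basepoint components and shows that $T_n F_b \to T_n F$ is an objectwise weak equivalence: connectivity at infinity allows any component of a value of $F$ to be pushed into a basepoint component after increasing dimension, and such increases are built into the later stages of $T_n = \hocolim_k \tau_n^k$, since the stage--$l$ term only involves the values of $F$ on $V \oplus W$ with $\dim W \geqslant l$. The extra components are thus recovered only after passing to the sequential homotopy colimit, not at any finite stage and not in $F$ itself (note $F_b$ is not $n$--polynomial, so $T_n F_b$ is genuinely larger than $F_b$). With this in hand, the homotopy fibre of $T_n E(V) \to T_n F(V)$ over an arbitrary basepoint is identified with one over a point of the basepoint component, hence is contractible by hypothesis, and the conclusion follows from $E \simeq T_n E$ and $F \simeq T_n F$. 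Replacing ``$F(V)$ is connected for all $V$'' by ``$F_b \to F$ becomes an objectwise equivalence after applying $T_n$'' is the idea missing from your proposal.
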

\begin{proof}
The problem lies in the fact that at each stage $V$, 
the homotopy
fibre is defined via a fixed choice of basepoint in $F(V)$, 
but we need an isomorphism of homotopy groups 
between $E(V)$ and $F(V)$ for all choices of basepoints. 
Let $F_b(V)$ be the subspace of $F(V)$ consisting 
of only the basepoint component of $F(V)$.

We prove that $F_b \to F$ is a
equivalence after applying the functor 
$T_n = \hocolim_k \tau_n^k$.  
Note that since $E$ and $F$ are $n$--polynomial, 
the maps $E \to T_n E$ and $F \to T_n F$ are objectwise weak equivalences. 
Consider the map
\[
\hocolim_k \tau_n^k F_b 
\longrightarrow
\hocolim_k \tau_n^k F 
\]
For each choice of basepoint, the homotopy fibre of 
$\tau_n^k F_b \to \tau_n^k F$
is either empty or contractible.
If $C$ is some component in  
$F(V) \simeq \tau_n^k F(V)$, 
then because $f$ is connected at infinity,
there is some $l$ such that 
the image of $C$ in 
$\tau_n^l F(V)$ is in the basepoint component. 
This holds since 
$\tau_n^l F(V)$ is defined using only the terms
$F(V \oplus U)$ for $U$ of dimension greater
than or equal to $l$. 
Hence $C$ is contained in 
$\tau_n^l F_b(V)$ and there can be no empty fibres.

We thus have objectwise weak equivalences
\[
T_n F_b \to T_n F
\]
Consider the map $T_n E(V) \to T_n F(V)$
and choose some basepoint $x$ in $T_n F(V)$, 
then we see that $x \in \tau_n^k F(V)$ for some $k$.
As $k$ increases, eventually $x$ is in the same component
as the canonical basepoint of $\tau_n^k F(V)$.
Hence by our assumptions, the homotopy fibre for this choice 
$x$ is contractible. 
So $T_n E \to T_n F$ is a objectwise weak equivalence
and it follows that $E \to F$ is a objectwise weak equivalence. 
\end{proof}

Now we turn to \cite[lemma 5.11]{weiss95}
to show that for $\tau_m$ preserves 
$n$--polynomial functors. The proof is simply 
that homotopy limits commute, 
(so $\tau_n \tau_m = \tau_m \tau_n$)
and that homotopy limits preserve
weak equivalences.   
\begin{lemma}\label{lem:taumEisnpoly}
If $E$ is an $n$--polynomial object
of $\ecal_0$, then so is $\tau_m E$ for any $m \geqslant 0$. 
\end{lemma}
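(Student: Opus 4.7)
The plan is to use the description of $\tau_n$ as an objectwise homotopy limit, combined with the general principles that homotopy limits commute with each other and preserve objectwise weak equivalences, to reduce the claim to a naturality argument. Recall that, for any $F \in \ecal_0$,
\[
\tau_n F(V) \simeq \underset{0 \neq U \subset \rr^{n+1}}{\holim} F(U \oplus V),
\]
and that $(\rho_n)_F \co F \to \tau_n F$ is induced by the projection $S\gamma_{n+1}(V,-)_+ \to \mor_0(V,-)$.

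First I would show that $\tau_m$ preserves objectwise weak equivalences. This is essentially immediate from the above homotopy limit description: a weak equivalence $f \co F \to F'$ induces a weak equivalence $F(U \oplus V) \to F'(U \oplus V)$ for each $U$ and $V$, and the Bousfield--Kan style homotopy limit over the topological category of non-zero subspaces of $\rr^{m+1}$ preserves such a levelwise equivalence (all based topological spaces being fibrant in the Quillen model structure). Applying $\tau_m$ to the weak equivalence $(\rho_n)_E \co E \to \tau_n E$ coming from the assumption that $E$ is $n$--polynomial therefore yields an objectwise weak equivalence $\tau_m E \to \tau_m \tau_n E$.

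Next I would identify $\tau_m \tau_n E$ with $\tau_n \tau_m E$. Writing out the iterated homotopy limits,
\[
\tau_m \tau_n E(V) \simeq \underset{0 \neq U' \subset \rr^{m+1}}{\holim} \underset{0 \neq U \subset \rr^{n+1}}{\holim} E(U \oplus U' \oplus V),
\]
and similarly for $\tau_n \tau_m E(V)$, so the canonical interchange map of iterated homotopy limits gives a natural objectwise weak equivalence $\tau_m \tau_n E \simeq \tau_n \tau_m E$. Combining this with the previous step, the composite
\[
\tau_m E \longrightarrow \tau_m \tau_n E \simeq \tau_n \tau_m E
\]
is an objectwise weak equivalence, and by naturality of $\rho_n$ this composite is (up to the commutativity isomorphism) precisely $(\rho_n)_{\tau_m E} \co \tau_m E \to \tau_n \tau_m E$. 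Hence $\tau_m E$ is $n$--polynomial.

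The only mild subtlety will be checking that the interchange-of-homotopy-limits step is genuinely a weak equivalence in our setting of topological enrichment, and that the composite really does agree with $(\rho_n)_{\tau_m E}$; both of these are formal consequences of the Bousfield--Kan construction together with the naturality of $\rho_n$ in the variable $V$, so no serious obstacle is anticipated.
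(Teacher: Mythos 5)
Your proposal is correct and follows the same route as the paper, which proves this lemma precisely by observing that homotopy limits commute (so $\tau_m \tau_n \simeq \tau_n \tau_m$) and preserve objectwise weak equivalences; your write-up simply spells out these two points and the naturality check in more detail.
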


We will need an improved 
version of corollary \ref{cor:npolydiff}, specifically, we need
\cite[Corollary 5.12]{weiss95}, which we give below.
We will see later that this result 
implies that 
$\ind_0^n \varepsilon^*$ takes fibrant objects of the 
$n$--polynomial model structure on $\ecal_0$ to 
fibrant objects of the $n$--stable model structure on 
$O(n) \ecal_n$. 

\begin{proposition}\label{prop:npolynstable}
If $E$ is an $n$--polynomial object of $\ecal_0$, 
then for any $V \in \jcal_0$, we have a weak equivalence of 
spaces 
\[
\ind_0^n E(V) \to \Omega^n \ind_0^n E(V \oplus \rr)
\] 
\end{proposition}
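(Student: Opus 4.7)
The plan is to deduce this from the general fibre sequence of Lemma \ref{lem:stabletoinduction} applied to $F = \ind_0^n E \in \ecal_n$, together with the vanishing result of Corollary \ref{cor:npolydiff}.

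First I would observe that since $\res_0^{n+1} = \res_0^n \circ \res_n^{n+1}$ (as $i_0^{n+1} = i_n^{n+1} \circ i_0^n$), passing to right adjoints we get the composition law
\[
\ind_0^{n+1} = \ind_n^{n+1} \circ \ind_0^n
\]
for the non-equivariant induction functors. Thus for any $E \in \ecal_0$, the object $\ind_n^{n+1} \ind_0^n E \in \ecal_{n+1}$ is naturally identified with $\ind_0^{n+1} E$.

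Next I would apply Lemma \ref{lem:stabletoinduction} to $F = \ind_0^n E \in \ecal_n$, yielding a natural homotopy fibre sequence
\[
\res_n^{n+1} \ind_n^{n+1} \ind_0^n E(V)
\longrightarrow
\ind_0^n E(V)
\longrightarrow
\Omega^n \ind_0^n E(V \oplus \rr).
\]
Under the identification above, the leftmost term is $\res_n^{n+1} \ind_0^{n+1} E(V)$. Now since $E$ is $n$--polynomial, Corollary \ref{cor:npolydiff} tells us that $\ind_0^{n+1} E$ is objectwise acyclic; this property is clearly preserved by the restriction $\res_n^{n+1}$ (which is simply precomposition with $i_n^{n+1}$). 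Hence the fibre of the displayed sequence is weakly contractible, and the long exact sequence of homotopy groups forces the map $\ind_0^n E(V) \to \Omega^n \ind_0^n E(V \oplus \rr)$ to be a weak equivalence for every $V$.

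I do not foresee a main obstacle: once the composition law $\ind_0^{n+1} = \ind_n^{n+1} \circ \ind_0^n$ is in hand, the result is a direct combination of the two previously established lemmas. The only subtlety worth spelling out is that the statement of the proposition is non-equivariant, so it is the underlying $\jcal_n$-space $i^*\ind_0^n\varepsilon^* E = \ind_0^n E$ that is being fed into Lemma \ref{lem:stabletoinduction}; the $O(n)$--action plays no role in the argument.
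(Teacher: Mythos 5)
Your identification of the fibre term is fine and matches the paper: the composition law $\ind_0^{n+1} = \ind_n^{n+1}\circ\ind_0^n$ together with Lemma \ref{lem:stabletoinduction} applied to $\ind_0^n E$, and Corollary \ref{cor:npolydiff} to see that the fibre is objectwise contractible, is exactly how the paper sets up the argument. The gap is in your last step: a map of spaces whose homotopy fibre \emph{over the canonical basepoint} is weakly contractible need not be a weak equivalence. The long exact sequence only controls homotopy groups based at that basepoint (and the components mapping to the basepoint component); it gives no surjectivity on $\pi_0$ and says nothing about components of the target not hit by the source. A toy example is the inclusion of the basepoint into $S^0$: the homotopy fibre over the basepoint is contractible, but the map is not an equivalence. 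This is a real concern here, since the target $V \mapsto \Omega^n \ind_0^n E(V\oplus\rr)$ is an $n$--fold loop space whose $\pi_0$ can be nontrivial, so it may well be disconnected.

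The paper closes this gap by invoking Lemma \ref{lem:npolyfibre} (Weiss's Proposition 5.10), which is designed precisely for this basepoint issue, and this forces extra work that your proposal omits: one must check that $\ind_0^n E$ is $n$--polynomial (it is the homotopy fibre of $E \to \tau_{n-1}E$, so this follows from Lemmas \ref{lem:taumEisnpoly} and \ref{lem:fibreisnpoly}), that $V \mapsto \Omega^n \ind_0^n E(V \oplus \rr)$ is $n$--polynomial, and that the latter is connected at infinity (because it is the restriction of an object of $\ecal_n$). Only with these hypotheses in place does the contractibility of the fibre upgrade to an objectwise weak equivalence. So the skeleton of your argument is right, but the appeal to the long exact sequence alone does not suffice; you need the polynomiality and connectedness-at-infinity input and Lemma \ref{lem:npolyfibre} to finish.
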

\begin{proof}
When $n=0$ there is nothing to prove, so let $n >0$. 
By corollary \ref{cor:npolydiff} we see that 
$\ind_0^{n+1} E$ is objectwise contractible, 
but this space also appears 
in the homotopy fibration sequence of 
lemma \ref{lem:stabletoinduction}.
\[
\res_n^{n+1} \ind_0^{n+1} E(V)
\longrightarrow
\ind_0^n E(V)
\longrightarrow
\Omega^n \ind_0^n E(V \oplus \rr)
\]
We claim that 
$\ind_0^n E$ and the functor 
$F$ defined by the rule 
$V \mapsto \Omega^n \ind_0^n E(V \oplus \rr)$ are both $n$--polynomial. 
Furthermore we claim that $F$ is 
connected at infinity. 

Once we have shown this, we will be able to apply 
lemma \ref{lem:npolyfibre} to see that 
we have a weak equivalence for all $V \in \jcal_0$
\[
\ind_0^n E(V)
\longrightarrow
\Omega^n \ind_0^n E(V \oplus \rr)
\]

Our first claim was that $\ind_0^n E$ 
is an $n$--polynomial object of $\ecal_0$. 
We know that $E$ is $n$--polynomial
and lemma \ref{lem:taumEisnpoly}
tells us that $\tau_{n-1} E$ is $n$--polynomial. 
The homotopy fibre of 
$E \to \tau_{n-1} E$ is 
$\ind_0^n E$ and by 
by lemma \ref{lem:fibreisnpoly} we see that it is 
$n$--polynomial.

Our second claim was that 
$F$ was $n$--polynomial. 
We know that the functor
$\ind_0^n E$ is $n$--polynomial
by the first claim, $\Omega$
preserves $n$--polynomial functors, 
thus $\Omega^n \ind_0^n E$ is $n$--polynomial.
Now we use the fact that if $A$ is $n$--polynomial, 
then the functor $V \mapsto A(V \oplus \rr)$
is $n$--polynomial, and thus $F$ is $n$--polynomial. 

Our third claim was that 
$F$ is connected at infinity, this follows
since it is the restriction of 
an object of $\ecal_n$.
\end{proof}

\section{Polynomial and homogeneous model structures}\label{sec:npolymodel}

As with calculus in the smooth setting, we wish to approximate a functor in $\ecal_0$ by an $n$--polynomial functor. This is done by iterating $\tau_n$ to construct a functorial $n$--polynomial replacement. From this we can create a new model structure on $\ecal_0$, where the fibrant objects are $n$--polynomial. We show that this model structure
can also be created using a left Bousfield localisation. Combining these two methods tells us more about the $n$--polynomial model structure, in particular, we see that it is right proper and cellular and hence can undergo a right Bousfield localisation.  By a careful choice of localisation we construct a model category whose cofibrant-fibrant
objects are precisely the $n$--homogeneous functors of $\ecal_0$. 
We start by introducing the projective model structure model structure on $\ecal_0$.

When equipped with this model structure, $\ecal_0$ is a topological model category, in the sense of \cite[Definition 4.2.18]{hov99}. 
Hence we see that the enrichment, tensor product and cotensor product
are well behaved with respect to the model structure. 
This is analogous to simplicial model categories, but with topological
spaces taking the place of simplicial sets. 

Recall the notion of cellular model categories from \cite[Section 12]{hir03}, this is a stronger version of cofibrant generation. For example, based topological spaces
are cellular. This condition is a requirement for our localisations to exist. 
The following result is \cite[Theorem 6.5]{mmss01} combined with a  
routine, but technical, argument to see that the resulting model structure is cellular. 

\begin{lemma}
There is a proper, cellular model structure on the category $\ecal_0$ where the fibrations and weak equivalences are defined objectwise. This is known as the \textbf{projective model structure} and we simply write $\ecal_0$ for this model structure. The generating cofibrations have form 
$$\mor_0(V,-) \smashprod S^{n-1}_+ \to \mor_0(V,-) \smashprod D^n_+$$ 
and the generating acyclic cofibrations have form
$$\mor_0(V,-) \smashprod D^{n}_+ \to \mor_0(V,-) \smashprod (D^n \times [0,1])_+$$ 
for $V \in \jcal_0$ and $n \geqslant 0$.
Let $[-,-]$ denote maps in the homotopy category of 
$\ecal_0$.
\end{lemma}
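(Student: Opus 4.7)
The plan is to invoke Theorem 6.5 of \cite{mmss01}, which transfers the projective model structure along the free-forgetful adjunction between $\ecal_0 = \jcal_0 \Top$ and $\prod_{V \in \jcal_0}\Top$. The enriched Yoneda lemma identifies the free functor at $V$ as $A \longmapsto \mor_0(V,-) \smashprod A$, so applying it to the standard generating cofibrations $\{S^{n-1}_+ \to D^n_+\}$ and generating acyclic cofibrations $\{D^n_+ \to (D^n \times [0,1])_+\}$ of $\Top$ produces exactly the two families claimed. The adjunction $\nat_{\ecal_0}(\mor_0(V,-)\smashprod A, E) \cong \Top(A, E(V))$ then immediately yields the objectwise characterisation of fibrations and weak equivalences, and Quillen's small object argument supplies the functorial factorisations once the appropriate smallness is in hand (see below).

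Properness is then essentially automatic. Every cofibration is a retract of a relative cell complex built from the generating cofibrations, each cell attachment of which is an objectwise cofibration of based spaces; since limits and colimits in the enriched functor category $\jcal_0\Top$ are computed objectwise, pullbacks of weak equivalences along fibrations, and pushouts of weak equivalences along cofibrations, can be checked objectwise. Properness of $\ecal_0$ therefore reduces directly to the known properness of $\Top$. The topological enrichment statement follows once the pushout--product axiom is verified for the generating (acyclic) cofibrations, which again reduces to the corresponding (standard) statement for $\Top$ via the enriched Yoneda formula.

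The main obstacle is verifying cellularity in the sense of \cite[Definition 12.1]{hir03}. One must show that the domains (and codomains) of the generating cofibrations are compact relative to the cofibrations, that the domains of the generating acyclic cofibrations are small relative to the cofibrations, and that every cofibration is an effective monomorphism. For the first two, the natural isomorphism $\nat_{\ecal_0}(\mor_0(V,-) \smashprod A, -) \cong \Top(A, \mathrm{ev}_V(-))$ together with the fact that $\mathrm{ev}_V$ preserves filtered colimits reduces the question to the compactness/smallness of the spaces $S^{n-1}_+$, $D^n_+$ and $(D^n \times [0,1])_+$ in $\Top$, which is classical. The effective monomorphism condition is the most delicate: cofibrations in $\Top_*$ are effective monomorphisms (equalisers of their cofibre pairs), and since equalisers and the cofibre-pair construction in $\ecal_0$ are both computed objectwise, an objectwise cofibration is objectwise an effective monomorphism and hence an effective monomorphism in $\ecal_0$. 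Any cofibration in $\ecal_0$ is a retract of a relative cell complex, which is in particular objectwise a cofibration of based spaces, so this argument applies. Assembling these three conditions gives the cellular structure.
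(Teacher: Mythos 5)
Your proposal is correct and follows essentially the same route as the paper, which simply cites \cite[Theorem 6.5]{mmss01} for the levelwise (projective) structure and describes the cellularity verification as a routine but technical argument. Your sketch of that cellularity check---reducing compactness and smallness to the evaluation functors via the Yoneda adjunction and noting that cofibrations are objectwise closed inclusions, hence effective monomorphisms computed objectwise---is exactly the argument the paper leaves implicit.
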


\begin{lemma}\label{lem:coffunct}
The functors $S \gamma_{n}(V,-)_+$ and $\mor_n (V,-)$ are cofibrant
objects of $\ecal_0$, for $n \geqslant 0$. 
\end{lemma}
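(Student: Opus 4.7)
The plan is to combine the explicit descriptions of $S\gamma_n(V,-)_+$ and $\mor_n(V,-)$ recalled in this section with the fact that $\ecal_0$ with the projective structure is a topological (hence pointed, cofibrantly generated) model category. I would begin with the observation that every representable $\mor_0(V,-)$ is cofibrant: the generating cofibration with $n=0$ is $* = \mor_0(V,-) \smashprod S^{-1}_+ \to \mor_0(V,-) \smashprod D^0_+ = \mor_0(V,-)$, so the representable is built from the initial object by a single cell attachment. More generally, because $\mor_0(V,-) \smashprod (-)$ is a left Quillen functor from based topological spaces to $\ecal_0$, any $\mor_0(V,-) \smashprod K$ with $K$ a CW complex is cofibrant. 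The case $n = 0$ of the lemma is then immediate, since $\mor_0(V,-)$ is cofibrant by the above and $S\gamma_0(V,-)_+$ is the constant functor at the basepoint (the unit sphere bundle of a $0$-dimensional vector bundle is empty), hence the initial object of $\ecal_0$.

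For $n \geqslant 1$, I would treat $S\gamma_n(V,-)_+$ first, using the isomorphism recalled earlier in the section,
\[
S\gamma_n(V,-)_+ \;\cong\; \underset{0 \neq U \subset \rr^n}{\hocolim}\; \mor_0(U \oplus V, -),
\]
where the indexing category is the topological category of non-zero subspaces of $\rr^n$ and inclusions. The Bousfield--Kan bar construction presents this hocolim as the geometric realisation of a simplicial object in $\ecal_0$ whose level $k$ is a wedge, over chains of length $k$ in the indexing category, of functors of the form $\mor_0(U \oplus V, -) \smashprod K_k$ with each $K_k$ a based CW complex. Each level is thus cofibrant by the preceding observation, the resulting simplicial object is Reedy cofibrant, and its enriched realisation is therefore cofibrant in $\ecal_0$.

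The cofibrancy of $\mor_n(V,-)$ then follows from Proposition \ref{prop:sgammatojn} (applied with $n$ in place of $n+1$), which identifies $\mor_n(V,-)$ naturally in $V$ as the mapping cone in $\ecal_0$ of the projection $S\gamma_n(V,-)_+ \to \mor_0(V,-)$:
\[
\xymatrix{
S\gamma_n(V,-)_+ \ar[r] \ar[d] & \mor_0(V,-) \ar[d] \\
C\, S\gamma_n(V,-)_+ \ar[r] & \mor_n(V,-).
}
\]
The left vertical map is obtained from the cofibrant object $S\gamma_n(V,-)_+$ by tensoring with the standard cofibration $S^0 \hookrightarrow [0,\infty]$ of based spaces, so it is a cofibration between cofibrant objects in the topological projective model structure. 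Since $\mor_0(V,-)$ is also cofibrant, the pushout $\mor_n(V,-)$ is cofibrant, as required. The main technical obstacle is the (standard but notationally heavy) verification that the Bousfield--Kan hocolim of a diagram of representables, indexed over a topological category, produces a cofibrant object of $\ecal_0$; this rests on the enriched realisation of a Reedy cofibrant simplicial object in $\ecal_0$ being cofibrant, which in turn follows from $\ecal_0$ being a topological model category.
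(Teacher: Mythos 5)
Your deduction of the cofibrancy of $\mor_n(V,-)$ from that of $S\gamma_n(V,-)_+$ is fine and is exactly the paper's mapping-cone argument (tensor the cofibrant object $S\gamma_n(V,-)_+$ with $S^0 \hookrightarrow [0,\infty]$ and push out along the projection to $\mor_0(V,-)$), as is your observation that representables are cofibrant and your treatment of $n=0$. The gap is in the step you rely on for $S\gamma_n(V,-)_+$ itself. The homotopy colimit is indexed over the \emph{topological} category of non-zero subspaces of $\rr^{n}$ and inclusions: its space of $k$-chains is a union of flag manifolds, not a discrete set. Consequently, level $k$ of the Bousfield--Kan bar construction is not a wedge, over chains, of objects $\mor_0(U\oplus V,-)\smashprod K_k$ with $K_k$ a CW complex; it is a twisted, parametrised family of corepresentables over the chain space (on a stratum of fixed dimension sequence it has the form of a balanced product $O(n)_+\smashprod_{H}\mor_0(U\oplus V,-)$, with $H$ a flag stabiliser acting through $O(U)$). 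So the reason you give for the levels being cofibrant, and for Reedy cofibrancy of the simplicial object, does not apply; establishing those facts for these twisted levels is precisely the technical content you defer as ``standard but notationally heavy'', and without it the central claim of the lemma is not proved.

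The paper avoids the bar construction altogether. By definition $\nat_{\ecal_0}(S\gamma_{n+1}(V,-)_+,E)=(\tau_n E)(V)\cong \holim_{0\neq U\subset \rr^{n+1}} E(U\oplus V)$, and this homotopy limit preserves objectwise fibrations and acyclic fibrations by \cite[Theorem 18.5.1]{hir03}. Hence the enriched corepresented functor $\nat_{\ecal_0}(S\gamma_{n+1}(V,-)_+,-)$ sends acyclic fibrations of $\ecal_0$ to acyclic fibrations (in particular surjections) of spaces, which is exactly the lifting property asserting that $\ast\to S\gamma_{n+1}(V,-)_+$ is a cofibration. If you want to keep your homotopy-colimit approach, you would need to carry out the parametrised cell analysis of the bar construction over the topological poset (as in \cite[Appendix A]{lind}); otherwise the right-adjoint argument above is the short route, and it also handles all $n$ uniformly.
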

\begin{proof}
The homotopy limit used to construct 
$\tau_n$ preserves objectwise
fibrations and acyclic fibrations by 
\cite[Theorem 18.5.1]{hir03}.  
It follows that $S \gamma_{n}(V,-)_+$
is cofibrant. Since $\mor_{n+1}(V,-)$ is the mapping cone
of a map between two cofibrant objects,
$S \gamma_{n}(V,-)_+ \to \mor_0(V,-)$, 
it is also cofibrant. 
\end{proof}

\begin{definition}\label{def:tn}
Define $T_n \co \ecal_0 \to \ecal_0$ to be
$$
T_n E = \hocolim
\xymatrix{
E \ar[r]^{\rho_E} &
\tau_n E \ar[r]^{\tau_n \rho_E} &
\tau_n^2 E \ar[r]^{\tau_n^2 \rho_E} &
\dots }$$
The inclusion map
$(\eta_n)_E \co E \to T_n E$
is a natural transformation.
\end{definition}

\begin{definition}
A map $f \in \ecal_0$ is said to be an \textbf{$T_n$--equivalence}
if $T_n f$ is an objectwise weak equivalence.
\end{definition}

We use the functor $T_n: \ecal_0 \to \ecal_0$ to construct a new model structure
on $\ecal_0$. The method is known as Bousfield-Friedlander localisation with respect to $T_n$. Specifically we apply \cite[Theorem 9.3]{bous01} which is an updated and improved version of \cite[Theorem A.7]{bf78}. We will shortly obtain this model structure via a different method, 
but we need this version to see that the new model structure is right proper. 

\begin{proposition}\label{prop:bflocal}
There exists a proper model structure
on $\ecal_0$ such that a map $f$ is a weak equivalence
if and only if $f$ is an $T_n$--equivalence.
The cofibrations are the cofibrations of the projective model structure on $\ecal_0$. The
fibrant objects are precisely the $n$--polynomial objects. A map
$f \co X \to Y$ is a fibration if and only if it is an objectwise fibration and
the diagram below is a homotopy pullback in the projective model structure.
$$\xymatrix{
X \ar[r]^f \ar[d]^\rho & Y \ar[d]^\rho \\
T_n X \ar[r]^{T_n f} \ar[r] & T_n Y
}$$
We call this the \textbf{$n$--polynomial model structure} on $\ecal_0$
and denote it by $n\poly \ecal_0$.
\end{proposition}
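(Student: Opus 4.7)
The plan is to apply Bousfield's Theorem 9.3 from \cite{bous01} directly, which requires verifying three conditions on the coaugmented endofunctor $(T_n, \eta_n)$ on the projective model category $\ecal_0$: (A1) $T_n$ preserves objectwise weak equivalences; (A2) $T_n$ is homotopy idempotent, in the sense that both natural maps $T_n \eta_n, \eta_{T_n} \co T_n E \to T_n T_n E$ are objectwise weak equivalences and agree up to homotopy; and (A3) a cube/pullback axiom guaranteeing that pullbacks of $T_n$-equivalences along fibrations of fibrant objects behave well, which is what yields right properness.

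For (A1), I would observe that $\tau_n$ is built as a homotopy limit over the category of non-zero subspaces of $\rr^{n+1}$, and homotopy limits preserve objectwise weak equivalences (as used already in the proof of Lemma \ref{lem:coffunct}). A sequential homotopy colimit of objectwise weak equivalences is again an objectwise weak equivalence, so $T_n = \hocolim_k \tau_n^k$ preserves objectwise weak equivalences. For (A2), the essential point is that $T_n E$ is itself $n$-polynomial for every $E$. Since each $\tau_n^k E$ is a homotopy limit of values of $E$ at larger vector spaces, iterating and then applying $\tau_n$ once more produces (up to rearrangement of the homotopy limits involved) the same sequential colimit; thus $\eta_{T_n E} \co T_n E \to \tau_n T_n E$ is an objectwise weak equivalence, meaning $T_n E$ is $n$-polynomial, and so $T_n$ acts as an objectwise equivalence on $T_n E$. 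The equality $T_n \eta_n \simeq \eta_{T_n}$ follows formally from the naturality of $\eta_n$.

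For (A3), I would verify the pullback condition by using the description of $\tau_n$ as a (finite-dimensional) homotopy limit: homotopy limits commute with homotopy pullbacks, so applying $\tau_n$ (and hence $T_n$) to a homotopy pullback square produces a homotopy pullback square. This allows one to deduce that the pullback of a $T_n$-equivalence along an objectwise fibration between $n$-polynomial (i.e.\ $T_n$-fibrant) objects remains a $T_n$-equivalence. Together (A1)--(A3) give Bousfield's theorem, producing a right proper model structure whose weak equivalences are the $T_n$-equivalences and whose cofibrations are unchanged from the projective structure; left properness is inherited from $\ecal_0$ since cofibrations and the class of weak equivalences only enlarges.

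Finally I would unpack the characterisations stated in the proposition. The fibrant objects are precisely those $E$ for which $\eta_n \co E \to T_n E$ is an objectwise weak equivalence; by $n$-polynomiality of $T_n E$ and the two-out-of-three property this is equivalent to $E$ being $n$-polynomial (using Definition \ref{def:toppoly}). The stated pullback characterisation of fibrations is exactly the condition produced by Bousfield's construction: $f$ is a fibration if and only if it is a projective fibration and the square with $T_n f$ is a homotopy pullback. I expect the most delicate step to be (A3), since it requires controlling the interaction between $\tau_n$ (a homotopy limit) and honest pullbacks carefully enough to conclude right properness; the other axioms reduce to the fact, already implicit in Weiss, that $T_n E$ is $n$-polynomial and that homotopy limits behave well with respect to weak equivalences.
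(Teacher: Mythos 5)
Your overall strategy is the same as the paper's: apply Bousfield's Theorem 9.3 to the coaugmented functor $(T_n,\eta_n)$, verifying (A1)--(A3), and your treatment of (A1) and (A3) (finite homotopy limits, i.e.\ homotopy pullbacks, commute with directed homotopy colimits, so $T_n$ preserves homotopy pullbacks) matches the paper. However, there is a genuine gap at (A2). You dispose of the key fact --- that $T_n E$ is $n$--polynomial for every $E$ --- with the phrase ``up to rearrangement of the homotopy limits involved.'' This is not a formal rearrangement: it amounts to commuting the homotopy limit over the topological poset of non-zero subspaces of $\rr^{n+1}$ (equivalently the enriched end $\nat_{\ecal_0}(S\gamma_{n+1}(V,-)_+,-)$ defining $\tau_n$) past the sequential homotopy colimit defining $T_n$, and this interchange is exactly the delicate point of the whole construction. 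It is the content of Weiss's Theorem 6.3, whose originally published proof was faulty on precisely this interchange and required the erratum \cite{weisserrata}; the present paper does not reprove it but cites \cite[Theorem 6.3]{weiss95} together with the erratum for (A1) and (A2), and explicitly lists the properties of $T_n$ from Theorem 6.3 among the results it was unable to improve upon. Since your characterisation of the fibrant objects as the $n$--polynomial functors also leans on $T_n E$ being $n$--polynomial, this gap propagates: as written, the proposal assumes the hardest input rather than proving or citing it.

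Two smaller points. First, your justification of left properness (``the class of weak equivalences only enlarges'') is not an argument --- enlarging the weak equivalences enlarges both the hypothesis and the conclusion of the left properness condition --- though properness of the localised structure does come out of Bousfield's theorem (the paper needs this version precisely to get right properness, which is later essential for the right Bousfield localisation defining $n\homog\ecal_0$). Second, to make (A2) honest you must either cite Weiss's Theorem 6.3 and erratum, as the paper does, or give a real argument for the commutation of $\tau_n$ with sequential homotopy colimits, which requires a compactness/connectivity analysis of the homotopy limit over the topological indexing category and is not available by general nonsense.
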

\begin{proof}
We need to show the following axioms:
\begin{itemize}
\item (A1) if $f:X \rightarrow Y$ is an objectwise weak equivalence, then so is $T_n f$;
\item (A2) for each $X \in \ecal_0$, the maps $\eta, T_n \eta : T_n X \rightarrow T^{2}_{n} X$ are weak equivalences;
\item (A3) for a pull back square
$$\xymatrix{
V \ar[r]^k \ar[d]^g & X \ar[d]^f \\
W \ar[r]^{h} \ar[r] &Y
}$$
in $\ecal_0$, if f is a fibration of fibrant objects such that $\eta : X\rightarrow T_n X, \eta : Y \rightarrow T_n Y$, and $ T_n h: T_n W \rightarrow T_n Y$ are weak equivalences, then $T_n k : T_n V \rightarrow T_n X$ is a weak equivalence.
\end{itemize}
Weiss proves axioms A1 and A2 in \cite[Theorem 6.3]{weiss95} 
and \cite{weisserrata}. For A3, recall that finite homotopy limits commute with directed homotopy colimits, and therefore $T_n$ preserves finite homotopy limits and A3 follows.
\end{proof}

It follows immediately that the fibrations of $n \poly \ecal_0$ are precisely those objectwise fibrations that are also $n$--polynomial maps, as defined in \cite[Definition 8.1]{weiss95}. Similarly an $n$--homogeneous map is an $n$--polynomial
map that is a weak equivalence in $(n-1) \poly \ecal_0$. Thus we are justified in
saying that these model categories contain the homotopy theory that Weiss studies.

This new model structure turns the identity functor into
the left adjoint of a Quillen pair from the projective
model structure on $\ecal_0$ to the $n$--polynomial model structure.
\[
\id :
\ecal_0 \overrightarrow{\longleftarrow}
n \poly \ecal_0
: \id
\]
If we let $[-,-]^{np}$ denote maps in the homotopy category of 
$n \poly \ecal_0$ then we see that for $X$ and $Y$ in 
$\ecal_0$
\[
[X,T_n Y] \cong
[X,Y]^{np}
\]

There is another way to obtain the $n$--polynomial model structure, here we use the left Bousfield localisations of \cite{hir03}.  This technique is more modern 
than that of the $T_n$--localisation above. It has the major advantage that we can now
conclude that the model category $n \poly \ecal_0$ is cellular, which we will need shortly. 

\begin{proposition}\label{prop:cellular}
The model category $n \poly \ecal_0$ is 
the left Bousfield localisation of $\ecal_0$ with respect
to the collection below and hence is cellular and topological.
\[
S_n = \{ S\gamma_{n+1}(V,-)_+ \to \mor_0(V,-) | V \in \jcal_0
\}
\]
\end{proposition}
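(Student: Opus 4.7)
The plan is to invoke Hirschhorn's left Bousfield localisation machinery at $S_n$, identify the resulting fibrant objects with the $n$--polynomial functors, and then match this localisation with the model structure of Proposition \ref{prop:bflocal}. Since the projective model structure on $\ecal_0$ is left proper, cellular and topological, \cite[Theorem 4.1.1]{hir03} (in its topological form) produces $L_{S_n}\ecal_0$ as a left proper, cellular and topological model structure with the same cofibrations as $\ecal_0$, whose fibrant objects are exactly the $S_n$--local objects. Concretely, an (automatically fibrant) $E \in \ecal_0$ is $S_n$--local exactly when the map
$$
\nat_{\ecal_0}(\mor_0(V,-), E) \longrightarrow \nat_{\ecal_0}(S\gamma_{n+1}(V,-)_+, E)
$$
induced by $S\gamma_{n+1}(V,-)_+ \to \mor_0(V,-)$ is a weak equivalence for every $V \in \jcal_0$. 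Both source functors are cofibrant by Lemma \ref{lem:coffunct}, so these mapping spaces genuinely compute the derived mapping spaces. The enriched Yoneda lemma identifies the left-hand side with $E(V)$ and the definition of $\tau_n$ identifies the right-hand side with $\tau_n E(V)$; under these identifications the map is precisely $(\rho_n)_E$ at $V$. Hence the $S_n$--local objects are precisely the $n$--polynomial functors of Definition \ref{def:toppoly}.

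Next, $n\poly \ecal_0$ is itself a left Bousfield localisation of $\ecal_0$ in Hirschhorn's sense: by Proposition \ref{prop:bflocal} it has the same cofibrations, contains every weak equivalence of $\ecal_0$, and has the same class of fibrant objects as $L_{S_n}\ecal_0$, namely the $n$--polynomial functors. A left Bousfield localisation of a fixed model category is determined by its collection of fibrant objects, since its weak equivalences are precisely those maps inducing weak equivalences of derived mapping spaces into each such object; cofibrant replacements are inherited from $\ecal_0$ and fibrant replacements may be taken in the common class. Thus $n\poly \ecal_0 = L_{S_n}\ecal_0$, and in particular $n\poly \ecal_0$ is cellular and topological.

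The main obstacle is this matching step. One option is to invoke the uniqueness of a left Bousfield localisation by its fibrant objects as a black box; alternatively, one can verify directly that $T_n$ provides a fibrant replacement in $L_{S_n}\ecal_0$. That $T_n X$ is $n$--polynomial is axiom A2 in the proof of Proposition \ref{prop:bflocal}, and that $X \to T_n X$ is an $S_n$--local equivalence follows by applying the Yoneda identification above at each stage of the telescope defining $T_n$ together with the fact that transfinite composites of $S_n$--local equivalences are $S_n$--local equivalences. With $T_n$ recognised as a fibrant replacement in $L_{S_n}\ecal_0$, an $S_n$--local equivalence is exactly a $T_n$--equivalence, matching the weak equivalences of $n\poly \ecal_0$.
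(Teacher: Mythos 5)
Your first half---invoking \cite[Theorem 4.1.1]{hir03} to build $L_{S_n}\ecal_0$ and then using Lemma \ref{lem:coffunct} plus the Yoneda identification $\nat_{\ecal_0}(\mor_0(V,-),E)\cong E(V)$, $\nat_{\ecal_0}(S\gamma_{n+1}(V,-)_+,E)\cong (\tau_n E)(V)$ to see that the $S_n$--local objects are exactly the $n$--polynomial functors---is precisely the paper's argument. For the matching step the paper does not quote a uniqueness principle; it shows directly that the $S_n$--equivalences are the $T_n$--equivalences, using that $\ecal_0$ is topological and that $[X,\Omega^k Z]\cong[X,\Omega^k Z]^{np}$ for $n$--polynomial $Z$, whence $\pi_*\nat_{\ecal_0}(\cofrep X,Z)\cong\pi_*\nat_{\ecal_0}(\cofrep T_nX,Z)$, i.e.\ $X\to T_nX$ is an $S_n$--equivalence. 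Your ``Route A'' instead appeals to the fact that a model structure with fixed cofibrations is determined by its fibrant objects (equivalently, weak equivalences are detected by homotopy function complexes into fibrant objects, and these complexes agree across the structures because the cofibrations and cosimplicial resolutions do, cf.\ \cite[Chapter 17]{hir03}). That is legitimate and arguably cleaner, but note it buys nothing independent of the paper's inputs: it still needs Proposition \ref{prop:bflocal} (hence Weiss's Theorem 6.3 and the erratum) to know that $n\poly\ecal_0$ exists with those cofibrations and with the $n$--polynomial functors as its fibrant objects, and your stated justification of the uniqueness claim should really be the detection theorem rather than the assertion that the weak equivalences of $n\poly\ecal_0$ ``are'' mapping-space equivalences, since that is exactly what is being proved.

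The alternative ``Route B'' has a genuine gap. The claim that $X\to T_nX$ is an $S_n$--local equivalence because each stage $\tau_n^kX\to\tau_n^{k+1}X$ is one ``by the Yoneda identification'' does not work: the variance is wrong. Yoneda identifies the mapping spaces \emph{into} $X$ from the domains and codomains of $S_n$, which is what characterises local objects; being an $S_n$--local equivalence is a statement about mapping \emph{out} of $X\to\tau_nX$ into every local $Z$, and no formal argument of this kind supplies it---a map whose target happens to be built from local data need not be a local equivalence. Indeed, the assertion that $\rho_X\co X\to\tau_nX$ (or $X \to T_n X$) is an $S_n$--equivalence for all $X$ is essentially the homotopy-idempotence of $T_n$, which is the genuinely hard input (Weiss's Theorem 6.3 and \cite{weisserrata}); the paper obtains it from the already-constructed Bousfield--Friedlander structure via $[X,\Omega^kZ]\cong[X,\Omega^kZ]^{np}$. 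A smaller inaccuracy in the same passage: ``$T_nX$ is $n$--polynomial'' is not literally axiom A2 of Proposition \ref{prop:bflocal}; it is again a consequence of Theorem 6.3, being what makes the fibrant objects of the Bousfield--Friedlander structure the polynomial functors. So keep Route A (with the detection theorem cited), or, if you want Route B, replace the Yoneda-plus-transfinite-composition step by an argument of the paper's type for the single map $X\to T_nX$.
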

\begin{proof}
We know that $\ecal_0$ is left proper and cellular, 
hence \cite[Theorem 4.1.1]{hir03} applies and we see that
the localisation $L_{S_n} \ecal_0$ exists and is left proper, cellular and topological. 

The cofibrations are those from $\ecal_0$, the fibrant objects and 
weak equivalences are defined in terms of homotopy mapping objects 
as we describe below. Note that a weak equivalence between 
fibrant objects is precisely an objectwise weak equivalence.

If we can show that $n \poly \ecal_0$ has the same weak equivalences
as $L_{S_n} \ecal_0$, then we will know that these model structures are precisely the 
same and hence cellular and proper. 

Since $\ecal_0$ is enriched over topological spaces and all objects are fibrant,
the homotopy mapping object from $A$ to $B$
is given by the enrichment $\nat_{\ecal_0}(\cofrep A, B)$
where $\cofrep$ denotes cofibrant replacement. 
See \cite[Example 17.2.4]{hir03} for more details.

The domains and codomains of $S_n$ are cofibrant by lemma \ref{lem:coffunct}, hence 
the fibrant objects of $L_{S_n} \ecal_0$ are those $X$
such that 
\[
(\rho_n)_X(V) X(V) = \nat_{\ecal_0} (\mor_0(V,-),X) \to 
\nat_{\ecal_0} (S\gamma_{n+1}(V,-)_+,X) = (\tau_n X)(V)
\]
is a weak homotopy equivalence of based spaces for all $V$. 
So we see that the 
fibrant objects are precisely the $n$--polynomial objects. 
The weak equivalences of $L_{S_n} \ecal_0$ are those maps 
$f \co X \to Y$ such that the induced 
\[
\nat_{\ecal_0} (\cofrep Y,Z) \to 
\nat_{\ecal_0} (\cofrep X,Z)
\]
gives a weak homotopy equivalence of spaces 
whenever $Z$ is $n$--polynomial. 
Recall that $[-,-]$ denotes maps in the homotopy category of 
$\ecal_0$ and 
$[-,-]^{np}$ denotes maps in the homotopy category of 
$n \poly \ecal_0$. 
Since $\ecal_0$ is a topological model category 
we can relate the homotopy groups of 
$\nat_{\ecal_0} (-,-)$ to maps in the homotopy category of
$\ecal_0$:
\[
\pi_n \nat_{\ecal_0} (\cofrep X,Z)
\cong
[X,\Omega^n Z]
\cong
[X,\Omega^n Z]^{np}
\]
where $Z$ is $n$--polynomial. 
Since $X \to T_n X$ is a $T_n$ equivalence
it follows that 
\[
\pi_* \nat_{\ecal_0} (\cofrep X,Z)
\cong 
\pi_* \nat_{\ecal_0} (\cofrep T_n X,Z)
\]
Hence the map $\eta \co X \to T_n X$ is a weak equivalence of $L_{S_n} \ecal_0$
and so the collection of $S_n$--equivalences is precisely the class
of $T_n$--equivalences. 
\end{proof}

Now that we know the $S_n$--equivalences
are the weak equivalences for the $n$--polynomial
model structure, we can give a proof that 
an $(n-1)$--polynomial object is $n$--polynomial. 
The following result is a reproduction of 
\cite[Proposition 5.4]{weiss95}. 
We note that 
the `$E$--substitutions' of the reference
are precisely $S_{n-1}$--equivalences and 
hence are $T_{n-1}$--equivalences. 

\begin{proposition}\label{prop:mpolynpoly}
If $E \in \ecal_0$ is polynomial of degree at most $n-1$,
then it is polynomial of degree at most $n$.
\end{proposition}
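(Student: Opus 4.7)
My plan is to reduce the problem to showing that $\ind_0^{n+1} E$ is objectwise contractible, and to obtain this by propagating Corollary \ref{cor:npolydiff} up one derivative. By Lemma \ref{lem:inductionandtau} there is a natural fibration sequence
\[
\res_0^{n+1}\ind_0^{n+1} E(V) \longrightarrow E(V) \longrightarrow \tau_n E(V),
\]
so once $\ind_0^{n+1} E$ is objectwise contractible the map $(\rho_n)_E$ is an objectwise weak equivalence, and then $E$ is polynomial of degree at most $n$ by Definition \ref{def:toppoly}.

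Since the inclusions of enriched categories compose as $i_0^{n+1} = i_n^{n+1} \circ i_0^n$, the restriction functors factor as $\res_0^{n+1} = \res_0^n \circ \res_n^{n+1}$, and passing to right adjoints yields $\ind_0^{n+1} = \ind_n^{n+1} \circ \ind_0^n$. As $E$ is $(n-1)$-polynomial, Corollary \ref{cor:npolydiff} applied with $n$ replaced by $n-1$ shows that $\ind_0^n E$ is an objectwise acyclic object of $\ecal_n$. The task therefore reduces to showing that $\ind_n^{n+1}$ sends an objectwise acyclic $\ecal_n$-space to an objectwise acyclic $\ecal_{n+1}$-space.

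To see this I would use the formula $(\ind_n^{n+1} F)(V) = \nat_{\ecal_n}(\mor_{n+1}(V,-), F)$ together with the standard fact that in the projective model structure on $\ecal_n$ (where every object is fibrant) the enriched natural-transformation object $\nat_{\ecal_n}(A, F)$ is contractible whenever $A$ is cofibrant and $F$ is objectwise contractible. Applying this requires $\mor_{n+1}(V,-)$ to be cofibrant as an $\ecal_n$-space, which is the $\ecal_n$-analog of Lemma \ref{lem:coffunct}: by Proposition \ref{prop:jntojn} the $\ecal_n$-space $\mor_{n+1}(V,-)$ is the mapping cone of a map between the manifestly cofibrant $\ecal_n$-spaces $\mor_n(\rr \oplus V,-) \smashprod S^n$ and $\mor_n(V,-)$.

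The main obstacle is this final cofibrancy verification: because the cofibre sequence of Proposition \ref{prop:jntojn} is stated pointwise in the right variable, one must check that the pushout presenting $\mor_{n+1}(V,-)$ is genuinely a pushout of $\ecal_n$-spaces rather than merely a pushout of spaces at each object. This is routine since based pushouts in the projective structure on $\ecal_n$ are computed objectwise, after which the chain from Corollary \ref{cor:npolydiff} to the acyclicity of $\ind_n^{n+1}(\ind_0^n E)$, and from Lemma \ref{lem:inductionandtau} to the $n$-polynomiality of $E$, assembles into the desired conclusion.
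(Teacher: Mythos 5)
Your argument does correctly establish the intermediate statement that $\ind_0^{n+1}E$ is objectwise acyclic: the factorisation $\ind_0^{n+1}=\ind_n^{n+1}\circ\ind_0^n$, Corollary \ref{cor:npolydiff} with $n$ replaced by $n-1$, the cofibrancy of $\res_n^{n+1}\mor_{n+1}(V,-)$ in $\ecal_n$ via Proposition \ref{prop:jntojn}, and the fact that $\nat_{\ecal_n}(A,-)$ with $A$ cofibrant preserves objectwise acyclicity (all objects being fibrant) are all sound. The gap is in the opening reduction. Lemma \ref{lem:inductionandtau} identifies $\res_0^{n+1}\ind_0^{n+1}E(V)$ only with the homotopy fibre of $(\rho_n)_E(V)\co E(V)\to\tau_n E(V)$ over the basepoint, and contractibility of that single fibre does not make the map an objectwise weak equivalence: it need not be surjective on $\pi_0$, and nothing controls the components of $\tau_n E(V)$ away from the basepoint. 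Concretely, for $n=0$ let $E(0)=\ast$ and $E(V)=S^0$ for $V\neq 0$, with structure maps sending the non-basepoint to the non-basepoint; then $\ind_0^{1}E$ is objectwise contractible (every homotopy fibre over the basepoint is a point), yet $E(0)\to\tau_0E(0)\simeq E(\rr)$ is not a weak equivalence, so $E$ is not $0$--polynomial. Thus ``vanishing $(n+1)^{st}$ derivative'' is strictly weaker than ``$n$--polynomial'', and your proof never brings the hypothesis that $E$ is $(n-1)$--polynomial to bear on this $\pi_0$/basepoint issue. The paper's tool for upgrading an acyclic-fibre statement to an equivalence is Lemma \ref{lem:npolyfibre}, but its hypotheses---both functors $n$--polynomial and the target connected at infinity---are precisely what you cannot assume here without circularity.

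For comparison, the paper's proof of Proposition \ref{prop:mpolynpoly} avoids derivatives altogether and works at the level of the localising sets: it shows that each map $S\gamma_{n+1}(V,-)_+\to\mor_0(V,-)$ in $S_n$ is an $S_{n-1}$--equivalence, by writing $S\gamma_{n+1}(V,-)_+$ as a homotopy pushout coming from the fibrewise join decomposition of the sphere bundle, with the comparison maps controlled by elements of $S_{n-1}$. An $(n-1)$--polynomial functor is $S_{n-1}$--local, hence $S_n$--local, hence $n$--polynomial. To salvage your outline you would need an argument of this localisation type, or some independent control of $\pi_0\tau_nE$; the objectwise acyclicity of $\ind_0^{n+1}E$ alone is a necessary but not sufficient condition.
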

\begin{proof}
What we will actually show is that 
any $S_{n}$--equivalence is an $S_{n-1}$--equivalence.
Thus we must prove that 
$S\gamma_{n+1}(V,W) \to \mor_0(V,-)$
is an $S_{n-1}$--equivalence for any $V$.  
By the two-out-of-three property
we can reduce this to proving that the map $\alpha$ below is an
$S_{n-1}$--equivalence. 

The standard inclusion $\rr^n \to \rr^{n+1}$
induces a map of vector bundles
$\gamma_{n}(V,W) \to \gamma_{n+1}(V,W)$
and hence a map of their respective
unit sphere bundles. 
\[
\alpha \co S\gamma_{n}(V,-)_+  \to S \gamma_{n+1}(V,-)_+
\] 
We can write $S \gamma_{n+1}(V,-)_+$ as the fibrewise product
over $\mor_0(V,-)$ (denoted $\boxtimes$) of 
$S \gamma_{n}(V,-)_+$ and 
$S \gamma_{1}(V,-)_+$. 
Thus we can write 
$S \gamma_{n+1}(V,-)_+$ as the homotopy
pushout of the diagram
\[
S \gamma_{n}(V,-)_+
\overset{p_1}{\longleftarrow}
S \gamma_{n}(V,-)_+
\boxtimes
S \gamma_{1}(V,-)_+  
\overset{p_2}{\longrightarrow} 
S \gamma_{1}(V,-)_+
\]
Writing 
$\epsilon^n$ for the $n$--dimensional
trivial bundle, 
we see that there is a pullback square
\[
\xymatrix{
(\epsilon^n \oplus \gamma_n (\rr \oplus V, -) )_+ 
\ar[r] \ar[d] &
\gamma_{n}(V,-)_+ \ar[d] \\
\mor_0(\rr \oplus V, -) \ar[r] & 
\mor_0(V,-)
}
\]
The map $p_2$ can then be identified as 
the projection map 
\[
S(\epsilon^n \oplus \gamma_n (\rr \oplus V, -) )_+
\longrightarrow
\mor_0(\rr \oplus V, -)
\]
Hence the vector bundle 
$S \gamma_{n+1}(V,-)_+$ is the homotopy pushout of 
\[
S \gamma_{n}(V,-)_+
\longleftarrow
S( \epsilon^n \oplus \gamma_n (\rr \oplus V, -))_+  
\overset{p_2}{\longrightarrow} 
\mor_0(\rr \oplus V, -)
\]
If $p_2$ is an $S_{n-1}$--equivalence,
then so is its homotopy pushout, which is $\alpha$. 
The unit sphere of a Whitney sum of vector bundles
is equal to the fibrewise join of the unit sphere bundles.
Hence the domain of $p_2$
can be written as the homotopy pushout  
\[
S \gamma_{n}(\rr \oplus V,-)_+
\longleftarrow
S^{n-1}_+ \smashprod 
S \gamma_{n}(\rr \oplus V,-)_+ 
\overset{\delta}{\longrightarrow} 
S^{n-1}_+ \smashprod 
\mor_0(\rr \oplus V,-)
\]
The map $\delta$ is an $S_{n-1}$--equivalence, hence
the top map in the commutative diagram 
below is an $S_{n-1}$--equivalence.
\[
\xymatrix{
S \gamma_{n}(\rr \oplus V,-)_+ 
\ar[r] \ar[dr] &
S( \epsilon^n \oplus \gamma_n (\rr \oplus V, -)_+ 
\ar[d]^{p_2} \\
& 
\mor_0(\rr \oplus V,-)
}
\]
Since the diagonal map is an element of $S_{n-1}$, 
it follows that $p_2$ is an $S_{n-1}$--equivalence, as desired. 
\end{proof}

\begin{corollary}
The identity functor is the left adjoint of a Quillen pair from the $n$--polynomial
model structure to the $m$--polynomial model structure, for $m < n$.
\[
\id :
n \poly \ecal_0 \overrightarrow{\longleftarrow}
m \poly \ecal_0
: \id
\]
\end{corollary}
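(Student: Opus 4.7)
The plan is to verify the two conditions for a Quillen adjunction: that $\id \co n\poly\ecal_0 \to m\poly\ecal_0$ preserves cofibrations and preserves acyclic cofibrations. By Proposition \ref{prop:cellular}, both model structures arise as left Bousfield localisations of the projective model structure on $\ecal_0$, so they share the same class of cofibrations; preservation of cofibrations is therefore automatic.

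Since cofibrations coincide, preservation of acyclic cofibrations reduces to showing that every weak equivalence of $n\poly\ecal_0$ is a weak equivalence of $m\poly\ecal_0$, that is, every $S_n$--equivalence is an $S_m$--equivalence whenever $m < n$. I would argue by induction on $n - m$, so the essential case is $m = n - 1$. This is exactly what the proof of Proposition \ref{prop:mpolynpoly} establishes: there the generating map $S\gamma_{n+1}(V,-)_+ \to \mor_0(V,-)$ of $S_n$ is shown to be an $S_{n-1}$--equivalence, by exhibiting it as a homotopy pushout of maps built from the generators of $S_{n-1}$. The universal property of left Bousfield localisation then yields that every $S_n$--equivalence is an $S_{n-1}$--equivalence, and iterating supplies the general case.

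There is no real obstacle to overcome; the corollary is essentially a repackaging of Proposition \ref{prop:mpolynpoly} in the language of Quillen pairs. Conceptually one can also view the same fact at the level of fibrant objects: Proposition \ref{prop:mpolynpoly} says that the $m$--polynomial functors are contained in the $n$--polynomial functors for $m < n$, and under the common cofibration structure this containment is precisely the criterion for $m\poly\ecal_0$ to be realisable as a further left Bousfield localisation of $n\poly\ecal_0$, which is the same as asserting that the identity is left Quillen in the required direction.
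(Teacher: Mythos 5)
Your proposal is correct and matches the paper's (implicit) argument: the corollary is stated as an immediate consequence of Proposition \ref{prop:mpolynpoly}, whose proof shows precisely that every $S_n$--equivalence is an $S_{n-1}$--equivalence, and since all these localisations share the projective cofibrations, the identity is left Quillen. Your induction on $n-m$ and the fibrant-object reformulation are just explicit packaging of the same reasoning.
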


To define a homotopy theory for $n$--homogeneous functors, we construct a right Bousfield localisation of the $n$--polynomial structure.

\begin{proposition}
There is a topological model structure on $\ecal_0$
whose cofibrant-fibrant objects are precisely the class of 
$n$--homogeneous objects that are cofibrant
in the projective model structure in $\ecal_0$. 
We denote this model structure
$n \homog \ecal_0$. There is a Quillen pair
\[
\id :
n \homog \ecal_0 
\overrightarrow{\longleftarrow}
n \poly \ecal_0
: \id
\]
The fibrations of $n \homog \ecal_0$
are the same as those for 
$n \poly \ecal_0$.
The weak equivalences of 
$n \homog \ecal_0$ are those maps $f$
such that $\res_0^n \ind_0^n T_n f$ is an
objectwise weak equivalence in 
$\ecal_0$.  
\end{proposition}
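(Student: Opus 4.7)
The plan is to construct $n \homog \ecal_0$ as a right Bousfield localization of the $n$--polynomial model structure. By proposition \ref{prop:cellular}, $n \poly \ecal_0$ is right proper, cellular, and topological, so Hirschhorn's theorem (\cite[Theorem 5.1.1]{hir03}) applies. I would right Bousfield localize at the set
\[
K_n = \{\, \mor_n(V,-) \mid V \text{ in a skeleton of } \jcal_0 \,\},
\]
whose members are cofibrant by lemma \ref{lem:coffunct}. This produces the desired model structure, which inherits right properness and the topological enrichment from $n \poly \ecal_0$ and keeps the same fibrations. The Quillen pair $\id : n \homog \ecal_0 \rightleftarrows n \poly \ecal_0 : \id$ is then a standard consequence of right Bousfield localization, since the cofibrations of the localization are contained in those of $n \poly \ecal_0$ while the two structures share acyclic cofibrations.

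To recover the description of the weak equivalences, note that since $n \poly \ecal_0$ is topological and $\mor_n(V,-)$ is cofibrant, the derived mapping space out of $\mor_n(V,-)$ into $X$ is computed by the enriched hom $\nat_{\ecal_0}(\mor_n(V,-), T_n X)$, with $T_n$ providing fibrant replacement in $n \poly \ecal_0$ (see definition \ref{def:tn}). By the adjunction defining induction, this equals $(\ind_0^n T_n X)(V)$, whose underlying space is $(\res_0^n \ind_0^n T_n X)(V)$. Hence $f$ is a $K_n$--colocal equivalence precisely when $\res_0^n \ind_0^n T_n f$ is an objectwise weak equivalence in $\ecal_0$.

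The main obstacle is identifying the cofibrant--fibrant objects as the projectively cofibrant $n$--homogeneous functors. Fibrant objects coincide with those of $n \poly \ecal_0$ and so are $n$--polynomial, while cofibrations of the localization are in particular projective cofibrations of $\ecal_0$; the real work is showing that $K_n$--colocal cofibrancy among these is equivalent to $T_{n-1} X \simeq *$. For one direction, any $K_n$--cellular object inherits trivial $T_{n-1}$ from its building blocks, so it suffices to show $T_{n-1}\mor_n(V,-) \simeq *$, which I expect to verify by writing $T_{n-1}$ as an iterated homotopy limit over non--zero subspaces of $\rr^n$ and exploiting the Thom space structure from proposition \ref{prop:jntojn}. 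The converse requires showing that an $n$--homogeneous projectively cofibrant $X$ makes $\nat_{\ecal_0}(X,-)$ send $K_n$--colocal equivalences between fibrant objects to weak equivalences; this should reduce, via proposition \ref{prop:npolynstable} identifying the relevant $n$--th derivative information, to an application of lemma \ref{lem:npolyfibre}.
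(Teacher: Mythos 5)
Your overall strategy is the paper's own: right Bousfield localise $n \poly \ecal_0$ (right proper and cellular by proposition \ref{prop:cellular}) at the set $K_n = \{\mor_n(V,-)\}$ using \cite[Theorem 5.1.1]{hir03}, and identify the colocal equivalences through the homotopy mapping object $\nat_{\ecal_0}(\mor_n(V,-), T_n X) \cong (\ind_0^n T_n X)(V)$; the remarks about fibrations, shared acyclic cofibrations and the Quillen pair $\id \colon n \homog \ecal_0 \rightleftarrows n \poly \ecal_0 \colon \id$ are all correct and match the paper. Where you diverge is the identification of the cofibrant--fibrant objects, and this is where the proposal is incomplete. For the direction ``cofibrant--fibrant implies $n$--homogeneous'' the paper does not argue cell-by-cell: it observes that $\ast \to T_{n-1}A$ is a $K_n$--colocal equivalence (corollary \ref{cor:npolydiff} together with proposition \ref{prop:mpolynpoly}), so colocality of $A$ gives $[A, T_{n-1}A]=0$, and then the retraction trick $[T_{n-1}A, T_{n-1}A] \cong [A, T_{n-1}A]$ forces $T_{n-1}A \simeq \ast$. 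Your cellular-induction alternative can be made to work, but as stated it is under-justified: for the cells, the clean argument is not the iterated homotopy limit computation you propose but proposition \ref{prop:sgammatojn}, which exhibits $\mor_n(V,-)$ as the homotopy cofibre of $S\gamma_n(V,-)_+ \to \mor_0(V,-)$, a map belonging to $S_{n-1}$, hence $T_{n-1}\mor_n(V,-) \simeq \ast$; and you still need to check that $T_{n-1}$--acyclicity is preserved by the transfinite homotopy pushouts, smashes with cells and retracts used to build colocal objects ($\tau_{n-1}$ is a homotopy limit, so this is not automatic and must be routed through left properness of $(n-1)\poly \ecal_0$).

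The genuine gap is the converse: showing that a projectively cofibrant $n$--homogeneous $B$ is $K_n$--colocal does not ``reduce to an application of lemma \ref{lem:npolyfibre}'' as you suggest. Lemma \ref{lem:npolyfibre} carries a connected-at-infinity hypothesis that has to be verified, and this is exactly where the work lies. The paper's argument takes the colocal (cofibrant) replacement $f \colon \cofrep B \to B$ in the localised structure, notes both objects are $n$--polynomial and that $\res_0^n \ind_0^n f$ is an objectwise equivalence, and then studies the homotopy fibre $D$: one needs that $\tau_{n-1}$ commutes with sequential homotopy colimits (\cite[Lemma 5.14]{weiss95}) and that homogeneous functors are contractible at infinity in order to apply lemma \ref{lem:npolyfibre} twice, first to conclude $D$ is $(n-1)$--polynomial and objectwise contractible, then to conclude $f$ is an objectwise weak equivalence, whence $B$ is colocal. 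Proposition \ref{prop:npolynstable} does not supply this; it is used elsewhere (to show $\ind_0^n \varepsilon^*$ preserves fibrant objects) and does not by itself give colocality of an arbitrary cofibrant $n$--homogeneous functor. As it stands, the hardest half of the proposition is asserted rather than proved.
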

\begin{proof}
We produce this model category by a right Bousfield localisation 
of $n \poly \ecal_0$ at the 
collection of objects 
\[
K_n= \{ \mor_n(V,-) | V \in \jcal_0 \}
\]
Since $n \poly \ecal_0$ is right proper and cellular, we are able to use 
\cite[Theorem 5.1.1]{hir03} to see that this localisation, 
$R_{K_n}( n \poly \ecal_0)$, exists.

The weak equivalences of this model category
are called $K_n$--cellular equivalences and are 
defined in terms of homotopy mapping objects. 
By \cite[Example 17.2.4]{hir03} we see that 
if $A$ is cofibrant in $\ecal_0$ then 
the homotopy mapping object from $A$ to $X$ is given by 
$\nat_{\ecal_0}(A, T_n X)$.
This is different from the homotopy mapping object used previously, 
as we now want a homotopy mapping object for $n \poly \ecal_0$. 

A map $f \co X \to Y$ is a \textbf{$K_n$--cellular equivalence} in 
$R_{K_n} ( n \poly \ecal_0)$ if and only if 
\[
\xymatrix{
(\ind_0^n T_n X)(V) \ar[r]^(0.4)= &
\nat_{\ecal_0}(\mor_n(V,-), T_n X)
\ar[d]^{T_n f_*} \\ 
(\ind_0^n T_n Y)(V) \ar[r]^(0.4)= &
\nat_{\ecal_0}(\mor_n(V,-), T_n Y) 
}
\]
is a weak equivalence for all $V$. 
Hence we have proven that 
the weak equivalences of this new model structure 
are those maps $f$ such that $\res_0^n \ind_0^n T_n f$
is an objectwise weak equivalence of $\ecal_0$.
Furthermore, any $(n-1)$--polynomial
object is trivial in this new model structure. 

An object $A$ is cofibrant in $R_{K_n} ( n \poly \ecal_0)$
if and only if it is cofibrant in $\ecal_0$ and 
for any $K_n$--cellular equivalence $X \to Y$, the induced map 
\[
\nat_{\ecal_0}(A, T_n X) 
\longrightarrow
\nat_{\ecal_0}(A, T_n Y)
\]
is a weak equivalence of spaces. 
The general theory of right localisations tell us that 
a $K_n$--cellular equivalence between cofibrant objects is a 
$T_n$--equivalence.

Now we show that the 
cofibrant--fibrant objects are
$n$--homogeneous.
Let $A$ be cofibrant and fibrant in 
$R_{K_n} ( n \poly \ecal_0)$. Then 
$\ast \to T_{n-1} A$ is a 
$K_n$--cellular equivalence, 
 and hence we have a weak equivalence of spaces
\[
\nat_{\ecal_0}(A, \ast) 
\longrightarrow
\nat_{\ecal_0}(A, T_{n-1} A)
\]
Thus we have isomorphisms
\[
0 =
[A, T_{n-1} A]
\cong 
[A, T_{n-1} A]^{(n-1)p}
\cong
[T_{n-1} A, T_{n-1} A]^{(n-1)p}
\cong
[T_{n-1} A, T_{n-1} A]
\]
This tells us that $T_{n-1} A$ is objectwise contractible. 
Since $A$ is fibrant, we know it is $n$--polynomial
and we now know that $A$ is $n$--homogeneous.

Now we must show that if $B$ is a cofibrant 
object of $\ecal_0$ which is $n$--homogeneous,
then it is cofibrant and fibrant in 
$n \homog \ecal_0$. We immediately see 
that $B$ is fibrant, as it is $n$--polynomial.
So consider the cofibrant replacement
of $B$ in the $n$--homogeneous model structure
$f \co \cofrep B \to B$. Since the codomain is 
fibrant, so is the domain and thus we know that 
$\res_0^n \ind_0^n f$ is an objectwise weak equivalence. 
We must prove that $f$ itself is an objectwise
weak equivalence. We do so by adapting some of 
the argument of \cite[Corollary 5.13]{weiss95}.

The homotopy fibre of $f$, which we call $D$, 
is $n$--polynomial and $\res_0^n \ind_0^n D$ is 
objectwise contractible.
Consequently, the map 
$D \to \tau_{n-1} D$ has trivial fibre
and both domain and codomain are 
$n$--polynomial. 
If we can show that $\tau_{n-1} D$
is connected at infinity, 
then we can conclude that $D$ is ${n-1}$--polynomial
by lemma \ref{lem:npolyfibre}.
This result boils down to showing that 
$\tau_{n-1}$ commutes with sequential homotopy colimits 
then noting that $D$ is connected (in fact contractible)
at infinity as $D$ is $n$--homogeneous. This commutation result
is simply a calculation and occurs
as \cite[Lemma 5.14]{weiss95}.

The functor $D$ is 
$n$--homogeneous as the sequential homotopy colimit
used to define $T_{n-1}$ will commute 
with the homotopy pullback used to define $D$. 
Hence $T_{n-1} D$ is objectwise contractible.
Since $D$ is $(n-1)$--polynomial, 
$D$ is levelwise equivalent to $T_{n-1} D$.
Thus $D$ is objectwise contractible. 
Another application of lemma \ref{lem:npolyfibre}
shows that $f$ is a objectwise weak equivalence
as desired. 
\end{proof}

\begin{rmk}
The weak equivalences for the $n$--homogeneous model structure 
are determined by the functor $\ind_0^n$. 
So is reasonable to ask if a weak equivalence
of $n$--polynomial objects is determined by the functors
$\ind_0^k$ for $0 \leqslant k \leqslant n$. 
The best approximation to this idea is 
\cite[Theorem 5.15]{weiss95}, which describes 
$T_n$--equivalences in terms of something akin to 
parametrised spectra. It would be valuable and interesting
to see if the techniques of \cite{ms06}
can be applied to expand upon this result. 
\end{rmk}

It is fascinating to see how closely the sets $S_n$ and $K_n$ are related to 
orthogonal spectra. The set $S_n$ comes from the cofibre sequence
of proposition \ref{prop:sgammatojn}. 
Inverting $S_n$ has the effect of 
killing the objects of $K_{n+1}$. As we will see shortly, 
the set of generating cofibrations for the $n$--stable 
model structure on $O(n) \ecal_n$ is also a localisation,
which has also killed the elements of $K_{n+1}$. Except that 
in this case we will use the cofibre sequence 
of proposition \ref{prop:jntojn}.

The case $n=0$ is much simpler than the rest. 
In particular, the $0$--homogeneous
model structure is equal to the $0$--polynomial model structure. 
A fibrant object in the $0$--polynomial model structures
is a homotopically constant
$\jcal_0$--space: the maps $X(V) \to X(V \oplus W)$ are weak equivalences
of spaces for all $V$ and $W$. 
A $T_0$--equivalence is a map $f \in \ecal_0$ such that
$\hocolim_k f(\rr^k)$ is a weak homotopy equivalence of spaces. 
A thorough study of this kind of model category appears in 
\cite[Section 15]{lind}. The discussion at the start of
\cite[Section 7]{weiss95} and 
\cite[Theorem 1.1]{lind} tell us that the homotopy
theory of $n \poly \ecal_0$ is the homotopy theory of based spaces. 
Thus we understand the $n=0$ case very well. 

For $n >0$, we want to understand the $n$--homogeneous objects, so we 
try to find a more structured model category 
that captures this homotopy theory, while having 
weak equivalences
that are simpler to understand.

\section{The \texorpdfstring{$n$}{n}-stable model structure}\label{sec:nstab}

We produce a model structure on the category $O(n) \ecal_n$ so 
that it is Quillen equivalent to $\ecal_0$ equipped with the $n$--homogeneous model structure. 
For this section, we are only 
interested in the case $n>0$, 
as the $0$--homogeneous model structure on $\ecal_0$ is 
the same as the $0$--polynomial model structure. 

We begin by relating $O(n) \ecal_n$ to  
orthogonal spectra, the primary difference being that
an object $X$ in $O(n) \ecal_n$ has structure maps of form
$X(V) \to \Omega^{nW} X(V \oplus W)$. 
Thus the model structure that we produce, which we call the 
$n$--stable model structure, is a variation of the usual stable
model structure to account for the unusual structure maps.  

To compare $O(n) \ecal_n$ with orthogonal spectra,
we apply the method of \cite{mm02} and \cite{mmss01}
for describing diagram spectra as diagram spaces. 
We want to reverse this process and describe $\jcal_n$--spaces as diagram spectra.

\begin{definition}
For $V$ a vector space, the one-point compactification of $V$ will be denoted by $S^V$, hence $S^{\rr^n} =S^n$. 
\end{definition}

\begin{definition}
For each $n \geqslant 0$, consider the functor $nS$ from $\ical$ to based spaces, which on an object $V$ takes value $nS(V) = S^{nV} =S^{\rr^n \otimes V}$. A map $f \co V \to W$ 
acts on the one-point compactification as $id \otimes f \co \rr^n \otimes V \to \rr^n \otimes W$.
\end{definition}
We see that $nS$ has an $O(n)$--action by acting on the $\rr^n$ factor. Note that $0S(V) = S^0$ for any $V$.

\begin{lemma}
For each $n \geqslant 0$, $nS$ is a commutative monoid in the category of $\ical$--spaces.
\end{lemma}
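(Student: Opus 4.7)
The strategy is to exhibit $nS$ as a lax symmetric monoidal functor from $(\ical, \oplus, 0)$ to $(\Top_*, \smashprod, S^0)$, since commutative monoids in the Day convolution symmetric monoidal category of $\ical$--spaces are precisely such functors. So I need to produce natural multiplication and unit maps and verify the three diagrams (associativity, unitality, commutativity).

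For the multiplication, I would define $\mu_{V,W} \co nS(V) \smashprod nS(W) \to nS(V \oplus W)$ as the canonical homeomorphism
\[
S^{\rr^n \otimes V} \smashprod S^{\rr^n \otimes W} \;\cong\; S^{(\rr^n \otimes V) \oplus (\rr^n \otimes W)} \;\cong\; S^{\rr^n \otimes (V \oplus W)},
\]
where the first identification comes from the standard homeomorphism $S^A \smashprod S^B \cong S^{A \oplus B}$ for one-point compactifications of inner product spaces, and the second from the natural distributivity isomorphism $d_{V,W} \co \rr^n \otimes V \oplus \rr^n \otimes W \to \rr^n \otimes (V \oplus W)$. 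For the unit I would take $\eta \co S^0 \to nS(0) = S^{\rr^n \otimes 0} = S^0$ to be the identity.

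Next I would verify that $\mu$ is natural in $V$ and $W$ with respect to morphisms of $\ical$: given isometric isomorphisms $f \co V \to V'$ and $g \co W \to W'$, the square built from $\mu_{V,W}$, $\mu_{V',W'}$, $nS(f) \smashprod nS(g)$ and $nS(f \oplus g)$ commutes because all four maps are obtained by applying one-point compactification to linear isometries and the distributivity $d_{V,W}$ is natural in $V,W$. Associativity of $\mu$ reduces, after unravelling, to the obvious equality of the two distributivity isomorphisms
\[
\rr^n \otimes U \,\oplus\, \rr^n \otimes V \,\oplus\, \rr^n \otimes W \;\longrightarrow\; \rr^n \otimes (U \oplus V \oplus W),
\]
and unitality is immediate since $d_{V,0}$ and $d_{0,V}$ are the identity after identifying $V \oplus 0 = V = 0 \oplus V$.

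The only step requiring any care is commutativity: I need the composite of $\mu_{W,V}$ with the symmetry $\tau \co nS(V) \smashprod nS(W) \to nS(W) \smashprod nS(V)$ to equal $nS(\sigma_{V,W}) \circ \mu_{V,W}$, where $\sigma_{V,W} \co V \oplus W \to W \oplus V$ is the swap in $\ical$. This reduces to checking that the diagram of linear isometries
\[
(\rr^n \otimes V) \oplus (\rr^n \otimes W) \;\xrightarrow{\;\sigma\;}\; (\rr^n \otimes W) \oplus (\rr^n \otimes V)
\]
followed by $d_{W,V}$ agrees with $d_{V,W}$ followed by $\id_{\rr^n} \otimes \sigma_{V,W}$, which holds on pure tensors and hence everywhere by linearity. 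Since the constructions are all manifestly equivariant for the $O(n)$--action on the $\rr^n$ factor, the monoid structure lives in $O(n)$--spaces as well; however, for the statement as given we only need the non-equivariant $\ical$--space structure. This completes the verification.
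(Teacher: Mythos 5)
Your proof is correct and is essentially the paper's argument in externalized form: the paper defines the multiplication on the Day convolution product via the coend formula, using exactly your identification $S^{nV}\smashprod S^{nW}\cong S^{n(V\oplus W)}$ together with the $\ical$--action map, which is the same data as your lax symmetric monoidal structure. You simply spell out the naturality, associativity, unit and commutativity checks that the paper leaves implicit.
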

\begin{proof}
The multiplication is
$$\int^{A,B}
\ical(A \oplus B, V)_+ \smashprod S^{nA} \smashprod S^{nB}
\cong
\int^{A,B}
\ical(A \oplus B, V)_+ \smashprod S^{n(A \oplus B)}
\to S^{nV}
$$
where the last map is induced by the $\ical$--action map of $nS$.
\end{proof}

We also see that this multiplication $nS \smashprod nS \to nS$
is $O(n)$--equivariant, with the diagonal action on the domain. 

\begin{proposition}\label{prop:nsmodisjnspaces}
For each $n \geqslant 0$, the category $\ecal_n$ is equivalent to the category of $nS$--modules in $\ical$--spaces. Similarly the category $O(n)\ecal_n$ is the category of
$nS$--modules in $O(n)$--equivariant $\ical$--spaces.
\end{proposition}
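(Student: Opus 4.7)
The plan is to apply the standard machinery of \cite{mmss01} (see particularly Section~2) which identifies the category of modules over a commutative monoid $R$ in $\dcal$--spaces with the category of $\dcal_R$--spaces, where $\dcal_R$ is a new enriched category built out of $\dcal$ and $R$. Taking $\dcal=\ical$ and $R=nS$, the task becomes to identify $\jcal_n$ with $\ical_{nS}$ as $O(n)$--topological categories, after which the proposition follows from the general theorem.

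The key computational step is a coend description of the morphism spaces of $\jcal_n$. Specifically, I would establish the natural homeomorphism
\[
\mor_n(V,W) \cong \int^{U \in \ical} \ical(V \oplus U, W)_+ \smashprod S^{nU}.
\]
A point $(f,x)$ in the total space of $\gamma_n(V,W)$ with $f \in \mor(V,W)$ and $x \in \rr^n \otimes (W-f(V))$ determines, on setting $U=W-f(V)$, a linear isometric isomorphism $V \oplus U \to W$ (extending $f$ by the inclusion $U \hookrightarrow W$) and a point $x \in S^{nU}$; the coend identifies the ambiguity coming from the choice of isomorphism $U \cong W-f(V)$, i.e.\ from the $O(U)$--action. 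Passing to Thom spaces handles the basepoint. One then checks that the composition in $\jcal_n$ described by the formula $((f,x),(g,y)) \mapsto (fg, x + f_*(y))$ corresponds on the right hand side to the composite built from composition in $\ical$ together with the commutative monoid multiplication of $nS$, so the coend presentation is compatible with composition and units.

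Once the coend formula is in hand, the equivalence is essentially formal. Given an $nS$--module $M$ with action map $nS \smashprod M \to M$, define a $\jcal_n$--space by the same underlying spaces $M(V)$ and the composite
\[
\mor_n(V,W) \smashprod M(V) \cong \int^{U} \ical(V \oplus U, W)_+ \smashprod S^{nU} \smashprod M(V)
\to
\int^{U} \ical(V \oplus U, W)_+ \smashprod M(V \oplus U)
\to M(W),
\]
where the first arrow uses the $nS$--action and the second uses the $\ical$--structure of $M$. Conversely, a $\jcal_n$--space $F$ yields an $\ical$--space by restriction along $\ical \to \jcal_n$ and an $nS$--action from the components $S^{nU} \hookrightarrow \mor_n(V,V \oplus U)$ of the coend. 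These constructions are mutually inverse by direct inspection.

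For the $O(n)$--equivariant statement I would rerun the same argument internally to the category of $O(n)$--equivariant $\ical$--spaces, noting that $nS$ is a commutative monoid in this larger category and that the $O(n)$--action on the coend via its action on the factor $S^{nU}$ matches the action $\sigma(f,x) = (f, \sigma \otimes \id (x))$ used to define $\jcal_n$ as an $O(n)$--topological category. The main obstacle is the coend identification itself; once that is established, the module interpretation is a direct application of the diagram--space/diagram--spectrum dictionary.
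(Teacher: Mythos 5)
Your proposal is correct and follows essentially the same route as the paper: both invoke the diagram--space machinery of \cite{mmss01} to identify $nS$--modules with spaces over an enriched category whose morphism objects are the coend $\int^{U \in \ical} \ical(V \oplus U, W)_+ \smashprod S^{nU}$, and then identify this coend with $\mor_n(V,W)$ via the decomposition $W \cong f(V) \oplus (W-f(V))$ (your map is just the inverse of the explicit isomorphism $(f,x) \mapsto (f_{|U},(\rr^n\otimes f)x)$ written in the paper), with the equivariant case handled by running the same argument with the $O(n)$--action on the $S^{nU}$ factor.
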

\begin{proof}
An $nS$--module in the category of $\ical$--spaces is a topological 
functor $X$ from $\ical$ to based spaces with an action map 
$X \smashprod nS \to X$. 

Applying \cite{mmss01} to this data gives a topological category $\jscr_n$ such that $\jscr_n$--spaces are precisely $nS$--modules in $\ical$--spaces. The category $\jscr_n$ has the same objects as $\ical$ and the morphism spaces are given by
\[
\jscr_n(U,V) = nS \leftmod (U^* \smashprod nS, V^* \smashprod nS)
\]
where $U^*$ is the $\ical$--space $\ical(U,-)_+$. This expression can be reduced to
\[
\int^{A \in \ical}
\ical(A \oplus U, V)_+ \smashprod S^{nA}
\]
from which one can see that $\jscr_n$ is isomorphic to $\jcal_n$.
A specific isomorphism is induced by the map below 
\[
\begin{array}{rcl}
\ical(A \oplus U, V)_+ \smashprod S^{nA} 
& \longrightarrow &
\mor_n(U,V) \\
(f, x) 
& \longmapsto &
(f_{|U}, (\rr^n \otimes f)x)
\end{array}
\]

For the equivariant case, an $nS$--module in the category of $O(n)$--equivariant $\ical$--spaces is an $O(n)$--topological functor $Y$ from $\ical$ (with trivial action) to $O(n)$--spaces with an action map $Y \smashprod nS \to Y$, that is levelwise a map of $O(n)$--spaces. The calculation that the associated diagram category is isomorphic to $\jcal_n$ is similar to the non-equivariant case.
\end{proof}

Now we know that we have a category of diagram spectra, 
we can apply the rest of \cite{mmss01} to obtain a stable model structure. 
We now restrict ourselves to the case $n>0$. 

It is important to note that we
are using the `coarse model structure' on $O(n)$--spaces.
Here a map is a weak equivalence or fibration
if the underlying space map is so. The generating (acyclic)
cofibrations have form $O(n)_+ \smashprod i$
where $i$ is a generating (acyclic) cofibration for the model
category of spaces.

There is a levelwise model structure on the categories $O(n) \ecal_n$ for all $n \geqslant 0$, where the weak equivalences are the levelwise weak equivalences of underlying non-equivariant spaces.

\begin{definition}
In the category $O(n) \ecal_n$, a map $f \co E \to F$
is said to be a \textbf{levelwise fibration}
or a \textbf{levelwise weak equivalence}
if each $f(V) \co E(V) \to F(V)$ is a fibration or weak homotopy
equivalence of underlying spaces for each $V \in \ical$.
A \textbf{cofibration} is a map that has the left lifting property
with respect to all maps which are levelwise fibrations and
levelwise weak equivalences.
\end{definition}
Let $I_{\Top}$ and $J_{\Top}$ be the generating sets for the weak homotopy equivalence model structure on spaces. The following lemma, which is an application of \cite[Theorem 6.5]{mmss01}, gives the generating sets for the levelwise model structure on $O(n) \ecal_n$.

\begin{lemma}
The collections of cofibrations,
levelwise fibrations and levelwise weak equivalences
form a cellular, proper, topological model structure on the
category $O(n) \ecal_n$. We denote this model category by $O(n) \ecal_n^l$.
The generating sets are given below.
$$I = \{ \mor_n(V,-) \smashprod O(n)_+ \smashprod i | V \in \ical, \ i \in I_{\Top} \}$$
$$J = \{ \mor_n(V,-) \smashprod O(n)_+ \smashprod  j | V \in \ical, \ i \in J_{\Top} \}$$
\end{lemma}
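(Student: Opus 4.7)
The proof proceeds by invoking the general diagram spectra machinery of \cite[Theorem 6.5]{mmss01} and transporting the resulting model structure across the isomorphism of categories established in Proposition \ref{prop:nsmodisjnspaces}. The plan is to first build a levelwise model structure on $O(n)$--equivariant $\ical$--spaces, then observe that a model structure lifts to $nS$--modules in this category, and finally identify this with $O(n) \ecal_n$.

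First I would assemble the pieces. The category of $O(n)$--spaces, in the coarse model structure described in Section \ref{sec:groupstuff}, is a cofibrantly generated, proper, topological, cellular model category with generating (acyclic) cofibrations of the form $O(n)_+ \smashprod i$ for $i \in I_{\Top}$ (or $J_{\Top}$). Since $\ical$ is a small topological category and $O(n) \Top$ satisfies the hypotheses of \cite[Theorem 6.5]{mmss01}, that theorem produces a levelwise model structure on $O(n)$--equivariant $\ical$--spaces whose generating sets are obtained by smashing free $\ical$--space functors $\ical(V,-)_+$ with the generators of $O(n) \Top$. Proper\-ness and the topological enrichment are inherited levelwise from $O(n) \Top$, and cellularity follows from the standard verification (compactness of cells, effective monomorphisms, smallness of domains) since each $\mor_n(V,-) \smashprod O(n)_+ \smashprod S^{k-1}_+$ is built from cells of $O(n) \Top$.

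Next I would transfer this to $nS$--modules. Since $nS$ is a commutative monoid in $\ical$--spaces with $O(n)$--equivariant multiplication, the category of $nS$--modules in $O(n)$--equivariant $\ical$--spaces inherits a model structure via the free--forgetful adjunction (see \cite[Theorem 12.1]{mmss01} or the general transfer result); the weak equivalences and fibrations are detected by the forgetful functor, and the generating sets are obtained by applying the free functor $(-) \smashprod nS$ to the generators of the level structure. Under the isomorphism of Proposition \ref{prop:nsmodisjnspaces}, the free $nS$--module $\ical(V,-)_+ \smashprod nS$ is identified with $\mor_n(V,-)$, which gives precisely the sets $I$ and $J$ as stated. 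Cellularity, properness and the topological enrichment all pass through this identification since the forgetful functor preserves and reflects the relevant structure.

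The main technical obstacle is the verification of cellularity, in particular that the domains and codomains of $I$ are compact and small relative to $I$--cell complexes in the sense of \cite[Chapter 12]{hir03}. This is where the careful analysis of the Thom space structure of $\mor_n(V,W)$ comes in: one needs to know that cells attach in a reasonable way and that the relevant monomorphisms are effective. However, this follows from the parallel verification for the usual category of orthogonal spectra in \cite{mmss01}, applied levelwise and then passed through the equivalence of categories. Properness and the module structure over based topological spaces (the topological enrichment) are then essentially automatic from the levelwise description of weak equivalences and fibrations.
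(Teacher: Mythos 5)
Your proposal is correct and rests on the same engine as the paper, namely \cite[Theorem 6.5]{mmss01} (in its $O(n)\Top$--enriched form, cf.\ \cite{mm02}) together with the identification of proposition \ref{prop:nsmodisjnspaces}; but your route is more roundabout than the one the paper takes. The paper applies the diagram-space machinery directly to the $O(n)$--topological category $\jcal_n$: since $O(n)\ecal_n$ is by definition the category of $O(n)\Top$--enriched functors out of $\jcal_n$, Theorem 6.5 immediately yields the levelwise structure with generators $\mor_n(V,-)\smashprod O(n)_+\smashprod i$, and only the cellularity claim needs the routine extra verification. You instead first put the level structure on $O(n)$--equivariant $\ical$--spaces and then lift it to $nS$--modules along the free--forgetful adjunction. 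That lifting step is where your argument carries unnecessary weight: a transfer along an adjunction in general requires checking that relative $J$--cell complexes in the module category are weak equivalences, but here no such check is needed, because proposition \ref{prop:nsmodisjnspaces} says the category of $nS$--modules \emph{is} (isomorphic to) the category of $\jcal_n$--diagrams, so one can simply apply Theorem 6.5 to the new diagram category rather than transfer anything; your identification of the free module $\ical(V,-)_+\smashprod nS$ with $\mor_n(V,-)$ is exactly the observation that makes the two descriptions of the generators agree. So both arguments buy the same conclusion; the paper's is shorter because it uses the diagram-category description from the outset, while yours makes explicit the module-theoretic picture that motivates the $n$--stable structure in the next section. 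Your remarks on properness, the topological enrichment, and the coarse model structure on $O(n)\Top$ are all in line with what the paper uses.
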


We continue to follow the pattern for diagram spectra and construct the kinds of homotopy groups we need to consider in order to define an $n$--stable model structure. 

It is useful to note that a map $f \in O(n) \ecal_n$ is a levelwise fibration or weak equivalence if and only if $i^* f \in \ecal_n$ is. Similarly, our weak equivalences for the $n$--stable model structure will be defined in terms of underlying non-equivariant spaces. 

\begin{definition}
The \textbf{$n$--homotopy groups} of an object $X$ of $O(n) \ecal_n$are defined as
\[
n\pi_{k}(X) = {\colim_q} \pi_{nq+k}(X(\rr^q))
\]
A map is said to be an \textbf{$n\pi_*$--equivalence} if it induces isomorphisms
on $n$--homotopy groups for all integers $k$.
\end{definition}

\begin{lemma}
A levelwise weak equivalence of $O(n) \ecal_n$ is an $n\pi_*$--equivalence.
\end{lemma}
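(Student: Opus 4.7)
The plan is to unwind the definition of $n\pi_*$, verify that a levelwise weak equivalence induces an isomorphism on each term of the defining colimit system, and then invoke the fact that filtered colimits preserve isomorphisms.

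First I would recall that, by Proposition \ref{prop:nsmodisjnspaces}, an object $X \in O(n)\ecal_n$ is an $nS$-module in $O(n)$-equivariant $\ical$-spaces, so the monoid action assembles to adjoint structure maps
\[
\sigma_X \co X(\rr^q) \longrightarrow \Omega^{n} X(\rr^{q+1})
\]
obtained from $X(\rr^q) \smashprod S^n = X(\rr^q) \smashprod nS(\rr) \to X(\rr^{q+1})$. The colimit in the definition of $n\pi_k(X)$ is taken along the composite
\[
\pi_{nq+k} X(\rr^q) \xrightarrow{(\sigma_X)_*} \pi_{nq+k} \Omega^n X(\rr^{q+1}) = \pi_{n(q+1)+k} X(\rr^{q+1}).
\]
Since any morphism $f \co X \to Y$ in $O(n)\ecal_n$ is in particular a map of $nS$-modules, $f$ commutes strictly with these structure maps, so applying $\pi_{nq+k}$ yields a morphism of sequential diagrams indexed by $q$.

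Next I would use the hypothesis that $f$ is a levelwise weak equivalence: by definition each $f(\rr^q) \co X(\rr^q) \to Y(\rr^q)$ is a weak homotopy equivalence of underlying non-equivariant based spaces. Hence for every integer $k$ and every sufficiently large $q$ (so that $nq+k \geq 0$) the induced map $\pi_{nq+k} f(\rr^q)$ is an isomorphism of abelian groups (or pointed sets when $nq+k = 0$). We therefore have a map of sequential diagrams of abelian groups in which every component map is an isomorphism.

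Finally, since filtered (in particular sequential) colimits in abelian groups preserve isomorphisms, passing to the colimit over $q$ gives
\[
n\pi_k(f) \co n\pi_k(X) \xrightarrow{\ \cong\ } n\pi_k(Y)
\]
for every $k$, which is precisely the statement that $f$ is an $n\pi_*$-equivalence. There is no real obstacle here; the only point demanding care is the verification that $f$ is compatible with the $nS$-module structure maps used to form the colimit, which is immediate from the fact that morphisms in $O(n)\ecal_n$ are by definition maps of $nS$-modules.
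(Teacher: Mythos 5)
Your proof is correct, and it is precisely the standard diagram-spectra argument that the paper leaves implicit: the lemma is stated there without proof, the point being exactly that a map in $O(n)\ecal_n$ commutes with the structure maps defining the colimit system, each $\pi_{nq+k}f(\rr^q)$ is an isomorphism, and sequential colimits preserve isomorphisms. Your care over the $nS$-module structure maps and the low-degree terms is fine and matches what the authors (following \cite{mmss01}) intend.
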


Now that we have the set of weak equivalences we are interested in, we should identify the fibrant objects of the $n$--stable model structure. The defining property of these fibrant objects should be that a $n \pi_*$--isomorphism between two fibrant objects is a levelwise weak equivalence. 

\begin{definition}
An object of $O(n) \ecal_n$ is an \textbf{$n\Omega$--spectrum} if the adjoints of its structure maps
$X(V) \to \Omega^{nW}X(V \oplus W)$ are weak homotopy equivalences.
\end{definition}

\begin{lemma}
An $n\pi_*$--equivalence between $n\Omega$--spectra is a levelwise weak  equivalence.
\end{lemma}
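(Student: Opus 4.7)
The plan is to adapt the standard argument for stable model structures on diagram spectra to our $n$--scaled setting. Since both the hypothesis and the conclusion concern underlying non-equivariant maps, I would first discard the $O(n)$--action and work in $\ecal_n$; the goal is then to show that $f(V) \co X(V) \to Y(V)$ is a weak homotopy equivalence for every $V \in \jcal_0$.

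First, I would reduce to the vector spaces $\rr^q$. Any linear isometric isomorphism $V \to \rr^{\dim V}$ determines a point of $\mor_n(V, \rr^{\dim V})$ (the fibre of $\gamma_n$ over an isometric isomorphism is a single point, as the orthogonal complement vanishes), and the inverse isometry provides a two-sided inverse in $\jcal_n$ by the composition formula of the previous section. Hence $X(V) \cong X(\rr^{\dim V})$ naturally in $X$, so it suffices to prove $f(\rr^q)$ is a weak homotopy equivalence for each $q \geqslant 0$.

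Second, I would stabilise the homotopy groups using the $n\Omega$--spectrum condition. For any $q, r \geqslant 0$, the hypothesis applied to $V = \rr^q$ and $W = \rr^r$ gives a weak homotopy equivalence $X(\rr^q) \to \Omega^{nr} X(\rr^{q+r})$. Applying $\pi_m$ for $m \geqslant 0$ and identifying $\pi_m \Omega^{nr}$ with $\pi_{m+nr}$ yields isomorphisms $\pi_m X(\rr^q) \cong \pi_{m+nr} X(\rr^{q+r})$. By adjunction these coincide with the composite of the transition maps $\pi_{nq'+k} X(\rr^{q'}) \to \pi_{n(q'+1)+k} X(\rr^{q'+1})$ in the colimit defining $n\pi_{m-nq}(X)$. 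Therefore each such transition map is an isomorphism, the colimit is attained already at stage $q$, and $\pi_m X(\rr^q) \cong n\pi_{m-nq}(X)$ for every $m \geqslant 0$. The same identification holds for $Y$.

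Third, since $f$ is assumed to be an $n\pi_*$--isomorphism, the induced map $\pi_m X(\rr^q) \to \pi_m Y(\rr^q)$ is an isomorphism for every $m \geqslant 0$ and every $q \geqslant 0$, so $f(\rr^q)$ is a weak homotopy equivalence. Combined with the reduction of the first step this shows that $f(V)$ is a weak homotopy equivalence for all $V \in \jcal_0$, as required. There is no deep obstacle here; the only point requiring care is the bookkeeping verification that the isomorphism produced by the $n\Omega$--spectrum condition really does factor as the composite of the transition maps in the colimit defining $n\pi_k$, which is a routine adjunction check once one identifies the structure maps $X(V) \smashprod S^{nW} \to X(V \oplus W)$ coming from the $nS$--module description of $O(n)\ecal_n$ given in Proposition \ref{prop:nsmodisjnspaces}.
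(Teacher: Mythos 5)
Your proposal is correct and follows essentially the same route as the paper: reduce to the spaces $\rr^q$ via an isometric isomorphism, use the $n\Omega$--spectrum condition to identify $\pi_m X(\rr^q)$ with $n\pi_{m-nq}(X)$ (the colimit being attained immediately), and then invoke the $n\pi_*$--isomorphism hypothesis. The only cosmetic difference is that the paper phrases the middle step as $\pi_q X(\rr^a) \cong \colim_b \pi_q(\Omega^{nb}X(\rr^{a+b}))$ rather than arguing that each transition map is an isomorphism, which amounts to the same bookkeeping.
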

\begin{proof}
We want to show that
$\pi_k(f) \co \pi_k (X(V)) \to \pi_k (Y(V))$
is an isomorphism for all $V$ and all $k \geqslant 0$.
Choose an isomorphism $\rr^a \to V$, then
\[
\pi_q (X(V)) 
\cong 
\pi_q (X(\rr^a)) 
\cong 
\colim_{b} \pi_q (\Omega^{nb} X(\rr^{a+b}))
\]
Which is isomorphic to 
$n \pi_{q-na} (X)$.
Since $f$ is an $n \pi_*$--equivalence, it induces
an isomorphism
\[
\pi_q X(V) \cong n\pi_{q - na} (X) \to 
n\pi_{q - na} Y  \cong \pi_q Y(V)
\]
Hence $f(V) \co X(V) \to Y(V)$ is 
a weak equivalence of spaces. 
\end{proof}

We will use a left Bousfield localisation to create the 
$n$--stable model structure. Thus we need to identify a class of maps
which we will invert to create the $n$--stable model structure from the projective model structure. We will then need to show that this class is generated by a set and that this class coincides with the $n \pi_*$--isomorphisms. The general theory of localisations tells us that the class of weak equivalences is determined by the fibrant objects, in the sense of the following definition. 

\begin{definition}
We say that a map $f \co X \to Y$ is an \textbf{$n$--stable equivalence}
if the induced map on levelwise homotopy categories
$f^* \co [Y,E]_l \to [X,E]_l$ is an isomorphism for all
$n \Omega$--spectra $E$.
\end{definition}

Recall from proposition \ref{prop:jntojn} that, for inner-product spaces $V$ and $W$, there is a map
$S^{nW} \to \mor_n(V,V \oplus W)$ induced by sending 
$x \in nW$ to $(i,x) \in \gamma_n(V,V \oplus W)$,
where $i \co V \to V\oplus W$ is the standard inclusion.
By the Yoneda lemma, we can turn this into a map 
\[
\lambda_{V,W}^n \co 
\mor_n(V \oplus W,-) \smashprod S^{nW} 
\longrightarrow
\mor_n(V,-)
\]

\begin{lemma}\label{lem:lambdaconnect}
The maps $\lambda_{V,W}^n$ are $n$--stable equivalences and
$n\pi_*$--isomorphisms.
\end{lemma}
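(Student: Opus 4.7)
The two claims separate. The $n$-stable equivalence is formal: for $E \in O(n) \ecal_n$, enriched Yoneda and the tensor-cotensor adjunction of section \ref{sec:groupstuff} give natural isomorphisms
\[
\nat(\mor_n(V,-), E) \cong E(V), \qquad
\nat(\mor_n(V \oplus W,-) \smashprod S^{nW}, E) \cong \Omega^{nW} E(V \oplus W),
\]
and unwinding the definition of $\lambda^n_{V,W}$ identifies the map it induces on these objects with the adjoint $E(V) \to \Omega^{nW} E(V \oplus W)$ of the structure map of $E$. When $E$ is an $n\Omega$-spectrum this is a weak equivalence, so $\lambda^{n}_{V,W}$ induces an isomorphism on $[-, E]_l$ for every such $E$, proving the first claim.

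\textbf{The $n\pi_*$-isomorphism, case $W = \rr$.} Proposition \ref{prop:jntojn} exhibits $\lambda^n_{V,\rr}$ as the first map of a homotopy cofibre sequence with cofibre $\mor_{n+1}(V, -)$. Evaluating at $\rr^q$ gives a cofibre sequence of pointed spaces and hence a long exact sequence of $\pi_*$ in the stable range; since filtered colimits preserve exactness, this assembles to a long exact sequence of $n\pi_*$, and it therefore suffices to show $n\pi_*(\mor_{n+1}(V,-)) = 0$. Now $\mor_{n+1}(V, \rr^q)$ is the Thom space of the rank $(n+1)(q - \dim V)$ bundle $\gamma_{n+1}(V, \rr^q)$ over the Stiefel manifold $\mor(V, \rr^q)$, which is $(q - \dim V - 1)$-connected; hence the Thom space is at least $((n+2)(q - \dim V) - 1)$-connected. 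Because $n+2 > n$, the inequality $(n+2)(q - \dim V) - 1 > nq + k$ holds for every fixed $k$ once $q$ is large enough, so $\pi_{nq+k}(\mor_{n+1}(V, \rr^q)) = 0$ in that range and $n\pi_k(\mor_{n+1}(V,-)) = 0$.

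\textbf{General $W$ and the main obstacle.} We induct on $\dim W$. Writing $W \cong \rr \oplus W'$ and using $S^{nW} \cong S^{nW'} \smashprod S^n$, the map $\lambda^n_{V,W}$ factors as
\[
\mor_n(V \oplus W, -) \smashprod S^{nW'} \smashprod S^n
\xrightarrow{\lambda^n_{V \oplus \rr, W'} \smashprod S^n}
\mor_n(V \oplus \rr, -) \smashprod S^n
\xrightarrow{\lambda^n_{V, \rr}}
\mor_n(V, -).
\]
The second factor is an $n\pi_*$-iso by the previous step. Smashing with $S^n$ merely shifts $n\pi_*$ indexing by $n$ (Freudenthal suspension in the stable range), so it preserves $n\pi_*$-equivalences; hence the first factor is an $n\pi_*$-iso by the inductive hypothesis, and the composite is too. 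The main technical obstacle is the connectivity estimate for $\mor_{n+1}(V, \rr^q)$ together with its interaction with the colimit defining $n\pi_*$: it succeeds because the bundle rank grows as $(n+1)q$ while the degrees of interest are only $nq + k$, giving a one-dimension-per-unit-$q$ margin and so a uniform stable range in the colimit.
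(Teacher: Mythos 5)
Your proof is correct, but the $n\pi_*$ half follows a genuinely different route from the paper. The paper argues levelwise and directly on the map: fixing an isometry $V\oplus W\to U$, it writes $\Sigma^{nV}\lambda^n_{V,W}(U)$ explicitly as a map $O(U)/O(U-V-W)_+\smashprod S^{nU}\to O(U)/O(U-V)_+\smashprod S^{nU}$ and quotes the connectivity estimate $(n+1)\dim(U)-\dim(V)-\dim(W)$ for this map of homogeneous spaces, which beats the degrees $n\dim(U)+k$ in the colimit. You instead treat $W=\rr$ by identifying $\lambda^n_{V,\rr}$ with the first map of the cofibre sequence of Proposition \ref{prop:jntojn} and showing the cofibre $\mor_{n+1}(V,-)$ has vanishing $n\pi_*$ by a Thom-space connectivity bound, then induct on $\dim W$ via the factorisation $\lambda^n_{V,W}=\lambda^n_{V,\rr}\circ(\lambda^n_{V\oplus\rr,W'}\smashprod S^n)$ (which is correct). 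Your route avoids the explicit coend/homogeneous-space description of $\mor_n(V,U)$ and only needs the connectivity of a Thom space, and it ties the lemma to the cofibre sequence already in play; the price is that you must invoke the Blakers--Massey stable range to pass from the levelwise cofibre sequences to a long exact sequence of $n\pi_*$, and the fact that $-\smashprod S^n$ shifts $n\pi_*$ — both legitimate here because the levels involved have connectivity growing like $nq$ while the degrees of interest are only $nq+k$, but these justifications should be said explicitly. The first half (the $n$--stable equivalence via Yoneda and the tensor--cotensor adjunction) is the same as the paper's.

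One correction: your connectivity bound for the cofibre is overstated. The Thom space of a rank $r$ vector bundle over a nonempty connected base is only $(r-1)$-connected — the Thom isomorphism gives $\tilde H_r \cong H_0$ of the base (suitably twisted), which is nonzero — so the connectivity of the base does not add to the rank. Thus $\mor_{n+1}(V,\rr^q)$ is $((n+1)(q-\dim V)-1)$-connected, not $((n+2)(q-\dim V)-1)$-connected. This does not break the argument: $(n+1)(q-\dim V)-1 > nq+k$ for all large $q$, which is exactly the ``one dimension per unit $q$'' margin your final sentence appeals to; just replace the faulty estimate by the rank-minus-one bound.
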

\begin{proof}
These maps have been chosen so that
\[
(\lambda_{V,W}^n)^* \co
\nat_{O(n)\ecal_n} (\mor_n(V,-),X) 
\longrightarrow
\nat_{O(n)\ecal_n} (\mor_n(V \oplus W,-)\smashprod  S^{nW},X)
\]
is precisely the adjoint of the structure map of $X$.
Thus they are automatically $n$--stable equivalences.
To see that they are $n\pi_*$--isomorphisms, we follow the usual 
calculation and see that the result holds as $\lambda_{V,W}^n(U)$
gets more and more connected as the dimension of $U$ increases.

If we fix a particular linear isometry $V \oplus W \to U$ (the 
choice is unimportant), we can use our description of $\mor_n(V,U)$ 
in proposition \ref{prop:nsmodisjnspaces} as a coend to write
\[
\lambda_{V,W}^n(U) \co 
O(U)_+ \smashprod_{O(U-V-W)} S^{n(U-V)}
\longrightarrow
O(U)_+ \smashprod_{O(U-V)} S^{n(U-V)}
\]

By the definition of $n\pi_*$, it is easy to see that 
this map is a $n \pi_*$--isomorphism if and only if its
suspension by $nV$ is. So we only need study 
$\Sigma^{nV} \lambda_{V,W}^n(U)$. The advantage of doing so is
that $O(U)$ will act
on the sphere term and we can rewrite it as
\[
\Sigma^{nV} \lambda_{V,W}^n(U) \co 
O(U)/{O(U-V-W)}_+ \smashprod S^{nU}
\longrightarrow
O(U)/{O(U-V)}_+ \smashprod S^{nU}
\]
This new map is $(n+1)\dim(U)-\dim(V)-\dim(W)$--connected.
So when we look at the $n\pi_{k}$--homotopy groups,
we are looking at maps from
$S^{nU +k}$ to a space that is
$(n+1)\dim(U)-\dim(V)-\dim(W)$--connected.
Since the dimension of $U$ increases in the colimit,
it is clear that we have a isomorphism of the colimits.
\end{proof}

The collection of $\lambda_{V,W}^n$ is the set of maps we wish to 
invert. We now need to turn these maps into cofibrations. Then
we can them to the generating acyclic cofibrations for the 
projective model structure to make a generating set for the 
$n$--stable model structure. 
 
Recall that the pushout product $f \square g$
of two maps $f \co A \to B$ and $g \co X \to Y$,
is defined to be the map
\[
f \square g \co 
A \smashprod Y \coprod_{A \smashprod X} B \smashprod X 
\longrightarrow B \smashprod Y
\]

\begin{definition}
Let $M\lambda_{V,W}^n$ be the mapping cylinder of $\lambda_{V,W}^n$ 
(which is homotopy equivalent to the codomain).
Let $k_{V,W}^n \co \mor_n(V \oplus W,-) \smashprod S^{nW} \to M\lambda_{V,W}^n$ be the inclusion into the top of the cylinder.
Now define
$$
J' = J \cup \{i \square k_{V,W}^n | i \in I_{\Top} \quad V,W \in \ical \}
$$
\end{definition}

\begin{proposition}
There is a cofibrantly generated, proper, topological model structure
on the category $O(n) \ecal_n$, called the \textbf{$n$--stable model structure}.
The cofibrations are as for the levelwise model structure, 
the weak equivalences are the
$n\pi_*$--isomorphisms and the fibrant objects are the $n\Omega$--spectra. This model category is written $O(n) \ecal_n^{\pi}$.
\end{proposition}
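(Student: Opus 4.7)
The plan is to follow the standard pattern for constructing stable model structures on diagram spectra from \cite{mmss01}: produce the $n$--stable model structure from the levelwise structure $O(n)\ecal_n^l$ by enlarging the set of generating acyclic cofibrations from $J$ to $J'$ and identifying the resulting weak equivalences with the $n\pi_*$--isomorphisms. First I would apply Quillen's small object argument to the pair $(I, J')$; all domains in $I$ and $J'$ are small in $O(n)\ecal_n^l$, so this yields functorial factorisations as a relative $I$--cell followed by an $I$--injective, and as a relative $J'$--cell followed by a $J'$--injective. The next task is to identify the $J'$--injectives. A map $p \co X \to Y$ is $J$--injective exactly when it is a levelwise fibration, and by adjunction the further lifting property against $i \square k_{V,W}^n$ for $i \in I_{\Top}$ is equivalent to the square
\[
\xymatrix{
X(V) \ar[r] \ar[d] & \Omega^{nW} X(V \oplus W) \ar[d] \\
Y(V) \ar[r] & \Omega^{nW} Y(V \oplus W)
}
\]
being a homotopy pullback of spaces for every $V$ and $W$. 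Setting $Y = \ast$ shows that the fibrant objects are precisely the $n\Omega$--spectra.

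Next I would identify the weak equivalences. By Lemma \ref{lem:lambdaconnect} each $\lambda_{V,W}^n$ is an $n\pi_*$--isomorphism, so the mapping cylinder inclusion $k_{V,W}^n$ is a cofibration that is also an $n\pi_*$--isomorphism. Since $n\pi_*$ is defined as a sequential colimit of homotopy groups, it commutes with sequential colimits and with pushouts along cofibrations, and so every relative $J'$--cell is both a cofibration and an $n\pi_*$--isomorphism. Conversely, given any $n\pi_*$--isomorphism $f$, factoring it as $f = p \circ j$ with $j$ a relative $J'$--cell and $p$ a $J'$--injective makes $p$ an $n\pi_*$--isomorphism between $J'$--injectives; the fibres of $p$ are $n\Omega$--spectra, so the earlier lemma that an $n\pi_*$--isomorphism between $n\Omega$--spectra is a levelwise weak equivalence forces $p$ to be a levelwise acyclic fibration. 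This closes the factorisation and lifting axioms and shows that the weak equivalences are exactly the $n\pi_*$--isomorphisms; they coincide with the $n$--stable equivalences because the class of $n\Omega$--spectra detects both.

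The remaining axioms are routine. The 2--out--of--3 and retract properties of $n\pi_*$--isomorphisms are immediate. Left properness follows from cofibrations being levelwise cofibrations together with exactness of $n\pi_*$ on the associated pushouts; right properness follows from the long exact sequence of a levelwise fibration. The pushout--product and unit axioms making the structure topological are inherited from $O(n)\ecal_n^l$ on cofibrations, and the corresponding property for acyclic cofibrations is forced by the way $J'$ was constructed, namely as pushout--products of $k_{V,W}^n$ with the generators $i \in I_{\Top}$.

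The main obstacle I expect is verifying that relative $J'$--cell complexes are $n\pi_*$--isomorphisms: the argument reduces to showing that $i \square k_{V,W}^n$ is one, and then that the class of $n\pi_*$--isomorphisms is closed under pushouts along cofibrations and transfinite composition. The general template is classical from \cite{mmss01}, but the non--standard structure maps $X(V) \to \Omega^{nW} X(V\oplus W)$ mean the underlying colimit--system computations for $n\pi_*$ must be redone with $nS$ in place of the ordinary sphere. Lemma \ref{lem:lambdaconnect} already supplies the key connectivity estimate, so the rest is careful but routine bookkeeping.
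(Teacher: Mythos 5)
Your proposal is correct and follows essentially the same route as the paper, which simply invokes the standard diagram-spectra machinery of \cite{mmss01} (``all but identical'') and, as its one explicit illustration, identifies the $J'$--injectives via the same homotopy-pullback characterisation you give. The details you flag as remaining bookkeeping (relative $J'$--cells being $n\pi_*$--isomorphisms, the factorisation argument for $J'$--injective $n\pi_*$--isomorphisms, properness) are exactly the steps the paper delegates to \cite{mmss01}, adapted to the structure maps $X(V)\to\Omega^{nW}X(V\oplus W)$ with the connectivity input supplied by Lemma \ref{lem:lambdaconnect}.
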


The proof of this result is all but identical to \cite{mmss01} or \cite{mm02}.
As an illustration, we will identify the fibrations of this model
structure. But first, we want to justify the use of the term stable with the following lemma, which is proved in the same manner as for other categories of diagram spectra.

\begin{lemma}
A map $f$ in $O(n) \ecal_n$ is an $n\pi_*$--equivalence
if and only if $\Sigma f$
is an $n\pi_*$--equivalence.
\end{lemma}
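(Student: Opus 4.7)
The plan is to produce a natural isomorphism
\[
n\pi_k(\Sigma X) \;\cong\; n\pi_{k-1}(X)
\]
for every integer $k$ and every $X \in O(n)\ecal_n$; once this is established, the lemma follows immediately, since a map induces an iso on all $n\pi_k$ iff its suspension induces an iso on all $n\pi_{k-1}$.

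For the forward map I would use the ordinary suspension at each level $q$,
\[
\sigma \co \pi_{nq+k-1}(X(\rr^q)) \longrightarrow \pi_{nq+k}(X(\rr^q)\smashprod S^1) = \pi_{nq+k}((\Sigma X)(\rr^q)),
\]
and argue it is compatible with the colimit system by comparing the two resulting maps into $\pi_{n(q+1)+k}(X(\rr^{q+1})\smashprod S^1)$: one factors through $\pi_{nq+k}(\Omega^n X(\rr^{q+1})\smashprod S^1)$, the other through $\pi_{nq+k}(\Omega^n(X(\rr^{q+1})\smashprod S^1))$, and the comparison is the assembly map for $\Omega^n$, which is a natural transformation. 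Passing to the colimit in $q$ yields a natural homomorphism $n\pi_{k-1}(X) \to n\pi_k(\Sigma X)$.

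For the backward map I would use the standing hypothesis $n\ge 1$ and factor $S^n = S^1 \smashprod S^{n-1}$, then use the structure map $X(\rr^q)\smashprod S^n \to X(\rr^{q+1})$ to form
\[
\pi_{nq+k}(X(\rr^q)\smashprod S^1) \xrightarrow{\;-\smashprod S^{n-1}\;} \pi_{nq+n+k-1}(X(\rr^q)\smashprod S^n) \longrightarrow \pi_{n(q+1)+k-1}(X(\rr^{q+1})),
\]
which I would check is compatible with the colimit system, giving $n\pi_k(\Sigma X) \to n\pi_{k-1}(X)$ in the limit. The decisive step is verifying that the two composites coincide, up to the colimit structure maps, with the respective identities. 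Both composites reduce to two different presentations of the iterated structure map $X(\rr^q)\smashprod S^1 \smashprod S^{n-1} \to X(\rr^{q+1})\smashprod S^1$, differing only by the twist $S^1 \smashprod S^{n-1}\cong S^{n-1}\smashprod S^1$, which has degree $(-1)^{n-1}$. I expect this sign-bookkeeping across the colimit system to be the main technical point, but it is harmless: we only need both composites to be isomorphisms, not literal identities, to conclude that the forward and backward maps are both isos.

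With the natural iso in hand, the commutative square
\[
\xymatrix@C+1cm{
n\pi_{k-1}(X) \ar[r]^-{\cong} \ar[d]_-{n\pi_{k-1}(f)} & n\pi_k(\Sigma X) \ar[d]^-{n\pi_k(\Sigma f)} \\
n\pi_{k-1}(Y) \ar[r]^-{\cong} & n\pi_k(\Sigma Y)
}
\]
shows that $n\pi_{k-1}(f)$ is an iso for every $k$ iff $n\pi_k(\Sigma f)$ is an iso for every $k$, which is precisely the assertion that $f$ is an $n\pi_*$-equivalence iff $\Sigma f$ is.
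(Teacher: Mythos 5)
Your argument is correct and is essentially the standard diagram-spectra proof (interleaving the colimit systems for $n\pi_{k-1}(X)$ and $n\pi_k(\Sigma X)$ via the suspension homomorphism and the structure maps, with the sign from the coordinate twist being harmless), which is exactly what the paper intends when it says the lemma ``is proved in the same manner as for other categories of diagram spectra'' and defers to \cite{mmss01}. No substantive difference from the paper's (omitted) proof.
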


\begin{lemma}
A map $f \co E \to B$ has the right-lifting-property with respect to $J'$
if and only if $f$ is an levelwise fibration and the diagram below is always a
homotopy pullback.
$$\xymatrix{
E(V) \ar[r] \ar[d] & \Omega^{nW} E(V \oplus W) \ar[d] \\
B(V) \ar[r]        & \Omega^{nW} B(V \oplus W)
}$$
Thus the fibrant object are precisely the $n \Omega$--spectra.
\end{lemma}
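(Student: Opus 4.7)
The plan is to translate the lifting condition via the pushout-product adjunction and then identify the resulting map of spaces using Yoneda. First, the RLP against $J$ alone is, by the definition of the levelwise model structure already established, precisely the condition that $f$ be a levelwise fibration. It remains to analyze the RLP against the family $\{i\square k_{V,W}^n \mid i\in I_{\Top},\ V,W\in \ical\}$. By the standard two-variable adjunction in a topological category, $f$ lifts against $i\square k_{V,W}^n$ for every generating cofibration $i\in I_{\Top}$ of spaces if and only if the pullback-hom map
\[
\hom^{\square}(k_{V,W}^n, f)\co \nat(M\lambda_{V,W}^n, E) \longrightarrow \nat(M\lambda_{V,W}^n, B) \times_{\nat(\mor_n(V\oplus W,-)\smashprod S^{nW}, B)} \nat(\mor_n(V\oplus W,-)\smashprod S^{nW}, E)
\]
is an acyclic Serre fibration of topological spaces. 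Since $f$ is already a levelwise fibration, the Serre fibration part is automatic, so the only content is that this map be a weak homotopy equivalence.

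Next I identify that map. By Yoneda, $\nat(\mor_n(V,-), X)\cong X(V)$, and by combining Yoneda with the adjunction $(-)\smashprod S^{nW} \dashv \Omega^{nW}(-)$ one has $\nat(\mor_n(V\oplus W,-)\smashprod S^{nW}, X)\cong \Omega^{nW} X(V\oplus W)$. By construction of $\lambda_{V,W}^n$ (it is the Yoneda dual of the map $S^{nW}\to \mor_n(V,V\oplus W)$ that encodes the structure maps of $\jcal_n$), the induced map on mapping spaces is precisely the adjoint structure map $X(V)\to \Omega^{nW} X(V\oplus W)$. Replacing $\lambda_{V,W}^n$ by its mapping cylinder $M\lambda_{V,W}^n$ does not change the homotopy type of these enriched hom-spaces, since the deformation retraction $M\lambda_{V,W}^n\simeq \mor_n(V,-)$ is a homotopy equivalence between cofibrant objects (by the cofibrancy reasoning of lemma \ref{lem:coffunct}, adapted to $O(n)\ecal_n$). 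Consequently, after these identifications, $\hom^{\square}(k_{V,W}^n, f)$ is naturally weakly equivalent to the comparison map
\[
E(V)\longrightarrow B(V)\times_{\Omega^{nW}B(V\oplus W)} \Omega^{nW}E(V\oplus W),
\]
so the RLP against the new elements of $J'$ is exactly the assertion that this comparison map is a weak equivalence for all $V,W$, i.e.\ that the displayed square is a homotopy pullback. The final sentence, identifying the fibrant objects, is then immediate: specializing to $B=\ast$ the homotopy pullback condition collapses to the requirement that $E(V)\to \Omega^{nW}E(V\oplus W)$ be a weak equivalence for all $V,W$, which is exactly the defining property of an $n\Omega$-spectrum.

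The main technical hurdle will be the bookkeeping around the mapping cylinder: ensuring that passing from $\lambda_{V,W}^n$ to its cofibrant replacement $k_{V,W}^n$ leaves the Yoneda/adjunction identification intact up to honest weak equivalence, and that the ``acyclic Serre fibration'' output of the pushout-product adjunction really does encode the homotopy pullback condition rather than a stricter strict-pullback statement. Once this is carefully verified, the remaining arguments are entirely analogous to the orthogonal-spectrum case of \cite{mmss01, mm02}, with the only novelty being that the relevant sphere object is $S^{nW}$ rather than $S^W$ and the resulting $\Omega$-spectrum condition uses $\Omega^{nW}$.
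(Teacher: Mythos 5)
Your proposal is correct and follows essentially the same route as the paper's proof: reduce the RLP against $i\square k_{V,W}^n$ via the pullback-hom adjunction to $\nat(k_{V,W}^n{}^*,f_*)$ being an acyclic fibration, use that $k_{V,W}^n$ is a cofibration and $f$ a levelwise fibration to isolate the weak-equivalence condition, pass from the mapping cylinder back to $\lambda_{V,W}^n$, and identify the resulting map by Yoneda and the $(-)\smashprod S^{nW}\dashv\Omega^{nW}$ adjunction as the homotopy-pullback comparison, with $B=\ast$ giving the $n\Omega$--spectrum characterisation. The paper merely states this argument in condensed form, so your extra bookkeeping (cylinder replacement, strict versus homotopy pullback) is exactly the detail it leaves implicit.
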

\begin{proof}
Assume that $f$ has the right-lifting-property with respect to $J'$,
then it is certainly an levelwise fibration.
So we must check that $f$ has the right-lifting-property with respect to
$i \square k_{V,W}$. This is equivalent to checking that
$O(n) \ecal_n (k_{V,W}^*, p_*)$ is an acyclic fibration of spaces.
We know that $k_{V,W}$ is a cofibration of 
$\jcal_n$--spaces and $p$ is a fibration,
so it suffices to show that
$O(n) \ecal_n (k_{V,W}^*, p_*)$ is a weak equivalence.
By the way we have constructed $k_{V,W}$
all we need to show is that
$O(n) \ecal_n ((\lambda_{V,W}^n)^*, p_*)$
is a weak equivalence. Writing out what this means is
precisely the statement that the diagram of the lemma
is a homotopy pullback.
Carefully reading this argument shows that
the converse is also true.
\end{proof}

\begin{corollary}
A map $f$ in $O(n) \ecal_n$ is an $n\pi_*$--equivalence
if and only if it is an $n$--stable equivalence.
\end{corollary}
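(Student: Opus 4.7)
The plan is to exploit the fact that $O(n)\ecal_n^{\pi}$ is a left Bousfield localisation of the levelwise structure so that both directions reduce to Quillen--adjunction bookkeeping plus a short Whitehead argument. The key observation is that the identity gives a left Quillen functor
\[
\id \co O(n)\ecal_n^l \longrightarrow O(n)\ecal_n^{\pi},
\]
since the cofibrations agree by construction and every levelwise weak equivalence is an $n\pi_*$--isomorphism by the lemma established earlier. The derived adjunction then supplies a natural bijection $[X,E]_l \cong [X,E]_{\pi}$ for every $X$ and every $n\Omega$--spectrum $E$ (fibrant on both sides), where $[-,-]_{\pi}$ denotes maps in the homotopy category of $O(n)\ecal_n^{\pi}$. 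Granting this bijection, the forward implication is immediate: if $f$ is an $n\pi_*$--isomorphism then it is a weak equivalence in $O(n)\ecal_n^{\pi}$, so $f^* \co [Y,E]_{\pi} \to [X,E]_{\pi}$ is an isomorphism for every $E$, and transporting across the bijection for $E$ an $n\Omega$--spectrum yields precisely the condition that makes $f$ an $n$--stable equivalence.

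For the converse I would functorially factor $X \to \ast$ and $Y \to \ast$ in $O(n)\ecal_n^{\pi}$ to obtain trivial cofibrations $r_X \co X \to RX$ and $r_Y \co Y \to RY$ into $n\Omega$--spectra, together with a lift $Rf \co RX \to RY$ extending $f$. Since $r_X$ and $r_Y$ are $n\pi_*$--isomorphisms by construction, the forward direction already proved makes them $n$--stable equivalences, and two-out-of-three then makes $Rf$ an $n$--stable equivalence between $n\Omega$--spectra. Applying the defining property of $n$--stable equivalences with $E = RX$ produces some $g \in [RY,RX]_l$ with $g \circ Rf = \id_{RX}$ in $[RX,RX]_l$, and applying it with $E = RY$ promotes this to $Rf \circ g = \id_{RY}$ via the calculation $(Rf \circ g) \circ Rf = Rf = \id_{RY} \circ Rf$ together with injectivity of $(Rf)^*$. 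Thus $Rf$ is a levelwise homotopy equivalence, hence a levelwise weak equivalence, hence an $n\pi_*$--isomorphism; a final application of two-out-of-three to the naturality square recovers $f$ itself as an $n\pi_*$--isomorphism.

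The only genuinely non-formal step is establishing the bijection $[X,E]_l \cong [X,E]_{\pi}$ for $E$ an $n\Omega$--spectrum, and I expect this to be the main obstacle, albeit a mild one: it amounts to verifying that the identity adjunction between the two model structures is Quillen and that an $n\Omega$--spectrum needs no fibrant replacement on either side. Once this is spelled out, both implications of the corollary follow by routine manipulation of mapping sets and the universal property of fibrant replacement in the $n$--stable structure.
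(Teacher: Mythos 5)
Your argument is correct, and it is a legitimate filling-in of a proof the paper leaves implicit (the paper defers to the MMSS-style argument used to establish the $n$--stable model structure). Both directions work as you describe: the identity is indeed left Quillen from $O(n)\ecal_n^l$ to $O(n)\ecal_n^{\pi}$ (cofibrations coincide and levelwise equivalences are $n\pi_*$--equivalences), so the derived adjunction gives the natural bijection $[X,E]_l \cong [X,E]_{\pi}$ for $E$ an $n\Omega$--spectrum, and your converse via $\pi$--fibrant replacement plus the retraction/Yoneda argument is sound. Two small remarks. First, a point of precision: from $g \circ Rf = \id$ and $Rf \circ g = \id$ in the levelwise homotopy category you should conclude that $Rf$ is an isomorphism in $\ho(O(n)\ecal_n^l)$ and then invoke saturation of weak equivalences (a map inverted in the homotopy category is a weak equivalence) rather than call $Rf$ a levelwise homotopy equivalence, since $RX$ and $RY$ need not be cofibrant; the conclusion is unchanged. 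Second, your route differs mildly from the argument the paper is implicitly quoting: the MMSS-style proof uses an explicit telescope replacement $X \mapsto RX$ with $(RX)(V) = \hocolim_W \Omega^{nW}X(V \oplus W)$, shown directly to be both an $n\pi_*$--isomorphism and an $n$--stable equivalence into an $n\Omega$--spectrum, together with the paper's lemma that an $n\pi_*$--equivalence between $n\Omega$--spectra is a levelwise equivalence. You instead lean on the already-constructed $n$--stable model structure, replacing the telescope by model-categorical fibrant replacement and replacing one use of that lemma by the localisation bijection; this buys a cleaner, purely formal argument at the (harmless) cost of presupposing the proposition establishing $O(n)\ecal_n^{\pi}$, which is exactly what the corollary in the paper is entitled to assume.
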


\section{The equivalence between 
\texorpdfstring{$O(n)\ecal_n$}{O(n)E\_n}
and 
\texorpdfstring{$O(n)\ical \scal$}{O(n)IS}
}\label{sec:equiv}

We prove that the category $O(n) \ecal_n$ is Quillen 
equivalent to the category of $O(n)$--objects in the category of 
orthogonal spectra. The basic idea for this section (and the model structure of the previous section) comes from 
\cite[Section 3]{weiss95}. In that section Weiss constructs 
a spectrum $\Theta E$  
from the data of a `symmetric' object $E$ in $\ecal_n$ (such an 
object is precisely an object of $O(n) \ecal_n$). 
His notion of equivalence of spectra then corresponds
to our notion of $n\pi_*$--isomorphism. 

The essential concept is that if one has a spectrum 
$X$ with an action of $O(n)$, one can make an object $O(n)\ecal_n$, 
which at $V$ takes value $X(\rr^n \otimes V)$. 

\begin{definition}
The category of $O(n)$--equivariant objects in orthogonal spectra, 
written as $O(n) \ical \scal$, is the category of $O(n)$--objects 
in $\ecal_1$ and $O(n)$--equivariant maps. This category has a 
cofibrantly generated, proper, stable model structure where 
a map $f$ is a weak equivalence or fibration if and only if 
it is so when considered as a non-equivariant map 
in the stable model structure on orthogonal spectra. 
\end{definition}

Thus an object $X \in O(n) \ical \scal$ is a continuous
functor $X$ from $\jcal_1$ to based topological spaces such that
there is a group homomorphism from $O(n)$ into 
$\ecal_1 (X,X)$. It follows that each space $X(V)$ has a group 
action and the structure maps $S^W \smashprod X(V) \to X(V \oplus W)$
are $O(n)$--equivariant ($S^W$ has the trivial action). 
We can also describe $X$ as a functor 
$X \co \varepsilon^* \jcal_1 \to O(n) \Top$ of categories 
enriched over $O(n) \Top$.
Hence we have a continuous map 
\[
X(V,W) \co \jcal_1(V,W) \to \Top (X(V), X(W))^{O(n)}
\]

We want to find a map of enriched categories 
$\alpha_n \co \jcal_n \to \jcal_1$ so that any object $X$ of 
$\ecal_1$ can be made into an object $X \circ \alpha_n$ of 
$\ecal_n$.  We will then deal with equivariance and show that the 
$O(n)$--action on $X$ turns $X \circ \alpha_n$ into an object of 
$O(n) \ecal_n$. 

\begin{definition}
There is a map of topological categories 
$\alpha_n \co \jcal_n \to \jcal_1$
which sends the object $U$ to $\rr^n \otimes U = nU$
and on morphism spaces acts as 
\[
\begin{array}{rcl}
\alpha_n(U,V) \co \jcal_n(U,V) 
& \longrightarrow 
& \jcal_1(nU, nV) \\
(f,x) 
& \longmapsto 
&(\rr^n \otimes f, x)
\end{array}
\]
\end{definition}

Now consider some $X \in O(n) \ical \scal$, 
we have $X \circ \alpha_n$, a continuous functor from $\jcal_n$ to 
based spaces. The space $X(nU)$ has two different $O(n)$--actions 
on it, the first comes from the fact that for any $V$, $X(V)$
is an $O(n)$--space. For $\sigma \in O(n)$
we denote this self-map of $X(V)$ as 
$\sigma_{X(V)} \co X(V) \to X(V)$.
The second action comes from 
thinking of an element $\sigma$ of $O(n)$ as a map
\[
\sigma \otimes U \co \rr^n \otimes U \to \rr^n \otimes U
\]
Thus we have an element $(\sigma \otimes U,0) \in \jcal_1(nU,nU)$
applying $X$ to this gives a self-map of $X(nU)$, which we call
$X(\sigma \otimes U)$. 
By the definition of $X$, these two actions commute.

Now we are ready to make $\alpha_n^* X \in O(n) \ecal_n$. 
Ignoring equivariance, it is just the composite functor
$X \circ \alpha_n$. 
Hence at $V$, $(\alpha_n^* X)(V) = X(nV)$. 
Now we must equip it with an $O(n)$--action, 
we let $\sigma \in O(n)$ act on 
$(\alpha_n^* X)(V)$ by 
$X(\sigma \otimes U) \circ \sigma_{X(U)}$.
Thus $\alpha_n^* X$ takes values in $O(n)$--spaces. 
We must now check that the map
\[
\alpha_n^* X \co 
\jcal_n(U,V) \longrightarrow
\Top(\alpha_n^* X(U), \alpha_n^* X(V) ) 
\]
is an $O(n)$--equivariant map. 
This will imply that the structure map
below is $O(n)$--equivariant, where the domain has the diagonal 
action. Hence $\alpha_n^* X$ will be an object of $O(n) \ecal_n$. 
\[
S^{nV} \smashprod (\alpha_n^* X)(U)
\longrightarrow
(\alpha_n^* X)(U \oplus V)
\]

It is routine to check that the following diagram commutes.
\[
\xymatrix@C+0.3cm{
\mor_n(U,V) 
\ar[r]^{\alpha_n} \ar[d]^\sigma &
\mor_1(nU,nV) 
\ar[r]^(0.45){X} 
\ar[d]^{(\sigma^{-1} \otimes U)^*
\circ (\sigma \otimes V)_*} &
\Top(X(nU),X(nV)) 
\ar[d]^{(X(\sigma^{-1} \otimes U))^* 
\circ (X(\sigma \otimes V))_*} \\
\mor_n(U,V) 
\ar[r]^{\alpha_n}&
\mor_1(nU,nV) 
\ar[r]^(0.45){X} &
\Top(X(nU),X(nV)) 
}
\]
Applying $X \circ \alpha_n$ to any pair $(g,y) \in \mor_1(W,W')$
gives an $O(n)$--equivariant map $X(nW) \to X(nW')$, it follows that
the two expressions below are equal.
\[
\begin{array}{c}
X(\sigma \otimes W') \circ 
X(\rr^n \otimes g,y) \circ 
X(\sigma^{-1} \otimes W) \\
\phantom{n} \sigma_{X(W)} \circ 
X(\sigma \otimes W') \circ 
X(\rr^n \otimes g,y) \circ 
X(\sigma^{-1} \otimes W) \circ 
\sigma^{-1}_{X(W')}
\end{array}
\]

We now describe a left adjoint to $\alpha_n^*$. 
We can think of $\jcal_1(nU,V)$ as a topological
space with a left action of $\jcal_1$ and a right action
of $\jcal_n$. We can use this to put a $\jcal_1$ action onto
an object of $O(n) \ecal_n$. So let $Y \in O(n) \ecal_n$, then 
(using enriched coends) we define 
\[
(Y \smashprod_{\jcal_n} \jcal_1)(V) = \int^{U \in \jcal_n}
Y(U) \smashprod \jcal_1(nU, V)
\]
To see that this is the left adjoint of $\alpha_n^*$ 
is a formal exercise in manipulating coends.

\begin{proposition}\label{prop:eqspectra}
The adjoint pair 
\[
(-) \smashprod_{\jcal_n} \jcal_1 :
O(n) \ecal_n 
\overrightarrow{\longleftarrow}
O(n) \ical \scal
: \alpha_n^*
\]
is a
Quillen equivalence between $O(n) \ecal_n$ equipped with the 
$n$--stable model structure and $O(n) \ical \scal$.
\end{proposition}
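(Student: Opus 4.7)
The plan is to verify the adjunction is Quillen and then invoke the standard criterion that a Quillen pair is a Quillen equivalence provided the right adjoint reflects weak equivalences between fibrant objects and the derived unit is a weak equivalence on a set of cofibrant generators.

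The Quillen adjunction is straightforward. Since $(\alpha_n^* X)(V) = X(nV)$, the functor $\alpha_n^*$ automatically preserves levelwise fibrations and levelwise weak equivalences. For the stable structure, if $X$ is an $\Omega$-spectrum then the structure map $X(nV) \to \Omega^{nW} X(n(V \oplus W))$ is a weak equivalence (using $n(V \oplus W) = nV \oplus nW$), so $\alpha_n^* X$ is an $n\Omega$-spectrum and the homotopy pullback characterisation of fibrations between fibrant objects transfers. Next, $\alpha_n^*$ reflects weak equivalences between fibrant objects: given $f \co X \to Y$ between $\Omega$-spectra with $\alpha_n^* f$ an $n\pi_*$-equivalence, both $\alpha_n^* X$ and $\alpha_n^* Y$ are $n\Omega$-spectra, so $\alpha_n^* f$ is in fact a levelwise weak equivalence, whence $f(nV)$ is a weak equivalence for every $V$. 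Using the natural weak equivalences $X(U) \to \Omega^{(n-1)U} X(nU)$ provided by the $\Omega$-spectrum structure (and similarly for $Y$), it follows that $f(U)$ is a weak equivalence for every $U$, so $f$ is itself a levelwise, and hence a stable, equivalence.

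It remains to show the derived unit is a weak equivalence on a set of cofibrant generators. Take the generator $Y = \mor_n(V,-) \smashprod O(n)_+$. By the co-Yoneda lemma applied to the coend defining the left adjoint,
\[
Y \smashprod_{\jcal_n} \jcal_1 \;=\; \int^{U \in \jcal_n} \mor_n(V, U) \smashprod O(n)_+ \smashprod \jcal_1(nU, -) \;\cong\; \mor_1(nV, -) \smashprod O(n)_+,
\]
the free $O(n)$-equivariant orthogonal spectrum at level $nV$. Under this identification the unit at level $W$ becomes $\alpha_n \smashprod \id \co \mor_n(V, W) \smashprod O(n)_+ \to \mor_1(nV, nW) \smashprod O(n)_+$. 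Since $n\pi_*$-groups may be computed on underlying non-equivariant spectra, the task reduces to showing that the natural map $\mor_n(V, -) \to \alpha_n^* \mor_1(nV, -)$ is an $n\pi_*$-isomorphism.

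This final step is the main obstacle. Using the coend presentation $\mor_n(V, W) \cong \int^{A \in \ical} \ical(A \oplus V, W)_+ \smashprod S^{nA}$ from Proposition~\ref{prop:nsmodisjnspaces}, both $\mor_n(V, \rr^q)$ and $\mor_1(nV, \rr^{nq})$ are Thom spaces of rank-$n(q - \dim V)$ vector bundles over the Stiefel manifolds $\mor(V, \rr^q)$ and $\mor(nV, \rr^{nq})$ respectively, and the comparison map is induced by $f \mapsto \rr^n \otimes f$ on bases, together with the tautological identification $\rr^n \otimes (\rr^q - f(V)) = \rr^{nq} - (\rr^n \otimes f)(nV)$ on fibres. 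As $q \to \infty$ both Stiefel manifolds become increasingly highly connected and the comparison on bases becomes highly connected; a Freudenthal-style stabilisation argument directly parallel to the one used in the proof of Lemma~\ref{lem:lambdaconnect} then shows the induced map $\colim_q \pi_{nq+k}(\mor_n(V, \rr^q)) \to \colim_q \pi_{nq+k}(\mor_1(nV, \rr^{nq}))$ is an isomorphism for every $k$, completing the proof.
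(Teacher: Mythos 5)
Your overall strategy coincides with the paper's: establish the Quillen adjunction, show the right adjoint detects the relevant equivalences, and verify the derived unit on generators. The difference lies in how the last two steps are carried out. The paper handles detection with a single cofinality observation: $n\pi_k(\alpha_n^* Z)$ is computed from the cofinal subsequence $Z(\rr^{nq})$ of the colimit defining $\pi_k(Z)$, so $f$ is a $\pi_*$--isomorphism if and only if $\alpha_n^* f$ is an $n\pi_*$--isomorphism. Then, invoking stability, it checks the unit only on the single generator $O(n)_+ \smashprod nS = O(n)_+ \smashprod \mor_n(0,-)$, for which the unit is literally an isomorphism, since $\mor_n(0,W) = S^{nW} = \mor_1(0,nW)$; no connectivity estimate is needed at all. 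You instead check every free object $\mor_n(V,-)\smashprod O(n)_+$ and prove that $\mor_n(V,-) \to \alpha_n^* \mor_1(nV,-)$ is an $n\pi_*$--isomorphism by a Thom-space connectivity argument. That argument does work: the map of Thom spaces covers the inclusion of Stiefel manifolds $\mor(V,\rr^q) \to \mor(nV,\rr^{nq})$, which is roughly $(q-\dim V)$--connected, and the bundles have rank $n(q-\dim V)$, so the map is roughly $(n+1)(q-\dim V)$--connected, which eventually exceeds $nq+k$. So your route is correct but does more work than the paper, which buys itself the $V=0$ shortcut by reducing to one generator.

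There is one step you should patch. What the Quillen-equivalence criterion requires is that the \emph{derived} unit $Y \to \alpha_n^* \fibrep (Y \smashprod_{\jcal_n} \jcal_1)$ be an equivalence, whereas your computation treats the underived unit $Y \to \alpha_n^*(Y \smashprod_{\jcal_n} \jcal_1)$. Your detection statement, that $\alpha_n^*$ reflects weak equivalences between fibrant objects, does not bridge this, because $\mor_1(nV,-)\smashprod O(n)_+$ is not fibrant: you additionally need that $\alpha_n^*$ carries the stable equivalence to its fibrant replacement to an $n\pi_*$--isomorphism, i.e.\ that $\alpha_n^*$ \emph{preserves} $\pi_*$--isomorphisms. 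This is exactly what the paper's cofinality isomorphism $\pi_k(Z) \cong n\pi_k(\alpha_n^* Z)$ supplies (preservation and detection at once), so the gap is easy to fill, but as written the reduction from the derived unit to the map you actually analyse is missing.
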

\begin{proof}
It is routine to check that $\alpha_n^*$ preserves levelwise 
fibrations, levelwise trivial fibrations and takes 
fibrations in $O(n) \ecal_n^{\pi}$ to fibrations in 
$O(n) \ical \scal$. 

A straightforward argument using cofinality shows that 
a map $f \in O(n) \ical \scal$ is 
a $\pi_*$--isomorphism if and only if 
$\alpha_n^* f$ is an $n \pi_*$--isomorphism.  
All that remains 
is to show that the derived unit is an 
$n \pi_*$--isomorphism.  
Since our categories are stable, it suffices to do so for the 
generator of $O(n) \ecal_n$, which is $O(n)_+ \smashprod nS$.  
Writing $nS$ as $\mor_n(0,-)$, it is easy to see that 
\[
O(n)_+ \smashprod \mor_n(0,-) \smashprod_{\jcal_n} \jcal_1  
= O(n)_+ \smashprod \mor_1(0,-)
\]
and that $\alpha_n^*$ of this is 
$O(n)_+ \smashprod \mor_n(0,-)$. 
\end{proof}

We note that $\alpha_n^*$ also has a right adjoint, 
given in terms of an enriched end. However we have not been able 
to show that this right adjoint is part of a Quillen pair, hence we 
cannot pass directly from $\ecal_0$ to $O(n) \ical \scal$. 
Instead, we have a zig-zag of Quillen pairs, which is 
commonplace when working with model categories and no real disadvantage.

Now that we fully understand the $n$--stable model structure on 
$O(n) \ecal_n$, we should compare
it with the $n$--polynomial and $n$--homogeneous model structures on
$\ecal_0$.

\section{Inflation--induction as a Quillen functor}\label{sec:infind}

In this section we show that inflation--induction and 
restriction--orbits form a
Quillen pair between the $n$--stable model structure on $O(n) 
\ecal_n$ and $\ecal_0$ equipped with either the $n$--polynomial
or the $n$--homogeneous model structures.

\begin{lemma}
For $n \geqslant 0$, there is a Quillen adjunction
\[
\res_0^n/O(n) : O(n) \ecal_n^l
\overrightarrow{\longleftarrow}
\ecal_0
: \ind_0^n \varepsilon^*
\]
\end{lemma}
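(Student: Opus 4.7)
The plan is to verify the Quillen adjunction by showing that the right adjoint $\ind_0^n \varepsilon^*$ preserves fibrations and trivial fibrations. The adjunction itself is already established in the previous section, so only this half-plane of conditions remains.

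Since the fibrations and weak equivalences in $O(n) \ecal_n^l$ are defined by applying the forgetful functor $i^*$ and then checking levelwise on the underlying non-equivariant spaces, and since the same is true for the projective model structure on $\ecal_0$, it suffices to show that for each $E \in \ecal_0$, the underlying $\jcal_n$-space $i^* \ind_0^n \varepsilon^* E$ is a levelwise fibration (respectively, trivial fibration) whenever $E$ is. As was noted just after the definition of differentiation, this underlying $\jcal_n$-space coincides with the non-equivariant induction $\ind_0^n E$, and is therefore given by
\[
(\ind_0^n E)(V) = \nat_{\ecal_0}(\mor_n(V,-),E),
\]
where $\mor_n(V,-)$ is regarded as a $\jcal_0$-space via $\res_0^n$.

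The key input is Lemma \ref{lem:coffunct}, which tells us that $\mor_n(V,-)$ is cofibrant in $\ecal_0$. Since $\ecal_0$ is a topological model category (so that the enriched SM7 axiom holds), for any cofibrant $A \in \ecal_0$ the enriched hom functor $\nat_{\ecal_0}(A,-) \co \ecal_0 \to \Top$ is right Quillen, and in particular takes (trivial) fibrations to (trivial) fibrations of based spaces. Applying this with $A = \mor_n(V,-)$ for every $V \in \jcal_0$ shows that $\ind_0^n$ takes levelwise (trivial) fibrations in $\ecal_0$ to levelwise (trivial) fibrations of $\jcal_n$-spaces, which is exactly what was required.

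The verification is largely a formal consequence of Lemma \ref{lem:coffunct} and the enriched structure, so there is no substantial obstacle. The one point that warrants care is the bookkeeping between the $O(n)$-equivariant object $\ind_0^n \varepsilon^* E$ and its underlying non-equivariant functor $\ind_0^n E$: one must confirm that the levelwise model structure on $O(n)\ecal_n$ really does detect fibrations and weak equivalences on underlying spaces (as built into its definition), so that the check can be performed with the non-equivariant formula.
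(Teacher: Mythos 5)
Your proof is correct, but it verifies the Quillen condition on the opposite side from the paper. The paper's proof works with the left adjoint: it takes a generating (acyclic) cofibration $\mor_n(V,-) \smashprod O(n)_+ \smashprod i$ of $O(n)\ecal_n^l$, observes that $\res_0^n/O(n)$ sends it to $\mor_n(V,-) \smashprod i$ in $\ecal_0$, and concludes from Lemma \ref{lem:coffunct} (cofibrancy of $\mor_n(V,-)$ in $\ecal_0$) together with the topological structure that this is a (acyclic) cofibration; since it suffices to check the left adjoint on generators, the pair is Quillen. You instead check that the right adjoint preserves fibrations and trivial fibrations, using the identification $i^*\ind_0^n \varepsilon^* E = \ind_0^n E$ with $(\ind_0^n E)(V) = \nat_{\ecal_0}(\mor_n(V,-),E)$, the fact that fibrations and weak equivalences in $O(n)\ecal_n^l$ are detected on underlying spaces, and the dual form of the pushout--product axiom: $\nat_{\ecal_0}(A,-)$ preserves (trivial) fibrations for cofibrant $A$. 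The two arguments are adjoint to one another and both hinge on exactly the same inputs, namely Lemma \ref{lem:coffunct} and the topological model structure on $\ecal_0$. The paper's route requires the small computation that the orbit functor collapses the $O(n)_+$ factor of the generators, which yours avoids; your route instead leans on the enriched SM7 axiom for $\nat_{\ecal_0}$ rather than only the tensoring with spaces, and has the mild advantage of making visible why $\ind_0^n \varepsilon^*$ is levelwise right Quillen, which is the form in which the functor is used later. Your closing remark about the bookkeeping between $\ind_0^n \varepsilon^* E$ and $\ind_0^n E$ is exactly the point recorded in the paper after the definition of differentiation, so there is no gap.
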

\begin{proof}
A generating cofibration takes form
$\mor_n(V,-) \smashprod O(n)_+ \smashprod i$, where $i$ is a 
generating
cofibration for the model category of based spaces. Applying the
left adjoint to this gives
$\mor_n(V,-) \smashprod i$. 
By lemma \ref{lem:coffunct}, 
$\mor_n (V,-)$ is a cofibrant object of $\ecal_0$. It follows 
that the left adjoint preserves cofibrations. 
The case of acyclic cofibrations is identical.
\end{proof}

\begin{lemma}
For $n \geqslant 0$, there is a Quillen adjunction
$$
\res_0^n/O(n) : O(n) \ecal_n^{\pi}
\overrightarrow{\longleftarrow}
n \poly \ecal_0
: \ind_0^n \varepsilon^*
$$
\end{lemma}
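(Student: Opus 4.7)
The plan is to exploit the fact that both $O(n)\ecal_n^\pi$ and $n\poly\ecal_0$ are left Bousfield localisations of the model structures for which we have already established the Quillen pair (namely $O(n)\ecal_n^l$ and $\ecal_0$). Since left Bousfield localisation leaves the class of cofibrations unchanged, the left adjoint $\res_0^n/O(n)$ automatically still preserves cofibrations, and also acyclic fibrations (the latter being characterised by lifting against the same set of cofibrations). So the only thing to verify is that $\res_0^n/O(n)$ also takes acyclic cofibrations in $O(n)\ecal_n^\pi$ to weak equivalences in $n\poly\ecal_0$.

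By the standard criterion for when a Quillen pair descends to Bousfield localisations (\cite[Theorem 3.3.20]{hir03}), this is equivalent to requiring that the right adjoint $\ind_0^n\varepsilon^*$ sends fibrant objects of $n\poly\ecal_0$ to fibrant objects of $O(n)\ecal_n^\pi$. The fibrant objects of $n\poly\ecal_0$ are the $n$--polynomial functors (Proposition \ref{prop:bflocal}), while the fibrant objects of $O(n)\ecal_n^\pi$ are the $n\Omega$--spectra. So the single thing I must verify is: if $E \in \ecal_0$ is $n$--polynomial, then $\ind_0^n\varepsilon^* E$ is an $n\Omega$--spectrum, i.e., for every $V, W \in \jcal_0$, the adjoint structure map
\[
\ind_0^n E(V) \longrightarrow \Omega^{nW}\ind_0^n E(V \oplus W)
\]
is a weak equivalence of spaces.

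This is essentially the content of Proposition \ref{prop:npolynstable}, which handles the case $W = \rr$. For general $W$, I would iterate: for any $k \geqslant 0$, repeated application of the proposition gives weak equivalences
\[
\ind_0^n E(V) \overset{\simeq}{\longrightarrow} \Omega^n \ind_0^n E(V \oplus \rr) \overset{\simeq}{\longrightarrow} \cdots \overset{\simeq}{\longrightarrow} \Omega^{nk} \ind_0^n E(V \oplus \rr^k),
\]
and then for a $W$ of dimension $k$ I pick any linear isometric isomorphism $\rr^k \to W$ and transport along it using functoriality of $\ind_0^n\varepsilon^*$ (the naturality, together with the $O(k)$--invariance coming from the $O(n)$--equivariant structure, guarantees the identification is coherent with the honest adjoint structure map).

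The main obstacle is essentially already overcome: Proposition \ref{prop:npolynstable} does all the work. The only thing requiring care is the reduction from arbitrary $W$ to $W = \rr^k$, but this is a straightforward functoriality argument and should not present a genuine difficulty. Everything else is a clean invocation of the localisation machinery together with the previous lemma.
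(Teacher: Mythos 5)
Your proposal is correct and follows essentially the same route as the paper: both arguments use the standard descent criterion for left Bousfield localisations (the paper via \cite[Theorem 3.1.6 and Proposition 3.1.18]{hir03}) to reduce the lemma to showing that $\ind_0^n \varepsilon^*$ sends $n$--polynomial functors to $n\Omega$--spectra, which is exactly Proposition \ref{prop:npolynstable}. Your explicit iteration from $W=\rr$ to a general $W$ merely fills in a step the paper leaves implicit.
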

\begin{proof}
The identity functor is a left Quillen functor from 
$\ecal_0$ to $n \poly \ecal_0$. So 
$\res_0^n/O(n)$ is a left Quillen functor from 
$O(n) \ecal_n$ to the $n$--polynomial model structure on $\ecal_0$. 
To check that this Quillen functor passes 
to the $n$--stable model structure we apply 
\cite[Theorem 3.1.6 and Proposition 3.1.18]{hir03}.
We must show that $\ind_0^n \varepsilon^*$
takes $n$--polynomial objects of $\ecal_0$ to 
$n$--stable objects of $O(n) \ecal_n$. 
We have done so in proposition \ref{prop:npolynstable}.
\end{proof}

Composing this adjunction with the change of 
model structures adjunction between 
$n \poly \ecal_0$ and $m \poly \ecal_0$, for $n > m$, 
gives a Quillen pair between
$O(n) \ecal_n^{\pi}$ and $m \poly \ecal_0$. If $X$ 
is an $m$--polynomial functor
then, by proposition \ref{prop:mpolynpoly}, we know that it is 
$(n-1)$--polynomial. Hence $\ind_0^n \varepsilon^* X$ is levelwise 
contractible by corollary \ref{cor:npolydiff}. Therefore, 
on homotopy categories, the derived functor of 
$\ind_0^n \varepsilon^*$ sends every object of 
$\ho(m \poly \ecal_0)$ to the terminal object of 
$\ho( O(n) \ecal_n^{\pi})$.

\begin{lemma}\label{lem:derivedfun}
The left derived functor of $\res_0^n /O(n)$
is levelwise weakly equivalent to 
$EO(n)_+ \smashprod_{O(n)} \res_0^n (-)$.
\end{lemma}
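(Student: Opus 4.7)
The plan is to compute the left derived functor by applying $\res_0^n/O(n)$ to a cofibrant replacement, and to identify ordinary orbits with homotopy orbits using the freeness of the $O(n)$-action on cofibrant objects. First I would take a cofibrant replacement $q \co QX \to X$ in $O(n)\ecal_n$, noting that the $n$-stable and levelwise structures share the same cofibrations, so either model structure may be used. By definition the left derived functor is then represented by $\res_0^n/O(n)(QX)$. The collapse $EO(n)_+ \to S^0$ induces, for any $Y \in O(n)\ecal_n$, a natural transformation
\[
EO(n)_+ \smashprod_{O(n)} \res_0^n(Y) \longrightarrow \res_0^n/O(n)(Y),
\]
and the goal is to show that this is a levelwise weak equivalence when $Y = QX$.

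The key structural observation is that every cofibrant object of $O(n)\ecal_n$ is levelwise free as a based $O(n)$-space. The generating cofibrations have the form $O(n)_+ \smashprod \mor_n(V,-) \smashprod i$, built from the $G_+ \smashprod (-)$ construction of section \ref{sec:groupstuff}: evaluated at any inner product space $W$, such a cell carries the $O(n)$-action given by left multiplication on the $O(n)_+$ factor, which is free away from the basepoint. Since pushouts and transfinite compositions of maps between levelwise free $O(n)$-spaces remain levelwise free, any cofibrant $QX$ is levelwise free. The classical fact that $EO(n)_+ \smashprod_{O(n)} A \to A/O(n)$ is a weak equivalence for any based free $O(n)$-space $A$ then yields, applied at each level of $QX$, a levelwise weak equivalence
\[
EO(n)_+ \smashprod_{O(n)} \res_0^n(QX) \overset{\simeq}{\longrightarrow} \res_0^n/O(n)(QX).
\]

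To conclude, I would invoke the homotopy invariance of $EO(n)_+ \smashprod_{O(n)} (-)$ as a model for homotopy orbits: it carries levelwise weak equivalences of underlying spaces with $O(n)$-action to levelwise weak equivalences. Applied to $q \co QX \to X$, this produces
\[
EO(n)_+ \smashprod_{O(n)} \res_0^n(QX) \overset{\simeq}{\longrightarrow} EO(n)_+ \smashprod_{O(n)} \res_0^n(X),
\]
and composing with the previous equivalence identifies the left derived functor with $EO(n)_+ \smashprod_{O(n)} \res_0^n(-)$, as required. The main obstacle is bookkeeping of equivariance: the morphism spaces $\mor_n(V,W)$ already carry a non-trivial $O(n)$-action coming from the vector-bundle data in $\gamma_n$, while the cofibrant cells also pick up a second, free action from the $G_+ \smashprod (-)$ construction, and the action on the cell at level $W$ is the combination of the two. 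Verifying that this combined action on cells (and on arbitrary transfinite cell attachments) is genuinely free, rather than merely free up to homotopy, is the step that requires the most care.
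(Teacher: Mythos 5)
Your proposal is correct and follows essentially the same route as the paper's proof: replace $X$ cofibrantly (the map $\cofrep X \to X$ being a levelwise acyclic fibration since the stable and levelwise structures share cofibrations), use levelwise freeness of the cofibrant object to identify strict orbits with $EO(n)_+ \smashprod_{O(n)} (-)$, and then use homotopy invariance of homotopy orbits to compare with $EO(n)_+ \smashprod_{O(n)} \res_0^n X$. The only difference is that you spell out the cell-attachment argument for levelwise freeness, which the paper simply asserts.
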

\begin{proof}
Let $X$ be an object of $O(n) \ecal_n^{\pi}$, then 
$\cofrep X \to X$ is a levelwise acyclic fibration
and $\cofrep X$ is levelwise free. Hence the two maps
below are objectwise weak equivalences of $\ecal_0$. 
\[
\begin{array}{rcl}
EO(n)_+ \smashprod_{O(n)} \res_0^n (\cofrep X) 
& \longrightarrow &
\res_0^n (\cofrep X)/ O(n)\\
EO(n)_+ \smashprod_{O(n)} \res_0^n (\cofrep X) 
& \longrightarrow &
EO(n)_+ \smashprod_{O(n)} \res_0^n X
\end{array}
\]
\end{proof}

\begin{lemma}
For $n \geqslant 0$ there is a Quillen adjunction
$$
\res_0^n/O(n) : O(n) \ecal_n^{\pi}
\overrightarrow{\longleftarrow}
n \homog \ecal_0
: \ind_0^n \varepsilon^*
$$
\end{lemma}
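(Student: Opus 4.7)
The plan is to bootstrap from the preceding lemma's Quillen pair between $O(n) \ecal_n^{\pi}$ and $n \poly \ecal_0$, exploiting the fact that $n \homog \ecal_0$ is obtained from $n \poly \ecal_0$ by right Bousfield localisation at the set $K_n = \{\mor_n(V,-) \mid V \in \jcal_0\}$. I will verify the Quillen condition on $\res_0^n/O(n)$ by separately checking preservation of cofibrations and of acyclic cofibrations.

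The acyclic cofibration clause is essentially automatic. By construction of the right Bousfield localisation, the fibrations of $n \homog \ecal_0$ coincide with those of $n \poly \ecal_0$. Since acyclic cofibrations in any model category are characterised by the left lifting property against all fibrations, the acyclic cofibrations of $n \homog \ecal_0$ and $n \poly \ecal_0$ also coincide. The preceding lemma already tells us that $\res_0^n/O(n)$ sends acyclic cofibrations of $O(n) \ecal_n^{\pi}$ to acyclic cofibrations of $n \poly \ecal_0$, and hence to acyclic cofibrations of $n \homog \ecal_0$.

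For cofibrations, since $\res_0^n/O(n)$ is a left adjoint it suffices to check on a generating set. A generating cofibration of $O(n) \ecal_n^{\pi}$ has the form $\mor_n(V,-) \smashprod O(n)_+ \smashprod i$ for some $V$ and some generating cofibration $i$ of based spaces; a direct calculation, using that the $O(n)$--action on $\mor_n(V,-) \smashprod O(n)_+$ is diagonal and free on the $O(n)_+$--factor, identifies its image under $\res_0^n/O(n)$ with $\mor_n(V,-) \smashprod i$. Since $\mor_n(V,-)$ lies in $K_n$, it is cofibrant in $n \homog \ecal_0$ by construction of the right Bousfield localisation. As $n \homog \ecal_0$ is a topological model category, the smash product of a cofibrant object with a cofibration of based spaces is again a cofibration, so $\mor_n(V,-) \smashprod i$ is a cofibration of $n \homog \ecal_0$, as required.

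The main conceptual point, and the only real obstacle to a purely formal derivation, is the observation that acyclic cofibrations are preserved under right Bousfield localisation, because only the (unchanged) fibrations enter their lifting characterisation. Once that is recognised, the remaining work reduces to the cofibration clause, where the topological enrichment on $n \homog \ecal_0$ combined with the cofibrancy of the objects of $K_n$ in the localised structure delivers exactly what is needed.
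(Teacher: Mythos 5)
Your proposal is correct, but it takes a genuinely different route from the paper. The paper argues on the right adjoint: it observes that a map $f$ between fibrant objects of $n \homog \ecal_0$ is a weak equivalence exactly when $\ind_0^n \varepsilon^* f$ is a levelwise weak equivalence, and then concludes by citing Hirschhorn's criterion for extending a Quillen pair across a Bousfield localisation (\cite{hir03}, Proposition 3.1.18), riding on the already-established adjunction with $n \poly \ecal_0$. You instead verify the left Quillen conditions directly: the acyclic cofibration clause is inherited from the $n$--polynomial case because right localisation leaves the fibrations, and hence the acyclic cofibrations, unchanged; and for cofibrations you compute the image of the generators, $\res_0^n \mor_n(V,-) \smashprod i$, and use that $\mor_n(V,-)$ lies in $K_n$, hence is colocal and so cofibrant in $n \homog \ecal_0$, together with the pushout--product (topological) axiom for $n \homog \ecal_0$. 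Both proofs lean on the preceding lemma; the paper's is shorter and never has to analyse the cofibrations of the colocalised structure, while yours makes explicit why the colocalising set $K_n$ is precisely what is needed for $\res_0^n/O(n)$ to remain left Quillen after cellularisation. Two points in your cofibration step deserve explicit citation rather than the phrase ``by construction'': that $n \homog \ecal_0$ is a topological model category (asserted in the proposition constructing it, via the localisation machinery), and that $\mor_n(V,-)$ is cofibrant in $\ecal_0$ (lemma \ref{lem:coffunct}), which is what guarantees the elements of $K_n$ are themselves colocal objects rather than merely having colocal cofibrant replacements. Neither is a gap, since both are available in the paper, but your argument does depend on them.
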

\begin{proof}
We know that a map $f$ 
is a weak equivalence 
between fibrant objects
in the $n$--homogeneous model structure if and only if 
$\ind_0^n \varepsilon^* f$
is a levelwise weak equivalence. 
Hence this adjunction is a 
Quillen pair by 
\cite[Proposition 3.1.18]{hir03}.
\end{proof}

We draw these Quillen pairs to show how they are related, left 
adjoints will be either on the top, or on the left of a pair.  
\[
\xymatrix@C+1cm{
O(n)\ecal_n^{l}
\ar@<+1ex>[r]^(0.55){\res_0^n/O(n)}
\ar@<-1ex>[d]_{1}
&
\ecal_0
\ar@<+1ex>[l]^(0.4){\ind_0^n \varepsilon^*}
\ar@<+1ex>[r]^(0.4){1}
&
n \poly \ecal_0
\ar@<+1ex>[l]^(0.6){1}
\ar@<+1ex>[d]^{1}   
\\
O(n)\ecal_n^{\pi}
\ar@<+1ex>[rr]^{\res_0^n/O(n)}
\ar@<-1ex>[u]_{1}
& 
&
n \homog \ecal_0
\ar@<+1ex>[ll]^{\ind_0^n \varepsilon^*}
\ar@<+1ex>[u]^{1}   \\
}
\]

\section{The classification of \texorpdfstring{$n$}{n}-homogeneous functors}\label{sec:tower}

Now we prove that the homotopy category of $O(n)\ecal_n^{\pi}$ is the
homotopy category of $n$--homogeneous functors. 
Throughout this section we will keep the diagram of Quillen functors below in mind.
\[
\xymatrix@C+1cm{
n \homog \ecal_0
\ar@<-1 ex>[r]_(0.6){\ind_0^n \varepsilon^*}
&
O(n)\ecal_n^{\pi}
\ar@<-1ex>[l]_(0.4){\res_0^n/O(n)}
\ar@<+1ex>[r]^{(-)\smashprod_{\jcal_n}\jcal_1}
&
O(n)\ical \scal
\ar@<+1ex>[l]^{\alpha_n^*}
}
\]

\begin{theorem}
For $n \geqslant 0$, the Quillen adjunction
$$
\res_0^n/O(n) : O(n) \ecal_n^{\pi}
\overrightarrow{\longleftarrow}
n \homog \ecal_0
: \ind_0^n \varepsilon^*
$$
is a Quillen equivalence
\end{theorem}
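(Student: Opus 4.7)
The plan is to verify both standard conditions for this adjunction to be a Quillen equivalence: that the right adjoint $\ind_0^n \varepsilon^*$ reflects weak equivalences between fibrant objects, and that the derived unit is an $n\pi_*$-equivalence on every cofibrant object of $O(n)\ecal_n^\pi$.

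The first of these is nearly immediate. By the construction of $n\homog\ecal_0$ as a right Bousfield localisation of $n\poly\ecal_0$, a weak equivalence between fibrant objects there is precisely a map $f$ for which $\ind_0^n T_n f$ is a levelwise weak equivalence. Since fibrant objects are already $n$-polynomial, $T_n$ acts as an equivalence on them, reducing the condition to $\ind_0^n \varepsilon^* f$ being a levelwise weak equivalence in $O(n)\ecal_n^l$. By proposition \ref{prop:npolynstable}, the source and target of $\ind_0^n \varepsilon^* f$ are $n\Omega$-spectra, between which levelwise weak equivalences coincide with $n\pi_*$-equivalences. Thus $\ind_0^n \varepsilon^* f$ is a weak equivalence in $O(n)\ecal_n^\pi$ exactly when $f$ is one in $n\homog\ecal_0$.

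For the derived unit, let $X$ be cofibrant in $O(n)\ecal_n^\pi$. By lemma \ref{lem:derivedfun}, a fibrant replacement $R(\res_0^n/O(n)(X))$ in $n\homog\ecal_0$ is objectwise equivalent to $T_n(EO(n)_+ \smashprod_{O(n)} \res_0^n X)$, so we must prove that
\[
X \longrightarrow \ind_0^n \varepsilon^* \, T_n\!\left( EO(n)_+ \smashprod_{O(n)} \res_0^n X \right)
\]
is an $n\pi_*$-equivalence. The class of cofibrant $X$ for which this holds is closed under cofibre sequences and sequential homotopy colimits: the left adjoint $\res_0^n/O(n)$ preserves them as a left Quillen functor, while $\ind_0^n \varepsilon^* T_n$ preserves them because $O(n)\ecal_n^\pi$ is stable (so finite homotopy colimits coincide with finite homotopy limits) and the ends defining $\ind_0^n$ commute with the relevant filtered colimits. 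By the cellular structure of $O(n)\ecal_n^\pi$, it therefore suffices to verify the derived unit is an $n\pi_*$-equivalence for $X$ of the form $O(n)_+ \smashprod \mor_n(V,-)$ with $V \in \jcal_0$.

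For such $X$, $\res_0^n/O(n)(X) = \mor_n(V,-)$, which one verifies to be $n$-polynomial so that $T_n$ acts trivially up to equivalence. By a Yoneda computation, $\ind_0^n \varepsilon^* \mor_n(V,-)$ evaluated at $W$ is a mapping object whose homotopy type must be identified with $O(n)_+ \smashprod \mor_n(V,W)$ once the Borel construction is accounted for. The main obstacle is precisely this identification, which requires careful tracking of the two $O(n)$-actions on $\mor_n(V,W)$, the intrinsic one from the $O(n)$-equivariant vector bundle $\gamma_n(V,W)$ and the one arising after equipping $\mor_n(V,-)$ with the trivial action via $\varepsilon^*$, and verification that the Borel construction $EO(n)_+ \smashprod_{O(n)} -$ reconciles them. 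This is essentially the content of Weiss's homotopy colimit calculation in \cite[Theorem 4.1]{weiss95}, which the authors have already indicated they invoke rather than reprove.
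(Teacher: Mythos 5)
Your overall strategy — check that the right adjoint reflects weak equivalences between fibrant objects and that the derived unit is an $n\pi_*$--equivalence on cofibrant objects — is the same criterion the paper uses, and your first half (reduction via proposition \ref{prop:npolynstable} and the fact that $n\pi_*$--equivalences between $n\Omega$--spectra are levelwise) is fine. The problem is in the treatment of the derived unit. Your cellular reduction lands on the generators $O(n)_+ \smashprod \mor_n(V,-)$, whose image under $\res_0^n/O(n)$ is $\res_0^n \mor_n(V,-)$, and you then assert that this functor is $n$--polynomial so that $T_n$ can be ignored. That assertion is false: already for $n=1$, $V=0$ one has $\res_0^1\mor_1(0,W)=S^W$, whose $T_0$ is objectwise contractible, so $T_1$ of it is $1$--homogeneous and hence objectwise an infinite loop space; since $S^2$ is not an infinite loop space, $W\mapsto S^W$ is not $1$--polynomial. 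In general the whole difficulty of the theorem is precisely the computation of $T_n$ of (the Borel construction on) such objects and of its $n$--th derivative; this is the content of Weiss's Examples 6.4 and 5.7 (together with the properties of $T_n$ from his Theorem 6.3 and errata), not of Theorem 4.1, which is the homotopy colimit decomposition of $S\gamma_{n+1}$ used for other purposes. By citing the wrong result and declaring $T_n$ trivial on the generators, the proposal skips exactly the step that carries the mathematical weight.

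There is a second gap in the reduction itself: to conclude that the class of cofibrant $X$ for which the derived unit is an equivalence is closed under cofibre sequences, you need the composite derived functor $X\mapsto \ind_0^n\varepsilon^* T_n(EO(n)_+\smashprod_{O(n)}\res_0^n X)$ to send cofibre sequences to $n\pi_*$--(co)fibre sequences. The right adjoint preserves fibre sequences, but knowing that cofibre sequences in $n\homog\ecal_0$ may be replaced by fibre sequences is tantamount to the stability of $\ho(n\homog\ecal_0)$, which is part of what the theorem is proving; as written the closure argument is circular (or at least requires a separate argument, e.g. that $n$--homogeneous functors convert cofibrations to quasifibrations). The paper avoids both issues by a different route: it uses the Quillen equivalence of proposition \ref{prop:eqspectra} to replace an arbitrary cofibrant $X$, up to $n\pi_*$--isomorphism, by $\alpha_n^*\Psi$ for an $\Omega$--spectrum $\Psi$ with $O(n)$--action, and then identifies $T_n(EO(n)_+\smashprod_{O(n)}\res_0^n\alpha_n^*\Psi)$ with Weiss's functor $F\Psi$ (Example 6.4) and computes $\ind_0^n\varepsilon^* F\Psi\simeq\alpha_n^*\Psi$ (Example 5.7). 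If you want to keep your cell-by-cell approach you would still need those two computations (applied to the free objects), plus an honest justification of the closure properties, so nothing is gained over the paper's argument.
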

\begin{proof}
The derived functor of the right Quillen $ \ind_0^n \varepsilon^*$ reflects equivalence in the $n$--homogeneous structure.
Thus the only thing to show is that the derived unit is an equivalence in the stable structure. 

Given some cofibrant $X \in O(n) \ecal_n$, by proposition \ref{prop:eqspectra}, we have an $n \pi_*$--isomorphisms
\[
X \longrightarrow X_f \longrightarrow 
\alpha_n^* \fibrep (X \smashprod_{\jcal_n} \jcal_1  )
\]
where $\fibrep$ denotes fibrant replacement in the stable model 
category on $O(n) \ical \scal$. Let $\Psi$ denote 
$\fibrep X \smashprod_{\jcal_n} \jcal_1$, this is an $\Omega$--spectrum with
an action of $O(n)$. 
Let $\cofrep$ denote cofibrant replacement in $O(n) \ecal_n^\pi$, 
then one can construct a commutative square
\[
\xymatrix{
X \ar[r]^1 \ar[d]^2 &
\cofrep \alpha_n^* \Psi \ar[d]^3 \\
\ind_0^n \varepsilon^* T_n \res_0^n X/O(n)
\ar[r]^4 &
\ind_0^n \varepsilon^* T_n \res_0^n (\cofrep \alpha_n^* \Psi ) /O(n) \\
}
\]
We want to know that map 2 is an $n$--stable equivalence. 
Map 1 is a stable equivalence, as is 4, since 
derived functors preserve all weak equivalences.
Thus we must show that map 3 is a stable equivalence. 
Lemma \ref{lem:derivedfun} tells us that the codomain of 
map 3 is levelwise weakly equivalent to 
\[
\ind_0^n \varepsilon^* T_n (EO(n)_+ \smashprod_{O(n)} \res_0^n 
(\alpha_n^* \Psi )
\]
The object $T_n (EO(n)_+ \smashprod_{O(n)} \res_0^n 
(\alpha_n^* \Psi ) \in \ecal_0$ is identified in 
\cite[Example 6.4]{weiss95}. 
It is shown to be equivalent to 
the object $F \Psi$ of $\ecal_0$ defined by
\[
V \longmapsto \hocolim_k \Omega^{nk} 
[EO(n)_+ \smashprod_{O(n)} (\Psi(\rr^k) \smashprod S^{nV})]
\]
where $\theta \smashprod S^{nV}$ has the diagonal action. 
This calculation is performed via a connectivity argument based on 
the interplay between homotopy--orbits and the functor 
$\Omega^\infty \Sigma^\infty$ on equivariant spaces. 

The derived $n$--stable equivalence then follows from the 
calculation of 
$\ind_0^n \varepsilon^* F \Psi$, as 
given in \cite[Example 5.7]{weiss95}. 
This example proves that 
$\ind_0^m \varepsilon^* F \Psi$ is given by
\[
V \longmapsto \hocolim_k \Omega^{nk} 
[EO(n-m)_+ \smashprod_{O(n-m)} (\Psi(\rr^k) \smashprod S^{nV})]
\]
We are interested in the case $m=n$, where we see that 
$\hocolim_k \Omega^{nk} (\Psi(\rr^k) \smashprod S^{nV})$
is weakly equivalent to $(\alpha_n^* \Psi)(V)$. 
\end{proof}

We wish to remark that despite extensive efforts, the authors
have been unable to improve upon the two examples
quoted above.
It is interesting to note how closely the derivatives 
of $F \Psi$ are related to the 
generalised restriction--orbit functors
$\res_m^n \Psi/O(n-m)$.

Composing the right derived functor of $\ind_0^n \varepsilon^*$ with the left derived functor of $(-)\smashprod_{\jcal_n}\jcal_1$ recovers the classification in \cite[Theorem 7.3]{weiss95}.

\begin{corollary}
There is an equivalence
of homotopy categories:
$$\xymatrix@C+1.6cm{
\ho (O(n)\ical \scal)
\ar@<+1ex>[r]
&
\ho (n\homog \ecal_0)
\ar@<+1ex>[l]
}
$$
\end{corollary}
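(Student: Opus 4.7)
The plan is simply to compose the two Quillen equivalences that have just been established. On one side we have the Quillen equivalence
\[
(-) \smashprod_{\jcal_n} \jcal_1 :
O(n) \ecal_n^{\pi}
\overrightarrow{\longleftarrow}
O(n) \ical \scal
: \alpha_n^*
\]
from Proposition \ref{prop:eqspectra}, and on the other side we have the Quillen equivalence
\[
\res_0^n/O(n) : O(n) \ecal_n^{\pi}
\overrightarrow{\longleftarrow}
n \homog \ecal_0
: \ind_0^n \varepsilon^*
\]
just proved. Passing to homotopy categories, each Quillen equivalence induces an adjoint equivalence of homotopy categories, by \cite[Proposition 1.3.13]{hov99} or \cite[Theorem 8.5.23]{hir03}.

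The composed equivalence is then obtained by running the two adjoint equivalences in series through the common vertex $\ho(O(n)\ecal_n^{\pi})$. Concretely, in one direction one takes the right derived functor of $\ind_0^n \varepsilon^*$ followed by the left derived functor of $(-)\smashprod_{\jcal_n} \jcal_1$; in the other direction, the right derived functor of $\alpha_n^*$ followed by the left derived functor of $\res_0^n/O(n)$. Since both individual derived functors are equivalences of categories, the composite is as well, and this produces the stated adjoint equivalence between $\ho(O(n)\ical\scal)$ and $\ho(n\homog\ecal_0)$.

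There is no real obstacle here: the content has already been absorbed by the two Quillen equivalences. The only thing worth checking is bookkeeping of which functor is on which side — the left adjoint $\res_0^n/O(n)$ goes from $O(n)\ecal_n^{\pi}$ to $n\homog\ecal_0$ and the left adjoint $(-)\smashprod_{\jcal_n}\jcal_1$ goes from $O(n)\ecal_n^{\pi}$ to $O(n)\ical\scal$, so the composite equivalence of homotopy categories is between $\ho(O(n)\ical\scal)$ and $\ho(n\homog\ecal_0)$ as claimed. This recovers the classification theorem \cite[Theorem 7.3]{weiss95}, but now upgraded from an equivalence of homotopy categories to a zig-zag of Quillen equivalences.
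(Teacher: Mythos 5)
Your proposal is correct and is exactly the paper's argument: the corollary is obtained by composing the Quillen equivalence of Proposition \ref{prop:eqspectra} with the Quillen equivalence just proved, passing to derived functors through the common vertex $\ho(O(n)\ecal_n^{\pi})$ (the paper phrases this as composing the right derived functor of $\ind_0^n \varepsilon^*$ with the left derived functor of $(-)\smashprod_{\jcal_n}\jcal_1$, recovering \cite[Theorem 7.3]{weiss95}). Your bookkeeping of which adjoint goes in which direction matches the paper, so there is nothing to add.
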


Now we can show how an $n$--polynomial object $X$ is made from an
$(n-1)$--polynomial object and an $n$--homogeneous object. 
The following is \cite[Theorem 9.1]{weiss95}.
For $X \in \ecal_0$, inflation--induction and the left adjoint of $\alpha_n^*$
determine an object of $O(n) \ical \scal$, 
denote this object by $\Psi^n_X$.
 
\begin{theorem}\label{thm:fibre}
For any $X \in \ecal_0$, $n>0$ and $V \in \jcal_0$, there is a homotopy fibration sequence
\[
\Omega^\infty [EO(n)_+ \smashprod_{O(n)} (\Psi^n_X \smashprod S^{nV})]
\longrightarrow (T_n X)(V)
\longrightarrow (T_{n-1} X)(V)
\]
\end{theorem}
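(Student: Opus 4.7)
The plan is to identify the homotopy fibre of the natural map $T_n X \to T_{n-1} X$ as an $n$-homogeneous functor whose classifying $O(n)$-spectrum is $\Psi^n_X$, and then invoke the explicit formula extracted in the proof of the main classification theorem.

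First, I would produce the map $T_n X \to T_{n-1} X$ and name its homotopy fibre. By Proposition \ref{prop:mpolynpoly}, $T_{n-1} X$ is $n$-polynomial and hence fibrant in $n \poly \ecal_0$; functoriality of fibrant replacement then yields the desired natural map. Let $D_n X$ denote its homotopy fibre in $\ecal_0$.

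Second, I would show $D_n X$ is $n$-homogeneous. The $n$-polynomial property follows from Lemma \ref{lem:fibreisnpoly} applied to $T_n X \to T_{n-1} X$, using that $\ind_0^{n+1} T_{n-1} X$ is objectwise acyclic by Corollary \ref{cor:npolydiff}. For $T_{n-1}$-acyclicity, apply $T_{n-1}$ to the fibre sequence; because $T_{n-1}$ is a sequential homotopy colimit of finite homotopy limits, it preserves homotopy fibre sequences. The proof of Proposition \ref{prop:mpolynpoly} shows every $S_n$-equivalence is an $S_{n-1}$-equivalence, so $X \to T_n X$ becomes an equivalence under $T_{n-1}$, forcing $T_n X \to T_{n-1} X$ to do so as well; hence $T_{n-1} D_n X \simeq \ast$.

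Third, I would identify the classifying $O(n)$-spectrum of $D_n X$ as $\Psi^n_X$. Since all three terms of the fibre sequence are fibrant in $n \poly \ecal_0$, applying the right Quillen functor $\ind_0^n \varepsilon^*$ yields a homotopy fibre sequence in $O(n)\ecal_n^{\pi}$ whose last term is acyclic by Corollary \ref{cor:npolydiff}. As $O(n)\ical\scal$ is stable and $(-) \smashprod_{\jcal_n} \jcal_1$ is left Quillen, a further cofibrant replacement and this functor preserve the resulting fibre-cofibre sequence, giving $\Psi^n_{D_n X} \simeq \Psi^n_{T_n X}$; interpreting $\Psi^n_{(-)}$ as a derived functor that inverts $S_n$-equivalences, this further equals $\Psi^n_X$.

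Finally, I would invoke the calculation from the proof of the main classification theorem: for any $\Psi \in O(n)\ical\scal$, the derived composite $O(n)\ical\scal \to O(n)\ecal_n^{\pi} \to n \homog \ecal_0$ is, via Lemma \ref{lem:derivedfun} together with \cite[Examples 5.7 and 6.4]{weiss95}, the functor
\[
V \longmapsto \hocolim_k \Omega^{nk} \bigl[ EO(n)_+ \smashprod_{O(n)} (\Psi(\rr^k) \smashprod S^{nV}) \bigr] \simeq \Omega^\infty \bigl[ EO(n)_+ \smashprod_{O(n)} (\Psi \smashprod S^{nV}) \bigr].
\]
Setting $\Psi = \Psi^n_X$ and combining with the Quillen equivalence of the previous step identifies $D_n X(V)$ with the fibre term displayed in the statement of the theorem, producing the required fibration sequence. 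The main obstacle is the $n$-homogeneity claim in step two, as this is where one must control $T_{n-1}$ on the fibre; the remaining work is a careful assembly of the Quillen equivalences and the homotopy-colimit computations already quoted from Weiss.
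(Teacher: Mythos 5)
Your argument is correct and is essentially the paper's (and Weiss's Theorem 9.1's) intended route: realise the fibre of $T_n X \to T_{n-1}X$ as an $n$--homogeneous functor whose derivative is $\Psi^n_X$ via Lemma \ref{lem:fibreisnpoly}, Corollary \ref{cor:npolydiff} and Proposition \ref{prop:mpolynpoly}, and then feed this into the classification Quillen equivalence together with Lemma \ref{lem:derivedfun} and the formula quoted from \cite[Examples 5.7 and 6.4]{weiss95}. The paper itself states the theorem without spelling out a proof, citing \cite{weiss95}, and your assembly is exactly that argument.
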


We now give the picture of the tower for $X \in \ecal_0$. 
$$
\xymatrix@C=1cm@R-0.3cm{
&& \ar[d]\\
&& T_3 X \ar[d]
&\Omega^\infty [EO(3)_+ \smashprod_{O(3)} (\Psi^3_X \smashprod S^{3V})] \ar[l]
\\
&& T_2 X \ar[d]
& \Omega^\infty [EO(2)_+ \smashprod_{O(2)} (\Psi^2_X \smashprod S^{2V})] \ar[l]
\\
&& T_1 X \ar[d]
& \Omega^\infty [EO(1)_+ \smashprod_{O(1)} (\Psi^1_X \smashprod S^{1V})] \ar[l]
\\
X \ar[rr] \ar[urr] \ar[uurr] \ar[uuurr]
&& T_0 X }
$$

We have completed our task: the constructions of \cite{weiss95} 
are now realised as Quillen functors on model categories,
we have classified homogeneous functors via a Quillen equivalence, the relation between homogenous functors and $O(n)$--spectra
has been made precise
and orthogonal calculus is ready to be generalised to equivariant 
or stable settings.

\section{Stable Orthogonal Calculus}\label{sec:stablecase}
As an application, we outline a stable variant of orthogonal calculus, which
replaces based topological spaces with
orthogonal spectra.  
As $\Sigma^{\infty}$ is a symmetric monoidal functor,
\cite[Lemma II.4.8]{mm02},
 we can think of the categories $\jcal_n$ as enriched over spectra; 
in detail, we make a new category $\kcal_n$, with the same objects 
as $\jcal_n$, whose hom-objects are given by
\[
\kcal_n (U,V) = \Sigma^\infty \mor_n (U,V)
\]

We define the category $\fcal_n$ to be the category of 
$O(n) \ical \scal$--enriched functors from 
$\kcal_n$ to $O(n) \ical \scal$. Notice this is the same category as
taking  $O(n) \Top$--enriched functors
from $\jcal_n$ to $O(n) \ical \scal$.

The required adjunctions which define the notion of differentiation
are simply enriched Kan extension. Thus changing the codomain to spectra causes no additional complications.  In particular, inflation--induction   
$ \ind_m^n:O(m)\fcal_m \rightarrow O(n)\fcal_n$ is defined as
$$(\ind_m^n \CI F)(V) = \nat_{O(m)\fcal_m} (\kcal_n(V,-), \CI_m^n F)$$
where $\CI_m^n$ is given by the adjunction 
$$(-)/O(n-m) : O(n)\ical \scal
\overrightarrow{\longleftarrow}
O(m) \ical \scal : \CI_m^n $$

For fibrant $E \in \fcal_0$, define $\tau_n E \in \fcal_0$ by
$$(\tau_n E)(V) = \nat_{\fcal_0}( \Sigma^{\infty} S \gamma_{n+1}(V,-)_+, E)$$

At this point  we note that $\Sigma^{\infty}:\Top \rightarrow  \ical \scal $ is left Quillen.   In particular, this implies the alternative description of $\tau_n$ still holds for fibrant $E\in \fcal_0$:
\[
\tau_n E(V) = \underset{0 \neq U \subset \rr^{n+1}}{\holim} 
E(U \oplus V)
\]

\begin{definition}\label{def:specpoly}
A functor $E$ from $\kcal_0$ to spectra
is said to be \textbf{polynomial of degree less than or equal to $n$} if and only if
each spectrum $E(V)$ is an $\Omega$-spectrum and
$$
(\rho_n)_E \co E \to \tau_n E
$$
is a levelwise stable equivalence. This second condition is equivalent 
to asking  that for each $V$, 
$$
E(V)\rightarrow \underset{0 \neq U \subset \rr^{n+1}}{\holim} 
E(U \oplus V)
$$
is a weak equivalence of spectra.
\end{definition}

In addition, since $\Sigma^{\infty}:\Top \rightarrow  \ical \scal $ is left Quillen,  we retain the homotopy cofibre sequences below.
\[
\begin{array}{ccccc}
\kcal_n(\rr \oplus V, W) \smashprod S^n
& \longrightarrow &
\kcal_n (V,W)
& \longrightarrow &
\kcal_{n+1} (V,W) \\ 
\Sigma^{\infty} S\gamma_{n+1}(V,W)_+  
& \longrightarrow &
\kcal_0 (V,W) 
& \longrightarrow &
\kcal_{n+1}(V,W)
\end{array}
\]

In particular, note that $E \in \fcal_0$ is $n$--polynomial if and only if 
$\ind_0^{n+1} E (V)$ is $\pi_*$--isomorphic to a point, 
for each $V \in \kcal_0$. 

For $i \geqslant 0$, we build the fibrant replacement functor
$T_i$ from $\tau_i$ exactly as in definition \ref{def:tn}.

\begin{definition}\label{def:spechomog}
An object $E \in \ecal_0$ is \textbf{$n$--homogeneous} if it is
polynomial of degree at most $n$ and $T_{n-1} E$ is
weakly equivalent to the terminal object.  
\end{definition}

Since $O(n) \ical \scal$, $n \geqslant 0$,  is a spectra-enriched (or 
topologically enriched) cellular model category, we may apply our previous work 
and obtain all the necessary model structures.  Specifically, we obtain 
the $n$--polynomial and $n$--homogeneous model structures on $\fcal_0$,  
as well as the $n$--stable model structure on $O(n) \fcal_n$, in precisely the same manner.  

We thus have a tower of fibrations relating $T_n E$
to $T_{n-1} E$ for varying $n$. The utility of the tower comes from the 
identification of the homotopy fibres in terms of inflation--induction, 
so that is our next task.

\begin{theorem}
For $n \geqslant 1$, the adjoint functors
\[
\res_0^n/O(n) : O(n) \fcal_n^{\pi}
\overrightarrow{\longleftarrow}
n \homog \fcal_0
: \ind_0^n \varepsilon^*
\]
determine a Quillen equivalence.
\end{theorem}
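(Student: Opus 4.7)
The plan is to transcribe the proof of the corresponding topological theorem from Section \ref{sec:tower}, replacing $\ecal_0$ with $\fcal_0$ throughout, and verifying that each step survives the transition to orthogonal spectra. Two things must be established: that the right adjoint $\ind_0^n \varepsilon^*$ reflects weak equivalences between fibrant objects, and that the derived unit of the adjunction is an $n$-stable equivalence on cofibrant objects of $O(n) \fcal_n^{\pi}$. The first point is immediate from the construction, since the $n$-homogeneous model structure on $\fcal_0$ is built by the same right Bousfield localisation recipe: a map $f$ between fibrant objects is an $n$-homogeneous equivalence precisely when $\ind_0^n \varepsilon^* f$ is a levelwise stable equivalence in $O(n) \fcal_n$.

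For the derived unit, I would first establish the spectrum-enriched analog of Proposition \ref{prop:eqspectra}. The map $\alpha_n \co \kcal_n \to \kcal_1$, obtained by applying $\Sigma^\infty$ to the topological $\alpha_n$, induces an adjunction which should be a Quillen equivalence between $O(n)\fcal_n^{\pi}$ and a suitable category of $O(n)$-equivariant orthogonal spectrum objects. The earlier proof transfers essentially verbatim: the maps $\lambda_{V,W}^n$ remain $n\pi_*$-isomorphisms because $\Sigma^\infty$ is a strong symmetric monoidal left Quillen functor and preserves the connectivity estimate from Lemma \ref{lem:lambdaconnect}. With this intermediate equivalence in hand, the derived unit is analysed via the same commutative square as in the topological proof, reducing everything to identifying $T_n(EO(n)_+ \smashprod_{O(n)} \res_0^n \alpha_n^* \Psi)$ for an $\Omega$-spectrum $\Psi$ with $O(n)$-action, and checking that the canonical map from $\alpha_n^* \Psi$ into it is an $n$-stable equivalence.

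The hard part is exactly this computational identification, which in the topological case was imported from Weiss's Examples 6.4 and 5.7 and not reproved in the body of the paper. I would transfer those computations to the spectrum-valued setting by using that $\Sigma^\infty$ is left Quillen and commutes up to $\pi_*$-isomorphism with the homotopy colimits, loopings, and homotopy orbits involved. A pleasant feature is that $\Omega^\infty \Sigma^\infty$, which in Weiss's argument on based $O(n)$-spaces is controlled only through delicate connectivity estimates, becomes the identity up to stable equivalence on orthogonal spectra; thus the identification
\[
T_n\bigl(EO(n)_+ \smashprod_{O(n)} \res_0^n \alpha_n^* \Psi\bigr)(V) \simeq \hocolim_k \Omega^{nk}\bigl[EO(n)_+ \smashprod_{O(n)} (\Psi(\rr^k) \smashprod S^{nV})\bigr]
\]
should actually be simpler to verify in the stable setting than in the topological one. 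Once this is in place, the remainder of the argument from Section \ref{sec:tower} applies without change.
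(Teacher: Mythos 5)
Your proposal coincides with the paper's own treatment: the paper gives no separate argument for this theorem, asserting only that the polynomial, homogeneous and $n$--stable model structures and the Quillen-equivalence proof of sections \ref{sec:npolymodel}--\ref{sec:tower} apply ``in precisely the same manner'' once based spaces are replaced by orthogonal spectra, which is exactly your transcription plan, including the reduction to the stable analogues of Weiss's Examples 6.4 and 5.7. The one caution is your suggested mechanism of transferring those examples along $\Sigma^\infty$ --- a general object of $\fcal_0$ (or a general $O(n)$--equivariant bispectrum $\Psi$) is not a suspension spectrum, and $\Sigma^\infty$ does not commute with $\Omega^{nk}$ or with the homotopy limits defining $T_n$ up to stable equivalence --- but your alternative observation, that the identification is easier to verify directly in the spectrum-valued setting where $\Omega^\infty\Sigma^\infty$ disappears and homotopy orbits interact well with the finite homotopy limits involved, is the correct route and the one the paper implicitly relies on.
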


In particular, by simply replacing spectra for spaces, we obtain a zig-zag of Quillen equivalences, where  $O(n)(\ical \times \ical) \scal$ 
is the category of $O(n)$--objects in $\fcal_1$.
\[
\xymatrix@C+1cm{
n \homog \fcal_0
\ar@<+1 ex>[r]^(0.55){\ind_0^n \varepsilon^*}
&
O(n)\fcal_n^{\pi}
\ar@<+1ex>[l]^(0.45){\res_0^n/O(n)}
\ar@<+1ex>[r]^(0.42){(-)\smashprod_{\kcal_n}\kcal_1}
&
O(n)(\ical \times \ical) \scal 
\ar@<+1ex>[l]^(0.6){\alpha_n^*}
}
\]
We can easily go further by noting that this category is the 
category of $O(n)$--objects in orthogonal bi-spectra. 
Following \cite[Corollary 2.30]{jar97}, 
the ``diagonal" functor
$$d:(\ical \times \ical) \scal \rightarrow \ical \scal$$
as a right adjoint, defines an  equivalence of the associated homotopy 
categories.  In addition, the diagonal preserves the 
tensor product of a spectrum with a space, 
so the equivalence lifts to an equivalence of homotopy categories 
\[
d_*:Ho(O(n)(\ical \times \ical) \scal) 
\overset{\simeq}{\longrightarrow} Ho(O(n)\ical \scal)
\]
It should be routine to extend this to a Quillen equivalence, 
provided one takes care over the categorical foundations. 

In summary, we have the following identification of homogeneous functors.
\begin{corollary}
There is an equivalence
of homotopy categories:
$$\xymatrix@C+1.6cm{
\ho (O(n)\ical \scal)
\ar@<+1ex>[r]
&
\ho (n\homog \fcal_0)
\ar@<+1ex>[l]
}
$$

For $F \in \fcal_0$, we denote the corresponding object in the homotopy category of $O(n)\ical\scal$ as $\Phi^n_F$.
\end{corollary}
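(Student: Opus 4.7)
The plan is to produce this equivalence as the composite of three homotopy-category equivalences that are either fully established or outlined in the preceding material. The first piece is immediate: the preceding theorem gives a Quillen equivalence $\res_0^n/O(n) \dashv \ind_0^n\varepsilon^*$ between $O(n)\fcal_n^\pi$ and $n\homog\fcal_0$, which on passage to homotopy categories yields $\ho(n\homog\fcal_0) \simeq \ho(O(n)\fcal_n^\pi)$.

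The second piece is the spectrum-enriched analog of Proposition \ref{prop:eqspectra}. The original proof used only formal ingredients: $\alpha_n^*$ preserves levelwise (acyclic) fibrations and fibrations in the stable structure; a cofinality argument identifies $\pi_*$-isomorphisms with $n\pi_*$-isomorphisms after pullback along $\alpha_n$; and the derived unit is checked on the single generator $O(n)_+ \smashprod \kcal_n(0,-)$, reducing to a direct calculation since $\kcal_n(0,-) \smashprod_{\kcal_n}\kcal_1 = \kcal_1(0,-)$. Each ingredient transfers verbatim with spectra in place of spaces, so we obtain a Quillen equivalence between $O(n)\fcal_n^\pi$ and $O(n)(\ical\times\ical)\scal$, and hence an equivalence $\ho(O(n)\fcal_n^\pi) \simeq \ho(O(n)(\ical\times\ical)\scal)$.

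The third piece is the equivariant lift of Jardine's diagonal equivalence \cite[Corollary 2.30]{jar97}, giving $\ho(O(n)(\ical\times\ical)\scal) \simeq \ho(O(n)\ical\scal)$. Composing the three equivalences produces the desired result, and under the composite a homogeneous $F\in\fcal_0$ is sent to the spectrum $\Phi^n_F$. The hard part will be this third step, since \cite{jar97} works non-equivariantly: although the diagonal functor $d$ is visibly $O(n)$-equivariant as it acts only on the indexing, one must check that the adjoint pair $d^* \dashv d$ restricts sensibly to $O(n)$-equivariant objects and that the derived unit and counit remain equivalences after retaining $O(n)$-structure. Because our model structure on $O(n)$-equivariant orthogonal spectra detects weak equivalences via the underlying non-equivariant spectra, this should reduce to the non-equivariant statement, which is precisely what the authors' remark about ``care over the categorical foundations'' is flagging.
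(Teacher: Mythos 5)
Your proposal matches the paper's own route: the corollary is obtained by composing the Quillen equivalence of the preceding theorem, the spectrum-enriched analogue of Proposition \ref{prop:eqspectra} relating $O(n)\fcal_n^{\pi}$ to $O(n)(\ical\times\ical)\scal$, and the equivariant lift of Jardine's diagonal equivalence, with the same acknowledgement that the last step is only an equivalence of homotopy categories pending further categorical care. This is essentially identical to the argument in the paper, including your observation that the coarse (underlying non-equivariant) weak equivalences make the equivariant lifting of the diagonal reduce to the non-equivariant statement.
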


Having identified the fibres via a Quillen equivalence and in terms of orthogonal spectra, we have obtained the desired stable variant of orthogonal calculus.

\begin{theorem}
A functor $F \in \fcal_0$ determines a tower of fibrations: 
$$
\xymatrix@C=1cm@R-0.3cm{
&& \ar[d]\\
&& T_3 F \ar[d]
&  [EO(3)_+ \smashprod_{O(3)} (\Phi^3_F \smashprod S^{3V})]
\ar[l]
\\
&& T_2 F \ar[d]
&  [EO(2)_+ \smashprod_{O(2)} (\Phi^2_F \smashprod S^{2V})]
\ar[l]
\\
&& T_1 F \ar[d]
&[EO(1)_+ \smashprod_{O(1)} (\Phi^1_F \smashprod S^{1V})]
\ar[l]
\\
F \ar[rr] \ar[urr] \ar[uurr] \ar[uuurr]
&& T_0 F }
$$
\end{theorem}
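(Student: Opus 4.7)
The strategy is to mirror the argument for the unstable case (Theorem \ref{thm:fibre}) almost verbatim, with the simplification that working in spectra removes the $\Omega^\infty \Sigma^\infty$--style connectivity bookkeeping. First, I build the tower. By Definition \ref{def:specpoly} and the cofibre sequence $\Sigma^\infty S\gamma_{n+1}(V,W)_+ \to \kcal_0(V,W) \to \kcal_{n+1}(V,W)$, which is again available in the spectral setting, the argument of Proposition \ref{prop:mpolynpoly} goes through: any $(n-1)$--polynomial functor in $\fcal_0$ is $n$--polynomial. Consequently $T_{n-1} F$ is fibrant in $n \poly \fcal_0$ and the natural map $F \to T_{n-1} F$ factors through a canonical map $p_n \co T_n F \to T_{n-1} F$.

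Next, I identify the fibre of $p_n$ as $n$--homogeneous. The stable analogues of Lemma \ref{lem:fibreisnpoly} and Lemma \ref{lem:taumEisnpoly} hold (both reduce to the fact that $\ind_0^{n+1}$ of $T_{n-1} F$ vanishes together with stability of $\pi_*$--equivalences under homotopy pullback), so the homotopy fibre $D_n F$ of $p_n$ is $n$--polynomial. By construction $T_{n-1} D_n F$ is contractible, since $T_{n-1}$ commutes with the relevant homotopy limits and $T_{n-1} p_n$ is a stable equivalence. Thus $D_n F$ is $n$--homogeneous in the sense of Definition \ref{def:spechomog}.

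Now I feed $D_n F$ through the zig-zag of Quillen equivalences
\[
\xymatrix@C+0.5cm{
n \homog \fcal_0
\ar@<+1 ex>[r]^(0.55){\ind_0^n \varepsilon^*}
&
O(n)\fcal_n^{\pi}
\ar@<+1ex>[l]^(0.45){\res_0^n/O(n)}
\ar@<+1ex>[r]^(0.42){(-)\smashprod_{\kcal_n}\kcal_1}
&
O(n)(\ical \times \ical) \scal
\ar@<+1ex>[l]^(0.6){\alpha_n^*}
}
\]
composed with the diagonal equivalence to $O(n) \ical \scal$, defining $\Phi^n_F$ as the image of $D_n F$. The counit of this equivalence, combined with the spectral analogue of Lemma \ref{lem:derivedfun}, gives an objectwise equivalence
\[
D_n F(V) \simeq EO(n)_+ \smashprod_{O(n)} \res_0^n \alpha_n^* \Psi
\]
for $\Psi$ a fibrant replacement of the image of $\cofrep D_n F$ in $O(n) \ical \scal$. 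Unpacking $\alpha_n^*$ gives $(\alpha_n^* \Psi)(V) = \Psi(nV)$, and in the stable setting this is already equivalent to $\Psi \smashprod S^{nV}$ (with diagonal $O(n)$--action) up to the usual stabilisation, so the fibre takes the desired form $EO(n)_+ \smashprod_{O(n)} (\Phi^n_F \smashprod S^{nV})$.

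The main obstacle is this final identification: in the unstable case the authors explicitly cite \cite[Examples 5.7 and 6.4]{weiss95} and note they have not been able to improve on them. In the stable setting the corresponding computation should be cleaner because $\Sigma^\infty$ is already baked into $\kcal_n$, so $\alpha_n^* \Psi$ is directly a functor valued in spectra and the ``$\Omega^\infty$ around a homotopy colimit of loopings" gymnastics collapses into the stabilisation built into the target category. Nonetheless, verifying that the diagonal Quillen equivalence $d_*$ interacts correctly with the diagonal $O(n)$--action on $\Phi^n_F \smashprod S^{nV}$, and that this matches the derived counit of $\alpha_n^* \dashv (\alpha_n)_!$, is the technical heart of the argument.
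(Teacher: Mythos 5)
Your plan is correct and follows essentially the same route as the paper: the paper gives no separate proof of this theorem, presenting it as the culmination of Section \ref{sec:stablecase}, where the unstable arguments (the tower from the polynomial approximations, the identification of the fibre as $n$--homogeneous, the Quillen equivalence with $O(n)\fcal_n^{\pi}$, and the zig-zag through bi-spectra followed by the diagonal) are rerun with spectra in place of spaces, exactly as you outline. Your remark that the delicate point is the compatibility of the diagonal equivalence with the smash products and $O(n)$--actions in the final identification of the fibre matches the paper's own caveat that upgrading $d_*$ to a Quillen equivalence should be routine ``provided one takes care over the categorical foundations''.
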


One can now study interactions between orthogonal calculus and stable homotopy theory, particularly interactions with various localizations of spectra, which the authors believe to be intractable without the current work. 
For example, let $X$ be an object of $\fcal_0$ such that $T_0 X$ is objectwise contractible. 
If we look at the derivatives of $X$ rationally, then we know by 
\cite{greshifree} that the $n^{th}$ derivative is given by 
a torsion module over the twisted group ring $\text{H}^*(\text{B}SO(n))[C_2]$.
%Thus the rational tower is essentially determined by algebraic data. 
%Indeed one could study the rational orthogonal tower in purely algebraic categories. 

\bibliography{orthorewrite}

\begin{thebibliography}{MMSS01}

\bibitem[ALV07]{ARV07}
Gregory Arone, Pascal Lambrechts, and Ismar Voli{\'c}.
\newblock Calculus of functors, operad formality, and rational homology of
  embedding spaces.
\newblock {\em Acta Math.}, 199(2):153--198, 2007.

\bibitem[BCR07]{bcr}
Georg Biedermann, Boris Chorny, and Oliver R{\"o}ndigs.
\newblock Calculus of functors and model categories.
\newblock {\em Adv. Math.}, 214(1):92--115, 2007.

\bibitem[BF78]{bf78}
A.~K. Bousfield and E.~M. Friedlander.
\newblock Homotopy theory of {$\Gamma $}-spaces, spectra, and bisimplicial
  sets.
\newblock In {\em Geometric applications of homotopy theory ({P}roc. {C}onf.,
  {E}vanston, {I}ll., 1977), {II}}, volume 658 of {\em Lecture Notes in Math.},
  pages 80--130. Springer, Berlin, 1978.

\bibitem[Bor94]{bor94}
F.~Borceux.
\newblock {\em Handbook of categorical algebra. 2}, volume~51 of {\em
  Encyclopedia of Mathematics and its Applications}.
\newblock Cambridge University Press, Cambridge, 1994.
\newblock Categories and structures.

\bibitem[Bou01]{bous01}
A.~K. Bousfield.
\newblock On the telescopic homotopy theory of spaces.
\newblock {\em Trans. Amer. Math. Soc.}, 353(6):2391--2426 (electronic), 2001.

\bibitem[Goo90]{gw90}
T.~Goodwillie.
\newblock Calculus {I}: The first derivative of pseudoisotopy theory.
\newblock {\em K-Theory}, 4(1):1--27, 1990.

\bibitem[Goo03]{gw03}
T.~Goodwillie.
\newblock Calculus {III}: {T}aylor {S}eries.
\newblock {\em Geom. Topol.}, 7:645--711, 2003.

\bibitem[Goo92]{gw91}
T.~Goodwillie.
\newblock Calculus {II}: {A}nalytic functors.
\newblock {\em K-Theory}, 5(4):295-- 332, 1991/92.

\bibitem[GS11]{greshifree}
J.~P.~C. Greenlees and B.~Shipley.
\newblock An algebraic model for free rational {G}-spectra.
\newblock {\tt arXiv: 1101:4818v1[math.AT]}, 2011.

\bibitem[Hir03]{hir03}
P.~S. Hirschhorn.
\newblock {\em Model categories and their localizations}, volume~99 of {\em
  Mathematical Surveys and Monographs}.
\newblock American Mathematical Society, Providence, RI, 2003.

\bibitem[Hov99]{hov99}
M.~Hovey.
\newblock {\em Model categories}, volume~63 of {\em Mathematical Surveys and
  Monographs}.
\newblock American Mathematical Society, Providence, RI, 1999.

\bibitem[Jar97]{jar97}
J.~F. Jardine.
\newblock {\em Generalized \'etale cohomology theories}, volume 146 of {\em
  Progress in Mathematics}.
\newblock Birkh\"auser Verlag, Basel, 1997.

\bibitem[Lin10]{lind}
J.~Lind.
\newblock Diagram spaces, diagram spectra, and spectra of units.
\newblock {\tt arXiv: 0908:1092v2[math.AT]}, 2010.

\bibitem[MM02]{mm02}
M.~A. Mandell and J.~P. May.
\newblock Equivariant orthogonal spectra and {$S$}-modules.
\newblock {\em Mem. Amer. Math. Soc.}, 159(755):x+108, 2002.

\bibitem[MMSS01]{mmss01}
M.~A. Mandell, J.~P. May, S.~Schwede, and B.~Shipley.
\newblock Model categories of diagram spectra.
\newblock {\em Proc. London Math. Soc. (3)}, 82(2):441--512, 2001.

\bibitem[MS06]{ms06}
J.~P. May and J.~Sigurdsson.
\newblock {\em Parametrized homotopy theory}, volume 132 of {\em Mathematical
  Surveys and Monographs}.
\newblock American Mathematical Society, Providence, RI, 2006.

\bibitem[Wei95]{weiss95}
M.~Weiss.
\newblock Orthogonal calculus.
\newblock {\em Trans. Amer. Math. Soc.}, 347(10):3743--3796, 1995.

\bibitem[Wei98]{weisserrata}
M.~Weiss.
\newblock Erratum: ``{O}rthogonal calculus'' [{T}rans.\ {A}mer.\ {M}ath.\
  {S}oc.\ {347}(10):3743--3796, 1995 ].
\newblock {\em Trans. Amer. Math. Soc.}, 350(2):851--855, 1998.

\end{thebibliography}
\bibliographystyle{alpha}

\begin{tabular}{l}
David Barnes \\
School of Mathematics and Statistics \\
Hicks Building \\ 
Sheffield S3 7RH, UK \\
d.j.barnes@shef.ac.uk \\
\\
\\
Peter Oman \\
Department of Mathematics \\
Southeast Missouri State University \\
Cape Girardeau, MO 63701, USA \\
poman@semo.edu
\end{tabular}

\end{document}